\tikzset{join/.code=\tikzset{after node path={%
\ifx\tikzchainprevious\pgfutil@empty\else(\tikzchainprevious)%
edge[every join]#1(\tikzchaincurrent)\fi}}}
\tikzset{>=stealth',every on chain/.append style={join},
         every join/.style={->}}
\theoremstyle{plain}                       
\newtheorem{theorem}{Theorem}[chapter]    
\newtheorem*{main}{Main Problem}      
\newtheorem{prop}[theorem]{Proposition}	
\theoremstyle{definition}    
\newtheorem{definition}[theorem]{Definition}
\newtheorem{example}[theorem]{Example}
\newtheorem{remark}[theorem]{Remark}      
\newtheorem{corollary}[theorem]{Corollary}
\newcommand{\field}[1]{\mathbb{#1}} 
\newcommand{\R}{\field{R}} 
\newcommand{\N}{\field{N}} 
\newcommand{\C}{\field{C}} 
\newcommand{\Z}{\field{Z}} 
\newcommand{\mP}{\mathcal{P}}
\newcommand{\mI}{\mathcal{I}}
\newcommand{\mQ}{\mathcal{Q}}
\newcommand{\mE}{\mathcal{E}}
\newcommand{\mAss}{\mathcal{A}ss}
\newcommand{\mCom}{\mathcal{C}om}
\newcommand{\mComm}{\mathcal{C}om}
\newcommand{\mLie}{\mathcal{L}ie}
\newcommand{\mPoiss}{\mathcal{P}oiss}
\newcommand{\wh}{^\circlearrowright}
\newcommand{\mAssw}{\mathcal{A}ss\wh}
\newcommand{\mComw}{\mathcal{C}om\wh}
\newcommand{\mCommw}{\mathcal{C}om\wh}
\newcommand{\mLiew}{\mathcal{L}ie\wh}
\newcommand{\mPoissw}{\mathcal{P}oiss\wh}
\newcommand{\tree}{\begin{tikzpicture}
\draw (0,0.25) -- (0,0);
\draw (0,0) -- (-0.2,-0.2);
\draw (0,0) -- (0.2,-0.2);
\end{tikzpicture}}
\newcommand{\comtree}{\begin{tikzpicture}
\draw (0,0.25) -- (0,0);
\draw (0,0) -- (-0.2,-0.2);
\draw (0,0) -- (0.2,-0.2);
\draw (0,0) circle (1pt) [fill=white];
\end{tikzpicture}}
\newcommand{\commtree}{\begin{tikzpicture}
\draw (0,0.25) -- (0,0);
\draw (0,0) -- (-0.2,-0.2);
\draw (0,0) -- (0.2,-0.2);
\draw (0,0) circle (1pt) [fill=white];
\end{tikzpicture}}
\newcommand{\twisttree}{\begin{tikzpicture}
\draw (0,0.25) -- (0,0);
\draw (0,0) -- (-0.06,-0.06) -- (-0.02,-0.1);
\draw (0,0) -- (0.06,-0.06) -- (-0.08,-0.2);
\draw (0.02,-0.14) -- (0.08,-0.2);
\end{tikzpicture}}
\newcommand{\sigmaltree}{\begin{tikzpicture}
\draw (0,0.25) -- (0,0);
\draw (0,0) -- (-0.2,-0.2);
\draw (0,0) -- (0.2,-0.2) node [below right=-3pt and -8pt] {\tiny{$\sigma(3)$}};
\draw (-0.2,-0.2) -- (-0.4,-0.4) node [below left=-2pt and -8pt] {\tiny{$\sigma(1)$}};
\draw (-0.2,-0.2) -- (0,-0.4) node [below right=-2pt and -8pt] {\tiny{$\sigma(2)$}};
\end{tikzpicture}}
\newcommand{\sigmartree}{\begin{tikzpicture}
\draw (0,0.25) -- (0,0);
\draw (0,0) -- (-0.2,-0.2) node [below left=-3pt and -8pt] {\tiny{$\sigma(1)$}};
\draw (0,0) -- (0.2,-0.2);
\draw (0.2,-0.2) -- (0,-0.4) node [below left=-2pt and -8pt] {\tiny{$\sigma(2)$}};
\draw (0.2,-0.2) -- (0.4,-0.4) node [below right=-2pt and -8pt] {\tiny{$\sigma(3)$}};
\end{tikzpicture}}
\newcommand{\afkorting}[2]{afkorting, waarin we het eerste
                           argument #1 en het tweede argument #2 kunnen varieren}
\renewcommand{\vec}[1]{\boldsymbol{#1}}
\newcommand{\convolution}{\ast} 
\DeclareMathOperator{\diag}{diag}    
\DeclareMathOperator{\beeld}{Im}
\newcommand{\RefVgl}[1]{~\textup{(\ref{#1})}} 
\newcommand{\RefFig}[1]{Figuur~\ref{#1}}
\newcommand{\RefTab}[1]{Tabel~\ref{#1}}
\newcommand{\RefSec}[1]{Sectie~\ref{#1}}
\newcommand{\RefHfdst}[1]{Hoofdstuk~\ref{#1}}
\newcommand{\RefStel}[1]{Stelling~\ref{#1}}
\newcommand{\RefLem}[1]{Lemma~\ref{#1}}
\newcommand{\RefGev}[1]{Gevolg~\ref{#1}}
\newcommand{\RefDef}[1]{Definitie~\ref{#1}}
\newcommand{\RefVb}[1]{Voorbeeld~\ref{#1}}
\newcommand{\RefAlg}[1]{Algoritme~\ref{#1}}
\renewcommand{\title}{On Cohomology of Graph Complexes}
\newcommand{\authorA}{Simen Bruinsma}
\newcommand{\mailadresA}{simen.bruinsma@student.uva.nl}
\newcommand{\studnummerA}{0513644}
\newcommand{\begeleider}{dr. Sergey Shadrin}
\newcommand{\tweedebeoordelaar}{dr. Hessel Posthuma}
\newcommand{\datum}{28 augustus, 2010}
\newcommand{\content}{Bachelorscriptie}
\newcommand{\watisdit}{Bachelorscriptie}
\begin{document}

\begin{titlepage}
\null\vfill
\begin{center}
\normalfont {\LARGE \title\par}\bigskip {\Large \authorA
\par
}\bigskip {\Large \datum\par}\end{center} \vfill \vfill
\begin{center}
{\Large \watisdit}\par\bigskip {Begeleiding:
\begeleider}\par\medskip
\end{center}
\vfill
\begin{center}
\end{center}
\vfill
\begin{center}
\leavevmode\normalfont Korteweg de Vries Instituut voor Wiskunde\par\smallskip
Faculteit der Natuurwetenschappen, Wiskunde en Informatica\par\smallskip 
Universiteit van Amsterdam\par\smallskip\medskip
\end{center}
\null
\end{titlepage}

\null\vfill \noindent
\begin{minipage}{0.85\hsize}
\textbf{\Large Abstract}\par\smallskip\noindent
In this thesis we prove that the wheeled Poisson operad is not a wheeled Koszul operad. Chapter 1 introduces operads, the subclass of quadratic operads, and their Koszulness. In chapter 2, all these notions are extended to the case of wheeled operads, and we pose the main question. In chapter 3, some techniques for computing cohomology are introduced, and the first cycle that obstructs the Koszulness of the wheeled Poisson operad is found. 
\end{minipage}

\vfill \vfill \vfill \vfill \noindent
\textbf{\Large Gegevens}\par\smallskip\noindent
Titel: \title\\
Auteur: \authorA, \mailadresA, \studnummerA\\
Begeleider: \begeleider\\
Tweede beoordelaar: \tweedebeoordelaar\\
Einddatum: \datum\par
\bigskip\noindent
Korteweg de Vries Instituut voor Wiskunde\\
Universiteit van Amsterdam\\
Science Park 904, 1098 XH Amsterdam\\
\url{http://www.science.uva.nl/math}{}

\tableofcontents

\chapter*{Introduction}
In the fall of 2008, dr. Sergey Shadrin gave me three articles that he felt could be the basis of a bachelor thesis. Because of the high level of these articles, it was decided that I would focus on the easiest subject: wheeled operads and their Koszulness. In the months thereafter, we followed a path remarkably similar to the one in this thesis: I was first introduced to the concept of operads, and how to visualize their elements as trees. I learned about the free operad, quadratic operads and the Cobar complex, and then about the concept of Koszulness. Then, all these notions were extended to the case of wheeled operads, and the main question was posed: is the wheeled completion of an ordinary Koszul operad always a wheeled Koszul operad? Lastly, I got to calculating, and found the first cycle that breaks the Koszulness of the wheeled Poisson operad. 

I'd like to thank my supervisor, dr. Sergey Shadrin, for introducing me to this very interesting subject, and for his patience, both in explaining the subject and in his persisting help, all the way to the end of this project.

\chapter{Operads}

\section{Operads and trees}

\begin{definition} An \textbf{operad} $\mP$ over a field $k$ is a collection $\{ \mP (n)|n\geq 1 \}$ of vector spaces over $k$, with the following extra structure: 
\begin{enumerate}
	\item[i)] An action of the symmetric group $S_n$ on each $\mP(n)$; 
	\item[ii)] For all $n\in\N$, and all $m_1,...,m_n\in\N$, a linear map
$$\gamma_{m_1,...,m_n}:\mP(n)\otimes\mP(m_1)\otimes ... \otimes\mP(m_n) \rightarrow \mP(m_1+...+m_n)$$
We call $\gamma_{m_1,...,m_n}$ a \textbf{composition}, and we write $\nu(\mu_1,...,\mu_n)$ for $\gamma_{m_1,...,m_n}(\nu,\mu_1,...,\mu_n)$
	\item[iii)] A unit $1\in\mP(1)$, such that $\nu(1,...,1)=1(\nu)=\nu$ for any $\nu\in\mP(n)$, and for any $n$.
\end{enumerate}
We demand that the composition is associative and equivariant with respect to the action of the symmetric group. Both of these notions will be described in the language of trees, which we define below. 
\end{definition}

A natural way to think about operads, and their composition, is in terms of directed trees. We use the following definition here:

\begin{definition} A \textbf{directed tree} is a connected, directed graph without cycles, such that every vertex has at least one incoming edge, and exactly one outgoing edge.
\end{definition}

From now on, we will call these directed trees \textbf{trees}. Using the language of trees, we can graph each element $\nu \in \mP(n)$ of an operad $\mP$ as a tree with one vertex, one outgoing edge, and $n$ incoming edges:
\begin{center}
\begin{tikzpicture}
\draw (0,0.4) -- (0,0) node [right=1pt] {\small{$\nu$}};
\draw (0,0) -- (-0.45,-0.3);
\draw (0,0) -- (-0.25,-0.3);
\draw (-0.15,-0.3) -- (0.35,-0.3) [dotted];
\draw (0,0) -- (0.45,-0.3);
\end{tikzpicture}
\end{center}
(we use the convention that all trees are directed upwards) \\
We view composition as contraction of the internal edges of a tree: 
\begin{center}
\begin{tikzpicture}
\draw (0,0.4) -- (0,0) node [right=1pt] {\small{$\nu$}};
\draw (0,0) -- (-1.20,-0.80) node [right=-0pt] {\small{$\mu_1$}};
\draw (0,0) -- (-0.40,-0.80) node [right=-2pt] {\small{$\mu_2$}};
\draw (0.10,-0.80) -- (1.15,-0.80) [dotted];
\draw (0,0) -- (1.20,-0.80) node [right=-2pt] {\small{$\mu_n$}};
\draw (-1.20,-0.80) -- (-1.45,-1.20);
\draw (-1.20,-0.80) -- (-0.95,-1.20);
\draw (-1.35,-1.20) -- (-1.05,-1.20) [dotted];
\draw (-0.40,-0.80) -- (-0.65,-1.20);
\draw (-0.40,-0.80) -- (-0.15,-1.20);
\draw (-0.55,-1.20) -- (-0.25,-1.20) [dotted];
\draw (1.20,-0.80) -- (0.95,-1.20);
\draw (1.20,-0.80) -- (1.45,-1.20);
\draw (1.05,-1.20) -- (1.35,-1.20) [dotted];
\end{tikzpicture} 
$\stackrel{\gamma}{\longrightarrow}$
\begin{tikzpicture} [scale=1.5]
\draw (0,0.4) -- (0,0) node [right=1pt] {\small{$\nu(\mu_1,...,\mu_n)$}};
\draw (0,0) -- (-0.45,-0.3);
\draw (0,0) -- (-0.25,-0.3);
\draw (-0.15,-0.3) -- (0.35,-0.3) [dotted];
\draw (0,0) -- (0.45,-0.3);
\end{tikzpicture}
\end{center}
We think of the unit as a tree with one vertex, one incoming edge, and one outgoing edge:
\begin{center}
\begin{tikzpicture}
\draw (0,0) -- (0,0.4) node [right=-1pt] {\footnotesize{1}};
\draw (0,0.8) -- (0,0.4) node [shape=circle,draw,fill=black,inner sep=0pt,minimum size=1pt] {};
\end{tikzpicture}
\end{center}

So, an arbitrary composition of elements from operads can be thought of as a contraction of a tree to its basic form, \begin{tikzpicture}
\draw (0,0.25) -- (0,-0.00);
\draw (0,0) -- (-0.3,-0.2);
\draw (0,0) -- (-0.15,-0.2);
\draw (-0.10,-0.2) -- (0.25,-0.2) [dotted];
\draw (0,0) -- (0.3,-0.2);
\end{tikzpicture}. 
We will call the incoming edges of a tree or a vertex its inputs, and its outgoing edge its output. Note that the number of inputs and outputs doesn't change when contracting internal edges. 
Because in general, $\nu(\mu_1,\mu_2)\neq\nu(\mu_2,\mu_1)$, the order of inputs of a tree matters. This means that our trees are planar. We will either label the inputs of a tree, or we will assume that the tree is labeled canonnically: 
$$^{^{^{\begin{tikzpicture}
\draw (0,0.4) -- (0,0) node [right=1pt] {\small{}};
\draw (0,0) -- (-0.45,-0.3);
\draw (0,0) -- (-0.25,-0.3);
\draw (-0.15,-0.3) -- (0.35,-0.3) [dotted];
\draw (0,0) -- (0.45,-0.3);
\draw (0.9,0) node {=};
\end{tikzpicture}}}} {\begin{tikzpicture}
\draw (0,0.4) -- (0,0) node [right=1pt] {\small{}};
\draw (0,0) -- (-0.45,-0.3) node [below=-1pt] {\tiny{1}};
\draw (0,0) -- (-0.25,-0.3) node [below=-1pt] {\tiny{2}};
\draw (-0.15,-0.3) -- (0.35,-0.3) [dotted];
\draw (0,0) -- (0.45,-0.3) node [below=0pt] {\tiny{$n$}};
\end{tikzpicture}}$$

We can now express the associativity and equivariance conditions. For the associativity, we note that there are multiple ways to order the contracting of edges of a tree, and the associativity condition prescribes that this order of contracting doesn't matter. 
The equivariance tells us that the group action and composition behave ``nicely" with each other: the $\sigma$ in $(\sigma\circ \nu) (\mu_1, ... , \mu_n)$ permutes the ``incoming" elements $\mu_i$, and they ``take along" their inputs. So for example, 
$$ \begin{tikzpicture}
\draw (0,0.4) -- (0,0) node [right=1pt] {\small{$(123)\circ\nu$}};
\draw (0,0) -- (-0.8,-0.8) node [right] {\small{$\mu_1$}};
\draw (0,0) -- (0,-0.8) node [right] {\small{$\mu_2$}};
\draw (0,0) -- (0.8,-0.8) node [right] {\small{$\mu_3$}};
\draw (-0.8,-0.8) -- (-1.0,-1.1);
\draw (-0.8,-0.8) -- (-0.6,-1.1);
\draw (0,-0.8) -- (-0.2,-1.1);
\draw (0,-0.8) -- (0.2,-1.1);
\draw (0.8,-0.8) -- (0.5,-1.1);
\draw (0.8,-0.8) -- (0.8,-1.1);
\draw (0.8,-0.8) -- (1.1,-1.1);
\end{tikzpicture} \stackrel{\gamma}{\longrightarrow} \begin{tikzpicture}
\draw (0,0.4) -- (0,0) node [right=1pt] {\small{$\nu(\mu_2,\mu_3,\mu_1)$}};
\draw (0,0) -- (-0.75,-0.5) node [below] {\tiny{3}};
\draw (0,0) -- (-0.5,-0.5) node [below] {\tiny{4}};
\draw (0,0) -- (-0.25,-0.5) node [below] {\tiny{5}};
\draw (0,0) -- (0,-0.5) node [below] {\tiny{6}};
\draw (0,0) -- (0.25,-0.5) node [below] {\tiny{7}};
\draw (0,0) -- (0.5,-0.5) node [below] {\tiny{1}};
\draw (0,0) -- (0.75,-0.5) node [below] {\tiny{2}};
\end{tikzpicture} $$
Remark that if we say $\mu_i = 1$ for all $i$, we see that
$$ \sigma \circ \begin{tikzpicture}
\draw (0,0.4) -- (0,0) node [right=1pt] {\small{$\nu$}};
\draw (0,0) -- (-0.45,-0.3) node [below=-1pt] {\tiny{1}};
\draw (0,0) -- (-0.25,-0.3) node [below=-1pt] {\tiny{2}};
\draw (-0.15,-0.3) -- (0.35,-0.3) [dotted];
\draw (0,0) -- (0.45,-0.3) node [below=0pt] {\tiny{$n$}};
\draw (-0.1,-0.5) -- (0.35,-0.5) [dotted];
\end{tikzpicture} = \begin{tikzpicture}
\draw (0,0.4) -- (0,0) node [right=1pt] {\small{$\nu$}};
\draw (0,0) -- (-0.45,-0.3) node [below=-1pt] {\tiny{$\sigma(1)$}};
\draw (0,0) -- (-0.25,-0.3) node [below=-1pt] {};
\draw (-0.15,-0.3) -- (0.35,-0.3) [dotted];
\draw (0,0) -- (0.45,-0.3) node [below=-1pt] {\tiny{$\sigma(n)$}};
\draw (-0.15,-0.5) -- (0.15,-0.5) [dotted];
\end{tikzpicture} $$

\begin{remark} With the demanded structure, $\mP(1)$ becomes an associative algebra with unit. And in this way, every associative algebra with unit $K$ is an operad $\{\mP(1)=K,\mP(n)=0$ if $n\geq 2\}$.
\end{remark}

\begin{remark} Because of the associativity condition and the existence of $1\in\mP(1)$, we can break our defined composition down into ``smaller", more elementary compositions. For $i \in \{1,...,n\}$, we define 
\begin{align*}
\circ_i: \ \mP(n)\otimes\mP(m) &\rightarrow \mP(n+m-1) \\
(\nu,\mu) \ \ \ \ \ \ &\mapsto \nu(1,...,\mu,...,1)
\end{align*}
with $\mu$ on the $i$th position. We write $\nu\circ_i\mu$ instead of $\circ_i(\nu,\mu)$. 
Now, our original composition can be written in terms of these more basic compositions: 
$$\nu(\mu_1,...,\mu_n)=\nu(1,\mu_2,...,\mu_n)\circ_1 \mu_1=...=(...((\nu\circ_n\mu_n)\circ_{n-1}\mu_{n-1})...)\circ_1\mu_1$$
In terms of trees, this new composition is the contraction of one edge: 
\begin{center}
\begin{tikzpicture} [scale=1.5]
\draw (0,0.25) -- (0,0) node [above right= -4pt and -2pt] {\small{$\nu$}};
\draw (0,0) -- (-0.3,-0.2);
\draw (0,0) -- (-0.15,-0.2);
\draw (0,0) -- (0.2,-0.4) node [above right= -4pt and -2pt] {\small{$\mu$}};
\path (0.125,-0.25) node [left=-2pt] {\tiny{$i$}};
\draw (-0.075,-0.15) -- (0.1875,-0.15) [dotted];
\draw (0,0) -- (0.3,-0.2);
\draw (0.2,-0.4) -- (-0.1,-0.6);
\draw (0.2,-0.4) -- (0.05,-0.6);
\draw (0.125,-0.55) -- (0.3875,-0.55) [dotted];
\draw (0.2,-0.4) -- (0.5,-0.6);
\end{tikzpicture} {$\stackrel{\circ_i}{\longrightarrow}$} \begin{tikzpicture}
\draw (0,0.4) -- (0,0) node [right=1pt] {\small{$\nu \circ_i \mu$}};
\draw (0,0) -- (-0.45,-0.3);
\draw (0,0) -- (-0.25,-0.3);
\draw [scale=0.75](-0.15,-0.3) -- (0.35,-0.3) [dotted];
\draw (0,0) -- (0.45,-0.3);
\end{tikzpicture}
\end{center}
\end{remark}

\begin{definition} A \textbf{homomorphism} of operads $\zeta: \mP \rightarrow \mQ$ is a series of linear maps $\zeta_n: \mP(n) \rightarrow \mQ(n)$ that maps the unit of $\mP$ to the unit of $\mQ$, and commutes with the composition and the action of $S_n$: 
\begin{enumerate}
  \item[i)] $\zeta_1(1)=1$
	\item[ii)] $\zeta_{n+m-1} (\nu \circ_i \mu) = \zeta_n (\nu) \circ_i \zeta_m (\mu)$
	\item[iii)] $\zeta_n (\sigma \circ \nu) = \sigma \circ \zeta_n (\nu)$
\end{enumerate}
\end{definition}

\begin{example} A natural example of an operad is $\mE_V$, the \textbf{operad of endomorphisms} of a finite-dimensional vector space $V$, with $\mE_V(n) := Hom(V^{\otimes n},V)$. 
The action of $S_n$ is the natural permutation of inputs: $(\sigma(\nu))(v_1,...,v_n):=\nu(v_{\sigma(1)},...,v_{\sigma(n)})$, with $\sigma\in S_n$, $\nu\in\mE_V(n)$, $v_i\in V$. 
The composition is the natural composition of maps:
$$(\nu(\mu_1,...,\mu_n))(v_1,...,v_{m_1+...+m_n}) =
\nu(\mu_1(v_1,...,v_{m_1}),...,\mu_n(v_{m_1+...+m_{n-1}+1},...,v_{m_1+...+m_n}))$$
with $\nu\in\mE_V(n)$, $\mu_i\in\mE_V(m_i)$, $v_i\in V$. 
The unit is, of course, the unit map $1\in\mE_V(1)=Hom(V,V)$ 
\end{example}

\begin{definition} A \textbf{representation} of an operad $\mP$ is a homomorphism $\zeta: \mP \rightarrow \mE_V$
\end{definition}

\section{Quadratic operads}

\subsection{Definitions}

From now on, we will assume that in any operad, $\mP(1)$ is the basic field $k$, with $char(k)=0$, and that every vector space mentioned is finite dimensional over this field. This means that composition with an element from $\mP (1)$ is simply scalar multiplication, and we can remove all vertices with only one input from our trees: 
$$ \begin{tikzpicture}
\draw (0,0.4) -- (0,0);
\draw (0,0) -- (-0.3,-0.3) node [right=-1pt] {$\lambda$};
\draw (0,0) -- (0.3,-0.3);
\draw (-0.6,-0.6) -- (-0.3,-0.3) node [shape=circle,draw,fill=black,inner sep=0pt,minimum size=1pt] {};
\draw (-0.6,-0.6) -- (-0.9,-0.9);
\draw (-0.6,-0.6) -- (-0.3,-0.9);
\end{tikzpicture} = \lambda \cdot \begin{tikzpicture}
\draw (0,0.4) -- (0,0);
\draw (0,0) -- (-0.3,-0.3);
\draw (0,0) -- (0.3,-0.3);
\draw (-0.3,-0.3) -- (-0.6,-0.6);
\draw (-0.3,-0.3) -- (0,-0.6);
\end{tikzpicture}$$
Where $\lambda \in k = \mP(1)$. We call these trees, where each vertex has at least two inputs, \textbf{reduced trees}.

Using reduced trees, we can create a free operad from basis elements: 
We start with a collection of $S_n$-modules (vector spaces with an action of $S_n$) $E=\{E(n) | n \geq 2\}$, with which we can ``decorate" a planar reduced tree: 
$$ \begin{tikzpicture}
\draw (0,0.6) -- (0,0) node [right=1pt] {\small{$e_4$}};
\draw (0,0) -- (-0.75,-0.5);
\draw (0,0) -- (-0.35,-0.7) node [right=1pt] {\small{$e_2$}};
\draw (0,0) -- (0.25,-0.5);
\draw (0,0) -- (0.75,-0.5);
\draw (-0.35,-0.7) -- (-0.75,-1.1);
\draw (-0.35,-0.7) -- (0.15,-1.2) node [right=1pt] {\small{$e_3$}};
\draw (0.15,-1.2) -- (-0.25,-1.6);
\draw (0.15,-1.2) -- (0.15,-1.6);
\draw (0.15,-1.2) -- (0.55,-1.6);
\end{tikzpicture} $$ 
Here, of course, $e_2 \in E(2)$, $e_3 \in E(3)$ and $e_4 \in E(4)$. These decorated trees form the basis of our free operad: 

\begin{definition} The \textbf{free operad} over $E$, $Free(E)$, is the vector space spanned by all reduced (labelled) trees, where each (internal) vertex is decorated with an element of $E(n)$, $n$ being the number of inputs of the vertex. We compose two elements $\nu,\mu\in Free(E)$ by pasting the output of $\mu$ along the $i$th input of $\nu$. $S_n$ permutes the labels of the inputs of the tree (as always, if the inputs aren't labelled in a picture, we assume the canonical labeling). 
We demand that our elements are linear in each decoration, so
$$ \begin{tikzpicture}
\draw (0,0.6) -- (0,0) node [right=1pt] {\small{$e_4$}};
\draw (0,0) -- (-0.75,-0.5);
\draw (0,0) -- (-0.35,-0.7) node [right=1pt] {\small{$\lambda e_2 + \lambda' e'_2$}};
\draw (0,0) -- (0.25,-0.5);
\draw (0,0) -- (0.75,-0.5);
\draw (-0.35,-0.7) -- (-0.75,-1.1);
\draw (-0.35,-0.7) -- (0.15,-1.2) node [right=1pt] {\small{$e_3$}};
\draw (0.15,-1.2) -- (-0.25,-1.6);
\draw (0.15,-1.2) -- (0.15,-1.6);
\draw (0.15,-1.2) -- (0.55,-1.6);
\end{tikzpicture} = \lambda \begin{tikzpicture}
\draw (0,0.6) -- (0,0) node [right=1pt] {\small{$e_4$}};
\draw (0,0) -- (-0.75,-0.5);
\draw (0,0) -- (-0.35,-0.7) node [right=1pt] {\small{$e_2$}};
\draw (0,0) -- (0.25,-0.5);
\draw (0,0) -- (0.75,-0.5);
\draw (-0.35,-0.7) -- (-0.75,-1.1);
\draw (-0.35,-0.7) -- (0.15,-1.2) node [right=1pt] {\small{$e_3$}};
\draw (0.15,-1.2) -- (-0.25,-1.6);
\draw (0.15,-1.2) -- (0.15,-1.6);
\draw (0.15,-1.2) -- (0.55,-1.6);
\end{tikzpicture} + \lambda' \begin{tikzpicture}
\draw (0,0.6) -- (0,0) node [right=1pt] {\small{$e_4$}};
\draw (0,0) -- (-0.75,-0.5);
\draw (0,0) -- (-0.35,-0.7) node [right=1pt] {\small{$e'_2$}};
\draw (0,0) -- (0.25,-0.5);
\draw (0,0) -- (0.75,-0.5);
\draw (-0.35,-0.7) -- (-0.75,-1.1);
\draw (-0.35,-0.7) -- (0.15,-1.2) node [right=1pt] {\small{$e_3$}};
\draw (0.15,-1.2) -- (-0.25,-1.6);
\draw (0.15,-1.2) -- (0.15,-1.6);
\draw (0.15,-1.2) -- (0.55,-1.6);
\end{tikzpicture}$$
This means that we can view an element of $Free(E)$ as a tree with a tensor product of the decorating elements. 
\end{definition}

\begin{remark} If a certain $E(n)=0$, the free operad over $\{E(n)\}$ will not contain vertices with $n$ inputs. 
\end{remark}

The construction of a free operad allows us to introduce a class of operads, the quadratic operad, that is generated only by binary elements, with certain relations on them: we start with an $S_2$-module $E:=E(2)$, a vector space with an action of $S_2$. 
In terms of trees, basis elements of E look like a tree with one output and two inputs: 
$$ \begin{tikzpicture}
\draw (0,0.4) -- (0,0) node [right=1pt] {\small{$e_i$}};
\draw (0,0) -- (-0.3,-0.3);
\draw (0,0) -- (0.3,-0.3);
\end{tikzpicture} $$
The free operad over $E$, $Free(E)$, has binary trees (trees with exactly two inputs at each vertex) as basis elements (of course, each vertex should be labeled with an element from $E$): 
$$ \begin{tikzpicture}
\draw (0,0.4) -- (0,0) node [right=1pt] {\small{}};
\draw (0,0) -- (-0.3,-0.3);
\draw (0,0) -- (0.3,-0.3);
\draw (-0.3,-0.3) -- (-0.50,-0.6);
\draw (-0.3,-0.3) -- (-0.10,-0.6);
\draw (0.3,-0.3) -- (0.10,-0.6);
\draw (0.3,-0.3) -- (0.50,-0.6);
\draw (0.10,-0.6) -- (-0.10,-0.9);
\draw (0.10,-0.6) -- (0.30,-0.9);
\draw (0.30,-0.9) -- (0.50,-1.2);
\draw (0.30,-0.9) -- (0.10,-1.2);
\end{tikzpicture} $$
We then impose a set of relations on the constructed free operad: we quotient it out by an operadic ideal.

\begin{definition} An \textbf{ideal} of an operad $\mP$ is a collection $\mI$ of subspaces $\mI(n)\subset\mP(n)$ such that: 
\begin{enumerate}
	\item[i)] If $\nu\in\mI(n)$, then $\sigma (\nu) \in \mI(n)$, with $\sigma\in S_n$
	\item[ii)] If $\nu\in\mI(n)$ and $\mu\in\mP(m)$, then $\nu\circ_i\mu \in \mI(n+m-1)$ and $\mu\circ_j\nu \in \mI(n+m-1)$ for all possible $i$ and $j$.
\end{enumerate}
\end{definition}

As is the case with rings, we can generate ideals with elements from the operad $\mP$: if $\nu\in\mP(n)$, then $<\nu>$ consists of all elements that can be written as a composition that includes $\nu$. More generally, if $R$ is a subset of $\mP$, then $<R>$ is the ideal consisting of all elements that can be written as a composition that includes an element $\nu\in R$. 

Quotienting an operad out by an ideal is straightforward: given an operad $\mP$ and an ideal $\mI \subset \mP$, we define the quotient operad $\mP / \mI$ by $(\mP / \mI)(n):=\mP(n)/\mI(n)$. This is again an operad with a well-defined composition because of the above definition of an ideal. We can now define a quadratic operad: 

\begin{definition} Let $E$ be an $S_2$-module over $k$, and $R\subset Free(E)(3)$. Then we define $\mQ(E,R):=Free(E)/<R>$, the \textbf{quadratic operad} generated by $E$, with relations generated by $R$. If $\nu_1,...,\nu_n$ are the basis elements of $E$, and $\mu_1,...,\mu_m$ are the basis elements of $R$, we will also use the notation $\mQ(\nu_1,...,\nu_n; \mu_1,...,\mu_m) := \mQ(E,R)$. 
\end{definition}

\subsection{Examples}

We can now construct a number of basic examples of (quadratic) operads, that are related to different kinds of algebras through representations. We use the following definitions: 

\begin{definition} An \textbf{associative algebra} is a vector space $V$ with an extra binary operation, called the multiplication: $\cdot: V \times V \rightarrow V, (x,y)\mapsto x \cdot y$, that is linear in both terms, such that for all $x,y,z \in V$, $(x \cdot y) \cdot z = x \cdot (y \cdot z)$. 
We say that the multiplication is \textbf{commutative} if $x \cdot y = y \cdot x$ for all $x,y \in V$. 
\end{definition}

\begin{definition} A \textbf{Lie algebra} is a vector space $V$ with an extra binary operation, called the Lie bracket: $[.,.]: V\times V \rightarrow V, (x,y)\mapsto[x,y]$, that is linear in both terms, and for all $x,y,z \in V$ satisfies: 
\begin{enumerate}
	\item[i)] $[x,y]=-[y,x]$ (anticommutativity)
	\item[ii)] $[x,[y,z]]+[y,[z,x]]+[z,[x,y]]=0$ (the Jacobi identity)
\end{enumerate}
\end{definition}

\begin{definition} A \textbf{Poisson algebra} is a vector space $V$ with a commutative multiplication and a Lie bracket that satisfy the following distributive law: 
$[x,y \cdot z] = y \cdot [x,z] + [x,y] \cdot z$. 
\end{definition}

Note that $\cdot,[.,.] \in Hom(V \otimes V, V) = \mE_V(2)$, because the multiplication and the Lie bracket are both bilinear. 

\begin{example} Our first example is the \textbf{associative operad}. It is generated by \begin{tikzpicture}
\draw (0,0.25) -- (0,0);
\draw (0,0) -- (-0.2,-0.2);
\draw (0,0) -- (0.2,-0.2);
\end{tikzpicture} and \begin{tikzpicture}
\draw (0,0.25) -- (0,0);
\draw (0,0) -- (-0.06,-0.06) -- (-0.02,-0.1);
\draw (0,0) -- (0.06,-0.06) -- (-0.08,-0.2);
\draw (0.02,-0.14) -- (0.08,-0.2);
\end{tikzpicture}, with $(12)$ sending the one to the other (so \begin{tikzpicture}
\draw (0,0.25) -- (0,0);
\draw (0,0) -- (-0.2,-0.2) node [below=-1pt] {\tiny{2}};
\draw (0,0) -- (0.2,-0.2) node [below=-1pt] {\tiny{1}};
\end{tikzpicture} $=$ \begin{tikzpicture}
\draw (0,0.25) -- (0,0);
\draw (0,0) -- (-0.06,-0.06) -- (-0.02,-0.1);
\draw (0,0) -- (0.06,-0.06) -- (-0.08,-0.2) node [below=-1pt] {\tiny{1}};
\draw (0.02,-0.14) -- (0.08,-0.2) node [below=-1pt] {\tiny{2}};
\end{tikzpicture}). Because of the equivariance, you can write each tree with only \begin{tikzpicture} 
\draw (0,0.25) -- (0,0);
\draw (0,0) -- (-0.2,-0.2);
\draw (0,0) -- (0.2,-0.2);
\end{tikzpicture}'s: for example, 
$$ \begin{tikzpicture}
\draw (0,0.25) -- (0,0);
\draw (0,0) -- (-0.06,-0.06) -- (-0.02,-0.1);
\draw (0,0) -- (0.06,-0.06) -- (-0.08,-0.2) node [left=-2pt] {\tiny{1}};
\draw (0.02,-0.14) -- (0.08,-0.2);
\draw (0.08,-0.2) -- (-0.12,-0.4) node [below=-1pt] {\tiny{2}};
\draw (0.08,-0.2) -- (0.28,-0.4) node [below=-1pt] {\tiny{3}};
\end{tikzpicture} = \begin{tikzpicture}
\draw (0,0.25) -- (0,0);
\draw (0,0) -- (-0.2,-0.2);
\draw (0,0) -- (0.2,-0.2) node [below=-1pt] {\tiny{1}};
\draw (-0.2,-0.2) -- (-0.4,-0.4) node [below=-1pt] {\tiny{2}};
\draw (-0.2,-0.2) -- (0,-0.4) node [below=-1pt] {\tiny{3}};
\end{tikzpicture} $$
The associative relation is 
$$ \begin{tikzpicture}
\draw (0,0.25) -- (0,0);
\draw (0,0) -- (-0.2,-0.2);
\draw (0,0) -- (0.2,-0.2) node [below right=-3pt and -8pt] {\tiny{$\sigma(3)$}};
\draw (-0.2,-0.2) -- (-0.4,-0.4) node [below left=-2pt and -8pt] {\tiny{$\sigma(1)$}};
\draw (-0.2,-0.2) -- (0,-0.4) node [below right=-2pt and -8pt] {\tiny{$\sigma(2)$}};
\end{tikzpicture} = \begin{tikzpicture}
\draw (0,0.25) -- (0,0);
\draw (0,0) -- (-0.2,-0.2) node [below left=-3pt and -8pt] {\tiny{$\sigma(1)$}};
\draw (0,0) -- (0.2,-0.2);
\draw (0.2,-0.2) -- (0,-0.4) node [below left=-2pt and -8pt] {\tiny{$\sigma(2)$}};
\draw (0.2,-0.2) -- (0.4,-0.4) node [below right=-2pt and -8pt] {\tiny{$\sigma(3)$}};
\end{tikzpicture} $$
with all (six) possible permutations of the inputs, so here $R = \{ \begin{tikzpicture}
\draw (0,0.25) -- (0,0);
\draw (0,0) -- (-0.2,-0.2);
\draw (0,0) -- (0.2,-0.2) node [below right=-3pt and -8pt] {\tiny{$\sigma(3)$}};
\draw (-0.2,-0.2) -- (-0.4,-0.4) node [below left=-2pt and -8pt] {\tiny{$\sigma(1)$}};
\draw (-0.2,-0.2) -- (0,-0.4) node [below right=-2pt and -8pt] {\tiny{$\sigma(2)$}};
\end{tikzpicture} - \begin{tikzpicture}
\draw (0,0.25) -- (0,0);
\draw (0,0) -- (-0.2,-0.2) node [below left=-3pt and -8pt] {\tiny{$\sigma(1)$}};
\draw (0,0) -- (0.2,-0.2);
\draw (0.2,-0.2) -- (0,-0.4) node [below left=-2pt and -8pt] {\tiny{$\sigma(2)$}};
\draw (0.2,-0.2) -- (0.4,-0.4) node [below right=-2pt and -8pt] {\tiny{$\sigma(3)$}};
\end{tikzpicture} | \sigma \in S_3\}$. This relation implies that any tree that contains a \begin{tikzpicture}
\draw (0,0.25) -- (0,0);
\draw (0,0) -- (-0.2,-0.2);
\draw (0,0) -- (0.2,-0.2);
\draw (-0.2,-0.2) -- (-0.4,-0.4);
\draw (-0.2,-0.2) -- (0,-0.4);
\end{tikzpicture} can be replaced by a tree that has a \begin{tikzpicture}
\draw (0,0.25) -- (0,0);
\draw (0,0) -- (-0.2,-0.2);
\draw (0,0) -- (0.2,-0.2);
\draw (0.2,-0.2) -- (0,-0.4);
\draw (0.2,-0.2) -- (0.4,-0.4);
\end{tikzpicture}. So for example 
$$ \begin{tikzpicture}
\draw (0,0.25) -- (0,0);
\draw (0,0.15) -- (0,0) [very thick];
\draw (0,0) -- (-0.25,-0.2) -- (-0.37,-0.32) [very thick];
\draw (0,0) -- (0.15,-0.12) [very thick];
\draw (0,0) -- (0.25,-0.2);
\draw (-0.25,-0.2) -- (-0.16,-0.32) [very thick];
\draw (-0.25,-0.2) -- (-0.1,-0.4);
\draw (-0.25,-0.2) -- (-0.45,-0.4);
\draw (0.25,-0.2) -- (0.1,-0.4);
\draw (0.25,-0.2) -- (0.45,-0.4);
\end{tikzpicture} = \begin{tikzpicture}
\draw (0,0.25) -- (0,0);
\draw (0,0.15) -- (0,0) [very thick];
\draw (0,0) -- (-0.12,-0.12) [very thick];
\draw (0,0) -- (-0.2,-0.2);
\draw (0,0) -- (0.2,-0.2) [very thick];
\draw (0.2,-0.2) -- (0.08,-0.32) [very thick];
\draw (0.2,-0.2) -- (0.32,-0.32) [very thick];
\draw (0.2,-0.2) -- (0,-0.4);
\draw (0.2,-0.2) -- (0.4,-0.4);
\draw (0.4,-0.4) -- (0.2,-0.6);
\draw (0.4,-0.4) -- (0.6,-0.6);
\end{tikzpicture} $$
Here, we changed the bolded parts of the tree according to our rule, and we left the rest of the tree intact. Please note that the order of the inputs isn't changed, so for example $$ \begin{tikzpicture}
\draw (0,0.25) -- (0,0);
\draw (0,0) -- (-0.25,-0.2);
\draw (0,0) -- (0.25,-0.2);
\draw (-0.25,-0.2) -- (-0.1,-0.4) node [below=-1pt] {\tiny{4}};
\draw (-0.25,-0.2) -- (-0.45,-0.4) node [below=-1pt] {\tiny{2}};
\draw (0.25,-0.2) -- (0.1,-0.4) node [below=-1pt] {\tiny{1}};
\draw (0.25,-0.2) -- (0.45,-0.4) node [below=-1pt] {\tiny{3}};
\end{tikzpicture} = \begin{tikzpicture}
\draw (0,0.25) -- (0,0);
\draw (0,0) -- (-0.2,-0.2) node [below=-1pt] {\tiny{2}};
\draw (0,0) -- (0.2,-0.2);
\draw (0.2,-0.2) -- (0,-0.4) node [below=-1pt] {\tiny{4}};
\draw (0.2,-0.2) -- (0.4,-0.4);
\draw (0.4,-0.4) -- (0.2,-0.6) node [below=-1pt] {\tiny{1}};
\draw (0.4,-0.4) -- (0.6,-0.6) node [below=-1pt] {\tiny{3}};
\end{tikzpicture} $$ 
Our relation means that we can write every element of our operad as a combination of trees that look like 
$$ \begin{tikzpicture}
\draw (0,0.25) -- (0,0);
\draw (0,0) -- (-0.2,-0.2) node [below left=-3pt and -5pt] {\tiny{$\sigma(1)$}};
\draw (0,0) -- (0.2,-0.2);
\draw (0.2,-0.2) -- (0.0,-0.4) node [below left=-3pt and -5pt] {\tiny{$\sigma(2)$}};
\draw (0.2,-0.2) -- (0.4,-0.4);
\draw (0.4,-0.4) -- (0.6,-0.6) [dotted];
\draw (0.7,-0.7) -- (0.5,-0.9) node [below left=-3pt and -5pt] {\tiny{$\sigma(n$-$2)$}};
\draw (0.6,-0.6) -- (0.9,-0.9);
\draw (0.9,-0.9) -- (0.7,-1.1) node [below left=-3pt and -5pt] {\tiny{$\sigma(n$-$1)$}};
\draw (0.9,-0.9) -- (1.1,-1.1) node [below right=-3pt and -8pt] {\tiny{$\sigma(n)$}};
\end{tikzpicture} $$
So $\mAss:= \mQ(\begin{tikzpicture}
\draw (0,0.25) -- (0,0);
\draw (0,0) -- (-0.2,-0.2);
\draw (0,0) -- (0.2,-0.2);
\end{tikzpicture}, \begin{tikzpicture}
\draw (0,0.25) -- (0,0);
\draw (0,0) -- (-0.06,-0.06) -- (-0.02,-0.1);
\draw (0,0) -- (0.06,-0.06) -- (-0.08,-0.2);
\draw (0.02,-0.14) -- (0.08,-0.2);
\end{tikzpicture};
\begin{tikzpicture}
\draw (0,0.25) -- (0,0);
\draw (0,0) -- (-0.2,-0.2);
\draw (0,0) -- (0.2,-0.2);
\draw (-0.2,-0.2) -- (-0.4,-0.4);
\draw (-0.2,-0.2) -- (0,-0.4);
\end{tikzpicture} - \begin{tikzpicture}
\draw (0,0.25) -- (0,0);
\draw (0,0) -- (-0.2,-0.2);
\draw (0,0) -- (0.2,-0.2);
\draw (0.2,-0.2) -- (0,-0.4);
\draw (0.2,-0.2) -- (0.4,-0.4);
\end{tikzpicture}$) (we routinely forget to label the inputs if they are in the same order in each tree and the relation is satisfied for all possible permutations of inputs) is spanned by these trees. In total we have $n!$ of these trees in $\mAss(n)$, all of them linear independent because there is no way to switch inputs, so $dim(\mAss(n))=n!$. Because the order of inputs is all that matters, we say that $\mAss(n)$ is spanned by \begin{tikzpicture}
\draw (0,0.25) -- (0,-0.00);
\draw (0,0) -- (-0.3,-0.2);
\draw (0,0) -- (-0.15,-0.2);
\draw (-0.10,-0.2) -- (0.25,-0.2) [dotted];
\draw (0,0) -- (0.3,-0.2);
\end{tikzpicture}, which we understand to be of the form shown above.

Any representation of $\mAss$ is defined by the image of \tree. Of course, $\zeta (\tree) \in Hom(V \otimes V, V)$, so our representation defines a (bilinear) multiplication $\cdot:= \zeta(\tree)$ on $V$. Our relation tells us that $(x \cdot y) \cdot z = x \cdot (y \cdot z)$, which is exactly our associative relation. This means that a representation of $\mAss$ is exactly an associative algebra structure on $V$.
\end{example}

\begin{example} The \textbf{commutative operad} is generated by \comtree, with $(12) \circ \comtree = \comtree$. We again have the associative relation, 
$$ \begin{tikzpicture}
\draw (0,0.25) -- (0,0);
\draw (0,0) -- (-0.2,-0.2);
\draw (0,0) -- (0.2,-0.2);
\draw (-0.2,-0.2) -- (-0.4,-0.4);
\draw (-0.2,-0.2) -- (0,-0.4);
\draw (0,0) circle (1pt) [fill=white];
\draw (-0.2,-0.2) circle (1pt) [fill=white];
\end{tikzpicture} = \begin{tikzpicture}
\draw (0,0.25) -- (0,0);
\draw (0,0) -- (-0.2,-0.2);
\draw (0,0) -- (0.2,-0.2);
\draw (0.2,-0.2) -- (0,-0.4);
\draw (0.2,-0.2) -- (0.4,-0.4);
\draw (0,0) circle (1pt) [fill=white];
\draw (0.2,-0.2) circle (1pt) [fill=white];
\end{tikzpicture} $$
So $\mCom := \mQ(\comtree;\begin{tikzpicture}
\draw (0,0.25) -- (0,0);
\draw (0,0) -- (-0.2,-0.2);
\draw (0,0) -- (0.2,-0.2);
\draw (-0.2,-0.2) -- (-0.4,-0.4);
\draw (-0.2,-0.2) -- (0,-0.4);
\draw (0,0) circle (1pt) [fill=white];
\draw (-0.2,-0.2) circle (1pt) [fill=white];
\end{tikzpicture} - \begin{tikzpicture}
\draw (0,0.25) -- (0,0);
\draw (0,0) -- (-0.2,-0.2);
\draw (0,0) -- (0.2,-0.2);
\draw (0.2,-0.2) -- (0,-0.4);
\draw (0.2,-0.2) -- (0.4,-0.4);
\draw (0,0) circle (1pt) [fill=white];
\draw (0.2,-0.2) circle (1pt) [fill=white];
\end{tikzpicture})$. We know that we can use the relation to get our trees in a standard form, but we can now go one step further: because the action of $S_2$ is trivial, we can freely switch inputs of any vertex, so for example
$$ \begin{tikzpicture}
\draw (0,0.25) -- (0,0);
\draw (0,0) -- (-0.2,-0.2) node [below=-1pt] {\tiny{1}};
\draw (0,0) -- (0,-0.2) node [below=-1pt] {\tiny{2}};
\draw (0,0) -- (0.2,-0.2) node [below=-1pt] {\tiny{3}};
\draw (0,0) circle (1pt) [fill=white];
\end{tikzpicture} = \begin{tikzpicture}
\draw (0,0.25) -- (0,0);
\draw (0,0) -- (-0.2,-0.2) node [below=-1pt] {\tiny{1}};
\draw (0,0) -- (0.2,-0.2);
\draw (0.2,-0.2) -- (0,-0.4) node [below=-1pt] {\tiny{2}};
\draw (0.2,-0.2) -- (0.4,-0.4) node [below=-1pt] {\tiny{3}};
\draw (0,0) circle (1pt) [fill=white];
\draw (0.2,-0.2) circle (1pt) [fill=white];
\end{tikzpicture} = \begin{tikzpicture}
\draw (0,0.25) -- (0,0);
\draw (0,0) -- (-0.2,-0.2);
\draw (0,0) -- (0.2,-0.2) node [below=-1pt] {\tiny{3}};
\draw (-0.2,-0.2) -- (-0.4,-0.4) node [below=-1pt] {\tiny{1}};
\draw (-0.2,-0.2) -- (0,-0.4) node [below=-1pt] {\tiny{2}};
\draw (0,0) circle (1pt) [fill=white];
\draw (-0.2,-0.2) circle (1pt) [fill=white];
\end{tikzpicture} = \begin{tikzpicture}
\draw (0,0.25) -- (0,0);
\draw (0,0) -- (-0.2,-0.2);
\draw (0,0) -- (0.2,-0.2) node [below=-1pt] {\tiny{3}};
\draw (-0.2,-0.2) -- (-0.4,-0.4) node [below=-1pt] {\tiny{2}};
\draw (-0.2,-0.2) -- (0,-0.4) node [below=-1pt] {\tiny{1}};
\draw (0,0) circle (1pt) [fill=white];
\draw (-0.2,-0.2) circle (1pt) [fill=white];
\end{tikzpicture} = \begin{tikzpicture}
\draw (0,0.25) -- (0,0);
\draw (0,0) -- (-0.2,-0.2) node [below=-1pt] {\tiny{2}};
\draw (0,0) -- (0.2,-0.2);
\draw (0.2,-0.2) -- (0,-0.4) node [below=-1pt] {\tiny{1}};
\draw (0.2,-0.2) -- (0.4,-0.4) node [below=-1pt] {\tiny{3}};
\draw (0,0) circle (1pt) [fill=white];
\draw (0.2,-0.2) circle (1pt) [fill=white];
\end{tikzpicture} = \begin{tikzpicture}
\draw (0,0.25) -- (0,0);
\draw (0,0) -- (-0.2,-0.2) node [below=-1pt] {\tiny{2}};
\draw (0,0) -- (0,-0.2) node [below=-1pt] {\tiny{1}};
\draw (0,0) -- (0.2,-0.2) node [below=-1pt] {\tiny{3}};
\draw (0,0) circle (1pt) [fill=white];
\end{tikzpicture}$$
This means that any tree in our operad can be rewritten to be of the form \begin{tikzpicture}
\draw (0,0.25) -- (0,-0.00);
\draw (0,0) -- (-0.3,-0.2);
\draw (0,0) -- (-0.15,-0.2);
\draw (-0.10,-0.2) -- (0.25,-0.2) [dotted];
\draw (0,0) -- (0.3,-0.2);
\draw (0,0) circle (1pt) [fill=white];
\end{tikzpicture} $=$ \begin{tikzpicture}
\draw (0,0.25) -- (0,-0.00);
\draw (0,0) -- (-0.3,-0.2) node [below=-1pt] {\tiny{1}};
\draw (0,0) -- (-0.15,-0.2) node [below=-1pt] {\tiny{2}};
\draw (-0.10,-0.2) -- (0.25,-0.2) [dotted];
\draw (0,0) -- (0.3,-0.2) node [below=0pt] {\tiny{$n$}};
\draw (0,0) circle (1pt) [fill=white];
\end{tikzpicture}. So $\mCom(n)$ one-dimensional for each $n$, and it's spanned by \begin{tikzpicture}
\draw (0,0.25) -- (0,-0.00);
\draw (0,0) -- (-0.3,-0.2);
\draw (0,0) -- (-0.15,-0.2);
\draw (-0.10,-0.2) -- (0.25,-0.2) [dotted];
\draw (0,0) -- (0.3,-0.2);
\draw (0,0) circle (1pt) [fill=white];
\end{tikzpicture}. We also see that the action of $S_n$ on $\mCom(n)$ is trivial for each $n$. 

Beacause of the associative relation, representations of $\mCom$ are again associative algebra's, and the trivial action of $S_2$ exactly means that $x \cdot y = y \cdot x$ (with $\cdot:= \zeta (\comtree)$). So representations of the commutative operad are exactly commutative (associative) algebras. 
\end{example}

\begin{example} Our next example is the \textbf{Lie operad}, $\mLie:=\mQ(\tree;\begin{tikzpicture}
\draw (0,0.25) -- (0,0);
\draw (0,0) -- (-0.2,-0.2) node [below=-1pt] {\tiny{1}};
\draw (0,0) -- (0.2,-0.2);
\draw (0.2,-0.2) -- (0,-0.4) node [below=-1pt] {\tiny{2}};
\draw (0.2,-0.2) -- (0.4,-0.4) node [below=-1pt] {\tiny{3}};
\end{tikzpicture} + \begin{tikzpicture}
\draw (0,0.25) -- (0,0);
\draw (0,0) -- (-0.2,-0.2) node [below=-1pt] {\tiny{2}};
\draw (0,0) -- (0.2,-0.2);
\draw (0.2,-0.2) -- (0,-0.4) node [below=-1pt] {\tiny{3}};
\draw (0.2,-0.2) -- (0.4,-0.4) node [below=-1pt] {\tiny{1}};
\end{tikzpicture} + \begin{tikzpicture}
\draw (0,0.25) -- (0,0);
\draw (0,0) -- (-0.2,-0.2) node [below=-1pt] {\tiny{3}};
\draw (0,0) -- (0.2,-0.2);
\draw (0.2,-0.2) -- (0,-0.4) node [below=-1pt] {\tiny{1}};
\draw (0.2,-0.2) -- (0.4,-0.4) node [below=-1pt] {\tiny{2}};
\end{tikzpicture})$, with $(12)\circ \tree = -\tree$. Using this, our relation can be rewritten to $$\begin{tikzpicture}
\draw (0,0.25) -- (0,0);
\draw (0,0) -- (0.2,-0.2) node [below=-1pt] {\tiny{1}};
\draw (0,0) -- (-0.2,-0.2);
\draw (-0.2,-0.2) -- (-0.4,-0.4) node [below=-1pt] {\tiny{2}};
\draw (-0.2,-0.2) -- (0,-0.4) node [below=-1pt] {\tiny{3}};
\end{tikzpicture} = \begin{tikzpicture}
\draw (0,0.25) -- (0,0);
\draw (0,0) -- (-0.2,-0.2) node [below=-1pt] {\tiny{2}};
\draw (0,0) -- (0.2,-0.2);
\draw (0.2,-0.2) -- (0,-0.4) node [below=-1pt] {\tiny{3}};
\draw (0.2,-0.2) -- (0.4,-0.4) node [below=-1pt] {\tiny{1}};
\end{tikzpicture} + \begin{tikzpicture}
\draw (0,0.25) -- (0,0);
\draw (0,0) -- (-0.2,-0.2) node [below=-1pt] {\tiny{3}};
\draw (0,0) -- (0.2,-0.2);
\draw (0.2,-0.2) -- (0,-0.4) node [below=-1pt] {\tiny{1}};
\draw (0.2,-0.2) -- (0.4,-0.4) node [below=-1pt] {\tiny{2}};
\end{tikzpicture}$$
so again, all our trees are a sum of trees looking like
$$ \begin{tikzpicture}
\draw (0,0.25) -- (0,0);
\draw (0,0) -- (-0.2,-0.2) node [below left=-3pt and -5pt] {\tiny{$\sigma(1)$}};
\draw (0,0) -- (0.2,-0.2);
\draw (0.2,-0.2) -- (0.0,-0.4) node [below left=-3pt and -5pt] {\tiny{$\sigma(2)$}};
\draw (0.2,-0.2) -- (0.4,-0.4);
\draw (0.4,-0.4) -- (0.6,-0.6) [dotted];
\draw (0.7,-0.7) -- (0.5,-0.9) node [below left=-3pt and -5pt] {\tiny{$\sigma(n$-$2)$}};
\draw (0.6,-0.6) -- (0.9,-0.9);
\draw (0.9,-0.9) -- (0.7,-1.1) node [below left=-3pt and -5pt] {\tiny{$\sigma(n$-$1)$}};
\draw (0.9,-0.9) -- (1.1,-1.1) node [below right=-3pt and -8pt] {\tiny{$\sigma(n)$}};
\end{tikzpicture} $$
Another way to write our relation is 
$$ \begin{tikzpicture}
\draw (0,0.25) -- (0,0);
\draw (0,0) -- (-0.2,-0.2) node [below=-1pt] {\tiny{3}};
\draw (0,0) -- (0.2,-0.2);
\draw (0.2,-0.2) -- (0,-0.4) node [below=-1pt] {\tiny{1}};
\draw (0.2,-0.2) -- (0.4,-0.4) node [below=-1pt] {\tiny{2}};
\end{tikzpicture} = \begin{tikzpicture}
\draw (0,0.25) -- (0,0);
\draw (0,0) -- (-0.2,-0.2) node [below=-1pt] {\tiny{1}};
\draw (0,0) -- (0.2,-0.2);
\draw (0.2,-0.2) -- (0,-0.4) node [below=-1pt] {\tiny{3}};
\draw (0.2,-0.2) -- (0.4,-0.4) node [below=-1pt] {\tiny{2}};
\end{tikzpicture} + \begin{tikzpicture}
\draw (0,0.25) -- (0,0);
\draw (0,0) -- (-0.2,-0.2) node [below=-1pt] {\tiny{2}};
\draw (0,0) -- (0.2,-0.2);
\draw (0.2,-0.2) -- (0,-0.4) node [below=-1pt] {\tiny{1}};
\draw (0.2,-0.2) -- (0.4,-0.4) node [below=-1pt] {\tiny{3}};
\end{tikzpicture} $$
This means that we can move one label (in our relation above it's 3, typically it's $n$) to the lowest spot in our tree. All such trees are linear independent, so $\mLie (n)$ is spanned by the $(n-1)!$ trees like 
$$ \begin{tikzpicture}
\draw (0,0.25) -- (0,0);
\draw (0,0) -- (-0.2,-0.2) node [below left=-3pt and -5pt] {\tiny{$\sigma(1)$}};
\draw (0,0) -- (0.2,-0.2);
\draw (0.2,-0.2) -- (0.0,-0.4) node [below left=-3pt and -5pt] {\tiny{$\sigma(2)$}};
\draw (0.2,-0.2) -- (0.4,-0.4);
\draw (0.4,-0.4) -- (0.6,-0.6) [dotted];
\draw (0.7,-0.7) -- (0.5,-0.9) node [below left=-3pt and -5pt] {\tiny{$\sigma(n$-$2)$}};
\draw (0.6,-0.6) -- (0.9,-0.9);
\draw (0.9,-0.9) -- (0.7,-1.1) node [below left=-3pt and -5pt] {\tiny{$\sigma(n$-$1)$}};
\draw (0.9,-0.9) -- (1.1,-1.1) node [below=-1pt] {\tiny{$n$}};
\end{tikzpicture} $$

A representation of $\mLie$ will be a Lie algebra: if we define $[.,.] := \zeta(\tree)$ then $(12) \circ \tree = -\tree$ means that $[x,y] = - [y,x]$ and our relation exactly translates into the Jacobi identity. 
\end{example}

\begin{example} The last example we will review is the \textbf{Poisson operad}. It is generated by a commutative element ($\comtree = (12) \circ \comtree$, \begin{tikzpicture}
\draw (0,0.25) -- (0,0);
\draw (0,0) -- (-0.2,-0.2);
\draw (0,0) -- (0.2,-0.2);
\draw (-0.2,-0.2) -- (-0.4,-0.4);
\draw (-0.2,-0.2) -- (0,-0.4);
\draw (0,0) circle (1pt) [fill=white];
\draw (-0.2,-0.2) circle (1pt) [fill=white];
\end{tikzpicture} = \begin{tikzpicture}
\draw (0,0.25) -- (0,0);
\draw (0,0) -- (-0.2,-0.2);
\draw (0,0) -- (0.2,-0.2);
\draw (0.2,-0.2) -- (0,-0.4);
\draw (0.2,-0.2) -- (0.4,-0.4);
\draw (0,0) circle (1pt) [fill=white];
\draw (0.2,-0.2) circle (1pt) [fill=white];
\end{tikzpicture}) and a Lie element ($\tree = - (12) \circ \tree$, \begin{tikzpicture}
\draw (0,0.25) -- (0,0);
\draw (0,0) -- (-0.2,-0.2) node [below=-1pt] {\tiny{1}};
\draw (0,0) -- (0.2,-0.2);
\draw (0.2,-0.2) -- (0,-0.4) node [below=-1pt] {\tiny{2}};
\draw (0.2,-0.2) -- (0.4,-0.4) node [below=-1pt] {\tiny{3}};
\end{tikzpicture} + \begin{tikzpicture}
\draw (0,0.25) -- (0,0);
\draw (0,0) -- (-0.2,-0.2) node [below=-1pt] {\tiny{2}};
\draw (0,0) -- (0.2,-0.2);
\draw (0.2,-0.2) -- (0,-0.4) node [below=-1pt] {\tiny{3}};
\draw (0.2,-0.2) -- (0.4,-0.4) node [below=-1pt] {\tiny{1}};
\end{tikzpicture} + \begin{tikzpicture}
\draw (0,0.25) -- (0,0);
\draw (0,0) -- (-0.2,-0.2) node [below=-1pt] {\tiny{3}};
\draw (0,0) -- (0.2,-0.2);
\draw (0.2,-0.2) -- (0,-0.4) node [below=-1pt] {\tiny{1}};
\draw (0.2,-0.2) -- (0.4,-0.4) node [below=-1pt] {\tiny{2}};
\end{tikzpicture}), with the following distributivity relation:
$$ \begin{tikzpicture}
\draw (0,0.25) -- (0,0);
\draw (0,0) -- (-0.2,-0.2) node [below=-1pt] {\tiny{1}};
\draw (0,0) -- (0.2,-0.2);
\draw (0.2,-0.2) -- (0,-0.4) node [below=-1pt] {\tiny{2}};
\draw (0.2,-0.2) -- (0.4,-0.4) node [below=-1pt] {\tiny{3}};
\draw (0.2,-0.2) circle (1pt) [fill=white];
\end{tikzpicture} = \begin{tikzpicture}
\draw (0,0.25) -- (0,0);
\draw (0,0) -- (-0.2,-0.2) node [below=-1pt] {\tiny{2}};
\draw (0,0) -- (0.2,-0.2);
\draw (0.2,-0.2) -- (0,-0.4) node [below=-1pt] {\tiny{1}};
\draw (0.2,-0.2) -- (0.4,-0.4) node [below=-1pt] {\tiny{3}};
\draw (0,0) circle (1pt) [fill=white];
\end{tikzpicture} + \begin{tikzpicture}
\draw (0,0.25) -- (0,0);
\draw (0,0) -- (-0.2,-0.2);
\draw (0,0) -- (0.2,-0.2) node [below=-1pt] {\tiny{3}};
\draw (-0.2,-0.2) -- (-0.4,-0.4) node [below=-1pt] {\tiny{1}};
\draw (-0.2,-0.2) -- (0,-0.4) node [below=-1pt] {\tiny{2}};
\draw (0,0) circle (1pt) [fill=white];
\end{tikzpicture} $$ 
So, $\mPoiss := \mQ ( \comtree,\tree;
\begin{tikzpicture}
\draw (0,0.25) -- (0,0);
\draw (0,0) -- (-0.2,-0.2);
\draw (0,0) -- (0.2,-0.2);
\draw (-0.2,-0.2) -- (-0.4,-0.4);
\draw (-0.2,-0.2) -- (0,-0.4);
\draw (0,0) circle (1pt) [fill=white];
\draw (-0.2,-0.2) circle (1pt) [fill=white];
\end{tikzpicture} - \begin{tikzpicture}
\draw (0,0.25) -- (0,0);
\draw (0,0) -- (-0.2,-0.2);
\draw (0,0) -- (0.2,-0.2);
\draw (0.2,-0.2) -- (0,-0.4);
\draw (0.2,-0.2) -- (0.4,-0.4);
\draw (0,0) circle (1pt) [fill=white];
\draw (0.2,-0.2) circle (1pt) [fill=white];
\end{tikzpicture}, \begin{tikzpicture}
\draw (0,0.25) -- (0,0);
\draw (0,0) -- (-0.2,-0.2) node [below=-1pt] {\tiny{1}};
\draw (0,0) -- (0.2,-0.2);
\draw (0.2,-0.2) -- (0,-0.4) node [below=-1pt] {\tiny{2}};
\draw (0.2,-0.2) -- (0.4,-0.4) node [below=-1pt] {\tiny{3}};
\end{tikzpicture} + \begin{tikzpicture}
\draw (0,0.25) -- (0,0);
\draw (0,0) -- (-0.2,-0.2) node [below=-1pt] {\tiny{2}};
\draw (0,0) -- (0.2,-0.2);
\draw (0.2,-0.2) -- (0,-0.4) node [below=-1pt] {\tiny{3}};
\draw (0.2,-0.2) -- (0.4,-0.4) node [below=-1pt] {\tiny{1}};
\end{tikzpicture} + \begin{tikzpicture}
\draw (0,0.25) -- (0,0);
\draw (0,0) -- (-0.2,-0.2) node [below=-1pt] {\tiny{3}};
\draw (0,0) -- (0.2,-0.2);
\draw (0.2,-0.2) -- (0,-0.4) node [below=-1pt] {\tiny{1}};
\draw (0.2,-0.2) -- (0.4,-0.4) node [below=-1pt] {\tiny{2}};
\end{tikzpicture}, \begin{tikzpicture}
\draw (0,0.25) -- (0,0);
\draw (0,0) -- (-0.2,-0.2) node [below=-1pt] {\tiny{1}};
\draw (0,0) -- (0.2,-0.2);
\draw (0.2,-0.2) -- (0,-0.4) node [below=-1pt] {\tiny{2}};
\draw (0.2,-0.2) -- (0.4,-0.4) node [below=-1pt] {\tiny{3}};
\draw (0.2,-0.2) circle (1pt) [fill=white];
\end{tikzpicture} - \begin{tikzpicture}
\draw (0,0.25) -- (0,0);
\draw (0,0) -- (-0.2,-0.2) node [below=-1pt] {\tiny{2}};
\draw (0,0) -- (0.2,-0.2);
\draw (0.2,-0.2) -- (0,-0.4) node [below=-1pt] {\tiny{1}};
\draw (0.2,-0.2) -- (0.4,-0.4) node [below=-1pt] {\tiny{3}};
\draw (0,0) circle (1pt) [fill=white];
\end{tikzpicture} - \begin{tikzpicture}
\draw (0,0.25) -- (0,0);
\draw (0,0) -- (-0.2,-0.2);
\draw (0,0) -- (0.2,-0.2) node [below=-1pt] {\tiny{3}};
\draw (-0.2,-0.2) -- (-0.4,-0.4) node [below=-1pt] {\tiny{1}};
\draw (-0.2,-0.2) -- (0,-0.4) node [below=-1pt] {\tiny{2}};
\draw (0,0) circle (1pt) [fill=white];
\end{tikzpicture})$. The distributive relation allows us to move all commutative elements upwards, so $\mPoiss (n)$ is spanned by elements that look like
$$ \begin{tikzpicture}
\draw (0,0.4) -- (0,0);
\draw (0,0) -- (-1.20,-0.40) node [below=-1pt] {\tiny{$\sigma(1)$}};
\draw[scale=0.75] (-1.00,-0.40) -- (-0.40,-0.40) [dotted];
\draw (0,0) -- (-0.30,-0.40)
;
\draw (0,0) -- (0.30,-0.60);
\draw (0.3,-0.6) -- (0.1,-0.8);
\draw (0.3,-0.6) -- (0.5,-0.8);
\draw (0.5,-0.8) -- (0.3,-1.0);
\draw (0.5,-0.8) -- (0.7,-1.0);
\draw (0.7,-1.0) -- (0.9,-1.2) [dotted];
\draw (0.9,-1.2) -- (0.7,-1.4);
\draw (0.9,-1.2) -- (1.1,-1.4);
\draw (0,0) -- (1.20,-0.60);
\draw (1.2,-0.6) -- (1.0,-0.8);
\draw (1.2,-0.6) -- (1.4,-0.8);
\draw (1.4,-0.8) -- (1.2,-1.0);
\draw (1.4,-0.8) -- (1.6,-1.0);
\draw (1.6,-1.0) -- (1.8,-1.2) [dotted];
\draw (1.8,-1.2) -- (1.6,-1.4);
\draw (1.8,-1.2) -- (2.0,-1.4) node [below=-1pt] {\tiny{$\sigma(n)$}};
\draw[scale=0.75] (0.40,-0.60) -- (1.00,-0.60) [dotted];
\draw (0,0) circle (1.5pt) [fill=white];
\end{tikzpicture} $$
and by the elements that span $\mCom$ and $\mLie$.
Because of the Jacobi identity, and because $\comtree$ and $\tree$ are (anti-)commutative, these elements are not all linear independent. To compute the dimension of $\mPoiss(n)$, we follow the proof of theorem 4.2 of \cite{BDK}. We first note that every tree like the one above (and all elements that span $\mCom$ and $\mLie$) gives a partition $A_1 \sqcup ... \sqcup A_k = \{1, ... , n\}$, where $A_i$ consists of all labels of the $i$th branch of the tree, and $k$ is the number of branches (with a ``branch", we mean a tree of Lie vertices extending from the commutative vertex). Note that each $A_i$ is ordered. The Jacobi identity allows us to put the largest number of each branch at the bottom, and our commutative element then allows us to rearrange the branches such that $max(A_1) < ... < max(A_k)$. We say that $A_i = \{ a_{i,1}, ... , a_{i,m_i} \}$, and we now see that there is a bijection between $S_n$ and the elements that span $\mPoiss(n)$, with 
$A_1 \sqcup ... \sqcup A_k \mapsto (a_{1,1}, ... , a_{1,m_1}) ... (a_{k,1}, ... , a_{k,m_k})$. So $dim(\mPoiss(n)) = n!$. 

Of course, the relations we gave mean exactly that a representation of $\mPoiss$ is a Poisson algebra with $\cdot:= \zeta(\comtree)$ and $[.,.]:= \zeta(\tree)$. 
\end{example}

\subsection{The quadratic dual}

We now want to define the dual of a quadratic operad $\mQ(E,R)$, which will lead to our first results. For this, we first want to identify $Free(E)^*$ and $Free(E^*)$: we choose the dual of a tree $T$ with vertices $v_i$ decorated with basis elements $e_i \in E(n_i)$ and inputs labelled $\sigma(1),...,\sigma(N)$ to be the same tree, with the same labelling of inputs, and with the vertices $v_i$ decorated by $e_i^*$. Then, we say that our map is linear in each vertex, so $(T,\alpha_1 \otimes ... \otimes \alpha_n): (T,\nu_1 \otimes ... \otimes \nu_n) \rightarrow \alpha_1 (\nu_1) ... \alpha_n (\nu_n)$. 

\begin{definition} In this way, $Free(E)^*$ becomes an operad, with the composition and action of $S_n$ induced by the ones on $Free(E^*)$. 
\end{definition}

In a way, the relations on our dual operad will be the ``orthogonal" of those on our original operad:

\begin{definition} If $V$ is a vector space, and $W\subset V$ is a subspace, then we can define the \textbf{annihilator} of $W$, a subspace of the dual space $V^*$: $Ann(W) := \{\alpha\in V^*|\alpha(w)=0, \forall w\in W\}$.
\end{definition}

We would like to now define the dual of $\mQ(E,R)$ as $Free(E^*)/<Ann(R)>$. However, the construction is not this straightforward; we first need to take care of some sign issues: 

\begin{definition} For a $S_n$-module $V$ over the ground field $k$, we define its \textbf{Czech dual} $V^\vee := V^*\otimes sgn$. Here, $sgn$ is the sign representation of $S_n$, and $S_n$ acts on $V^*$ in the usual way: for $\sigma \in S_n$, $\alpha \in V^\vee$, $\sigma(\alpha)=(sgn(\sigma))(\alpha \circ \sigma^{-1})$. 
\end{definition}

We can now define $Free(E^\vee)$ just like we defined $Free(E^*)$. But while $Free(E^\vee)$ and $Free(E)^\vee$ are clearly isomorphic as vector spaces, they are not operadically isomorphic via the canonical construction $v^* \circ_i w^* = (v\circ_i w)^*$: we need an extra sign convention, as is illustrated by the following computation. 
If 
$$ \begin{tikzpicture}
\draw (0,0.4) -- (0,0) node [right=1pt] {\small{$\alpha$}};
\draw (0,0) -- (-0.3,-0.3) node [right=1pt] {\small{$\beta$}};
\draw (0,0) -- (0.3,-0.3) node [below=-1pt] {\tiny{3}};
\draw (-0.3,-0.3) -- (-0.6,-0.6) node [below=-1pt] {\tiny{1}};
\draw (-0.3,-0.3) -- (0,-0.6) node [below=-1pt] {\tiny{2}};
\end{tikzpicture} (\begin{tikzpicture}
\draw (0,0.4) -- (0,0) node [right=1pt] {\small{$\mu$}};
\draw (0,0) -- (-0.3,-0.3) node [right=1pt] {\small{$\nu$}};
\draw (0,0) -- (0.3,-0.3) node [below=-1pt] {\tiny{3}};
\draw (-0.3,-0.3) -- (-0.6,-0.6) node [below=-1pt] {\tiny{1}};
\draw (-0.3,-0.3) -- (0,-0.6) node [below=-1pt] {\tiny{2}};
\end{tikzpicture}) = \pm\alpha(\mu) \beta(\nu)$$
then
\begin{align*} \begin{tikzpicture}
\draw (0,0.4) -- (0,0) node [right=1pt] {\small{$\alpha$}};
\draw (0,0) -- (-0.3,-0.3) node [below=-1pt] {\tiny{1}};
\draw (0,0) -- (0.3,-0.3) node [right=1pt] {\small{$\beta$}};
\draw (0.3,-0.3) -- (0,-0.6) node [below=-1pt] {\tiny{2}};
\draw (0.3,-0.3) -- (0.6,-0.6) node [below=-1pt] {\tiny{3}};
\end{tikzpicture} (\begin{tikzpicture}
\draw (0,0.4) -- (0,0) node [right=1pt] {\small{$\mu$}};
\draw (0,0) -- (-0.3,-0.3) node [below=-1pt] {\tiny{1}};
\draw (0,0) -- (0.3,-0.3) node [right=1pt] {\small{$\nu$}};
\draw (0.3,-0.3) -- (0,-0.6) node [below=-1pt] {\tiny{2}};
\draw (0.3,-0.3) -- (0.6,-0.6) node [below=-1pt] {\tiny{3}};
\end{tikzpicture}) & = \begin{tikzpicture}
\draw (0,0.4) -- (0,0) node [right=1pt] {\small{$(12)\alpha$}};
\draw (0,0) -- (-0.3,-0.3) node [right=1pt] {\small{$\beta$}};
\draw (0,0) -- (0.3,-0.3) node [below=-1pt] {\tiny{1}};
\draw (-0.3,-0.3) -- (-0.6,-0.6) node [below=-1pt] {\tiny{2}};
\draw (-0.3,-0.3) -- (0,-0.6) node [below=-1pt] {\tiny{3}};
\end{tikzpicture} (\begin{tikzpicture}
\draw (0,0.4) -- (0,0) node [right=1pt] {\small{$\mu$}};
\draw (0,0) -- (-0.3,-0.3) node [below=-1pt] {\tiny{1}};
\draw (0,0) -- (0.3,-0.3) node [right=1pt] {\small{$\nu$}};
\draw (0.3,-0.3) -- (0,-0.6) node [below=-1pt] {\tiny{2}};
\draw (0.3,-0.3) -- (0.6,-0.6) node [below=-1pt] {\tiny{3}};
\end{tikzpicture}) = - \begin{tikzpicture}
\draw (0,0.4) -- (0,0) node [right=1pt] {\small{$\alpha$}};
\draw (0,0) -- (-0.3,-0.3) node [right=1pt] {\small{$\beta$}};
\draw (0,0) -- (0.3,-0.3) node [below=-1pt] {\tiny{1}};
\draw (-0.3,-0.3) -- (-0.6,-0.6) node [below=-1pt] {\tiny{2}};
\draw (-0.3,-0.3) -- (0,-0.6) node [below=-1pt] {\tiny{3}};
\end{tikzpicture} (\begin{tikzpicture}
\draw (0,0.4) -- (0,0) node [right=1pt] {\small{$(12)\mu$}};
\draw (0,0) -- (-0.3,-0.3) node [below=-1pt] {\tiny{1}};
\draw (0,0) -- (0.3,-0.3) node [right=1pt] {\small{$\nu$}};
\draw (0.3,-0.3) -- (0,-0.6) node [below=-1pt] {\tiny{2}};
\draw (0.3,-0.3) -- (0.6,-0.6) node [below=-1pt] {\tiny{3}};
\end{tikzpicture}) \\
& = - \begin{tikzpicture}
\draw (0,0.4) -- (0,0) node [right=1pt] {\small{$\alpha$}};
\draw (0,0) -- (-0.3,-0.3) node [right=1pt] {\small{$\beta$}};
\draw (0,0) -- (0.3,-0.3) node [below=-1pt] {\tiny{1}};
\draw (-0.3,-0.3) -- (-0.6,-0.6) node [below=-1pt] {\tiny{2}};
\draw (-0.3,-0.3) -- (0,-0.6) node [below=-1pt] {\tiny{3}};
\end{tikzpicture} (\begin{tikzpicture}
\draw (0,0.4) -- (0,0) node [right=1pt] {\small{$\mu$}};
\draw (0,0) -- (-0.3,-0.3) node [right=1pt] {\small{$\nu$}};
\draw (0,0) -- (0.3,-0.3) node [below=-1pt] {\tiny{1}};
\draw (-0.3,-0.3) -- (-0.6,-0.6) node [below=-1pt] {\tiny{2}};
\draw (-0.3,-0.3) -- (0,-0.6) node [below=-1pt] {\tiny{3}};
\end{tikzpicture}) = \mp \alpha(\mu) \beta(\nu) 
\end{align*}
With $\alpha, \beta \in E^\vee$, $\nu, \mu \in E$. 
We use the following convention to identify $Free(E^\vee)(3)$ and $Free(E)^\vee(3)$ (since we only look at the complement of $R$ in $Free(E^\vee)(3)$, this will be enough): 
$$\begin{tikzpicture}
\draw (0,0.4) -- (0,0) node [right=1pt] {\small{$\alpha$}};
\draw (0,0) -- (-0.3,-0.3) node [right=1pt] {\small{$\beta$}};
\draw (0,0) -- (0.3,-0.3) node [below=-1pt] {\tiny{$\sigma(3)$}};
\draw (-0.3,-0.3) -- (-0.6,-0.6) node [below=-1pt] {\tiny{$\sigma(1)$}};
\draw (-0.3,-0.3) -- (0,-0.6) node [below=-1pt] {\tiny{$\sigma(2)$}};
\end{tikzpicture} (\begin{tikzpicture}
\draw (0,0.4) -- (0,0) node [right=1pt] {\small{$\mu$}};
\draw (0,0) -- (-0.3,-0.3) node [right=1pt] {\small{$\nu$}};
\draw (0,0) -- (0.3,-0.3) node [below=-1pt] {\tiny{$\sigma(3)$}};
\draw (-0.3,-0.3) -- (-0.6,-0.6) node [below=-1pt] {\tiny{$\sigma(1)$}};
\draw (-0.3,-0.3) -- (0,-0.6) node [below=-1pt] {\tiny{$\sigma(2)$}};
\end{tikzpicture}) = sgn(\sigma) \alpha(\mu) \beta(\nu) $$
$$ \begin{tikzpicture}
\draw (0,0.4) -- (0,0) node [right=1pt] {\small{$\alpha$}};
\draw (0,0) -- (-0.3,-0.3) node [below=-1pt] {\tiny{$\sigma(1)$}};
\draw (0,0) -- (0.3,-0.3) node [right=1pt] {\small{$\beta$}};
\draw (0.3,-0.3) -- (0,-0.6) node [below=-1pt] {\tiny{$\sigma(2)$}};
\draw (0.3,-0.3) -- (0.6,-0.6) node [below=-1pt] {\tiny{$\sigma(3)$}};
\end{tikzpicture} (\begin{tikzpicture}
\draw (0,0.4) -- (0,0) node [right=1pt] {\small{$\mu$}};
\draw (0,0) -- (-0.3,-0.3) node [below=-1pt] {\tiny{$\sigma(1)$}};
\draw (0,0) -- (0.3,-0.3) node [right=1pt] {\small{$\nu$}};
\draw (0.3,-0.3) -- (0,-0.6) node [below=-1pt] {\tiny{$\sigma(2)$}};
\draw (0.3,-0.3) -- (0.6,-0.6) node [below=-1pt] {\tiny{$\sigma(3)$}};
\end{tikzpicture}) = - sgn(\sigma) \alpha(\mu) \beta(\nu) $$

With this identification of $Free(E^\vee)(3)$ and $Free(E)^\vee(3)$, we can consider $Ann(R) \subset Free(E^\vee)(3)$, and the following definition now makes sense: 

\begin{definition} Given a quadratic operad $\mP=\mQ(E,R)$, we define its \textbf{quadratic dual}: $\mP^!:=\mQ(E^\vee,Ann(R))$
\end{definition}

\begin{remark} Because $(E^\vee)^\vee \cong E$ and $Ann(Ann(R)) \cong R$ (for finite dimensional vector spaces), $(\mP^!)^! \cong \mP$
\end{remark}

\begin{theorem} As operads, $\mAss^! \cong \mAss$, $\mCom^! \cong \mLie$, $\mLie^! \cong \mCom$ and $\mPoiss^! \cong \mPoiss$
\end{theorem}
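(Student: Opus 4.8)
The plan is to exploit the fact that a quadratic operad is completely determined by its arity-$3$ data: since $\mP^!=\mQ(E^\vee,Ann(R))$ with $Ann(R)\subset Free(E^\vee)(3)$, to identify $\mP^!$ with a target quadratic operad $\mQ(E',R')$ it suffices to produce an $S_2$-isomorphism $E^\vee\cong E'$ that carries $Ann(R)$ onto $R'$ inside $Free(E^\vee)(3)\cong Free(E')(3)$; such an isomorphism of generators-with-relations induces an isomorphism of the free operads taking one ideal to the other, hence of the quotients. Thus the whole theorem reduces to finite-dimensional linear algebra in arity $3$, carried out with the sign-twisted pairing fixed just before the definition of the quadratic dual. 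I would also first cut down the number of cases: by the remark that $(\mP^!)^!\cong\mP$, the statement $\mLie^!\cong\mCom$ follows from $\mCom^!\cong\mLie$ by dualizing, so it is enough to treat $\mAss$, $\mCom$, and $\mPoiss$.

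Second, I would compute the Czech duals of the generators, which already pins down the generating $S_2$-module of each dual. For $\mAss$ the generator is the regular representation $k[S_2]$, and $k[S_2]^\vee\cong k[S_2]$, so the generator is preserved. For $\mCom$ the generator is the trivial representation $\mathbf{1}$, and $\mathbf{1}^\vee=\mathbf{1}^*\otimes sgn\cong sgn$, which is exactly the anticommutative generator of $\mLie$; dually $sgn^\vee\cong\mathbf{1}$. For $\mPoiss$ the generator is $\mathbf{1}\oplus sgn$, whose Czech dual is $sgn\oplus\mathbf{1}\cong\mathbf{1}\oplus sgn$, so the generating space is preserved but with the commutative and Lie generators exchanging type. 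This type-exchange is the candidate isomorphism $E^\vee\cong E$ that I will then test against the relations.

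Third, I would identify the relations by computing $Ann(R)$ explicitly. I would fix the standard bases of the arity-$3$ free operads, using $\dim Free(\mathbf{1})(3)=\dim Free(sgn)(3)=3$, $\dim Free(k[S_2])(3)=12$, and $\dim Free(\mathbf{1}\oplus sgn)(3)=12$, write $R$ in these bases, and take the orthogonal complement under the $\pm sgn(\sigma)$ pairing. For $\mAss$ this shows the annihilator of the six associativity relations is again a copy of the associativity relations. For $\mCom$, the two-dimensional associativity relation in $Free(\mathbf{1})(3)$ has a one-dimensional annihilator in $Free(sgn)(3)$, and here the sign twist forces that annihilator to be the \emph{symmetric} combination of the three bracketings, i.e.\ precisely the Jacobi element spanning the relations of $\mLie$. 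For $\mPoiss$ the relation space splits as commutative associativity, Jacobi, and the three Leibniz relations; under the type-exchange of the previous paragraph the annihilators of the first two pieces match (the $\mCom/\mLie$ computation played in both directions), and the remaining task is to show that the annihilator of the Leibniz relations is again Leibniz.

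The main obstacle is exactly this last verification for $\mPoiss$. The Leibniz relations mix the two ``mixed'' sectors of $Free(\mathbf{1}\oplus sgn)(3)$, namely a commutative vertex over a Lie vertex and a Lie vertex over a commutative vertex, and the generator-exchange isomorphism swaps these two sectors; so the self-duality of Leibniz is not formal and has to be extracted from an explicit, sign-sensitive evaluation of the pairing on a chosen basis. A dimension count ($\dim\mPoiss(3)=6$, hence $\dim Ann(R)=12-6=6$) confirms that the candidate answer has the right size, but it cannot by itself establish the isomorphism; the decisive step is matching the Leibniz relations to their own annihilator, with every sign accounted for by the $sgn(\sigma)$ convention.
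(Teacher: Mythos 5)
Your proposal is correct and follows essentially the same route as the paper's proof: identify the Czech duals of the generators, reduce everything to explicit arity-$3$ linear algebra under the sign-twisted pairing, obtain $\mLie^!\cong\mCom$ from $\mCom^!\cong\mLie$ via $(\mP^!)^!\cong\mP$, and for $\mPoiss$ observe that the pure commutative/Lie sectors are settled by the $\mCom$/$\mLie$ computation so that only the mixed (Leibniz) sector needs a direct, sign-sensitive annihilator calculation. The paper disposes of that last step with the same "some linear algebra" you correctly flag as the decisive verification.
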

\begin{proof} We start with $\mAss^!$. We see that $(12) \circ \tree^*=-\twisttree^*$, so as generating elements we choose $\tree^*$ and $-\twisttree^*$. Let's look at $Free(\tree,\twisttree)(3)$. It's 12-dimensional, and it's spanned by all six $\sigmaltree$ and all six $\sigmartree$ (recall that all $\twisttree$'s could be eliminated). Our set of relations $R$ is six-dimensional, and spanned by all six $\sigmaltree - \sigmartree$, so we see that $Ann(R) \subset (Free(\tree,\twisttree)(3))^*$ is spanned by all $\sigmaltree^*$ + $\sigmartree^*$. $Ann(R) \subset (Free(\tree,\twisttree)(3))^\vee$ is also spanned by these trees (the sign representation only adds a potential global minus sign here). But as we've seen above, $\sigmartree^* = -sgn(\sigma) \begin{tikzpicture}
\draw (0,0.25) -- (0,0) node [right=-1pt] {\tiny{*}};
\draw (0,0) -- (-0.2,-0.2) node [below left=-3pt and -8pt] {\tiny{$\sigma(1)$}};
\draw (0,0) -- (0.2,-0.2) node [right=-1pt] {\tiny{*}};
\draw (0.2,-0.2) -- (0,-0.4) node [below left=-2pt and -8pt] {\tiny{$\sigma(2)$}};
\draw (0.2,-0.2) -- (0.4,-0.4) node [below right=-2pt and -8pt] {\tiny{$\sigma(3)$}};
\end{tikzpicture}$ while $\sigmaltree^* = sgn(\sigma) \begin{tikzpicture}
\draw (0,0.25) -- (0,0) node [right=-1pt] {\tiny{*}};
\draw (0,0) -- (-0.2,-0.2) node [right=-1pt] {\tiny{*}};
\draw (0,0) -- (0.2,-0.2) node [below right=-3pt and -8pt] {\tiny{$\sigma(3)$}};
\draw (-0.2,-0.2) -- (-0.4,-0.4) node [below left=-2pt and -8pt] {\tiny{$\sigma(1)$}};
\draw (-0.2,-0.2) -- (0,-0.4) node [below right=-2pt and -8pt] {\tiny{$\sigma(2)$}};
\end{tikzpicture}$ so in $Free({\tree,\twisttree}^\vee)(3)$, $Ann(R)$ is spanned by $\begin{tikzpicture}
\draw (0,0.25) -- (0,0) node [right=-1pt] {\tiny{*}};
\draw (0,0) -- (-0.2,-0.2) node [right=-1pt] {\tiny{*}};
\draw (0,0) -- (0.2,-0.2) node [below right=-3pt and -8pt] {\tiny{$\sigma(3)$}};
\draw (-0.2,-0.2) -- (-0.4,-0.4) node [below left=-2pt and -8pt] {\tiny{$\sigma(1)$}};
\draw (-0.2,-0.2) -- (0,-0.4) node [below right=-2pt and -8pt] {\tiny{$\sigma(2)$}};
\end{tikzpicture} - \begin{tikzpicture}
\draw (0,0.25) -- (0,0) node [right=-1pt] {\tiny{*}};
\draw (0,0) -- (-0.2,-0.2) node [below left=-3pt and -8pt] {\tiny{$\sigma(1)$}};
\draw (0,0) -- (0.2,-0.2) node [right=-1pt] {\tiny{*}};
\draw (0.2,-0.2) -- (0,-0.4) node [below left=-2pt and -8pt] {\tiny{$\sigma(2)$}};
\draw (0.2,-0.2) -- (0.4,-0.4) node [below right=-2pt and -8pt] {\tiny{$\sigma(3)$}};
\end{tikzpicture}$. This precisely shows that $\mAss^! = \mQ( \tree^*,-\twisttree^* ; \begin{tikzpicture}
\draw (0,0.25) -- (0,0) node [right=-1pt] {\tiny{*}};
\draw (0,0) -- (-0.2,-0.2) node [right=-1pt] {\tiny{*}};
\draw (0,0) -- (0.2,-0.2);
\draw (-0.2,-0.2) -- (-0.4,-0.4);
\draw (-0.2,-0.2) -- (0,-0.4);
\end{tikzpicture} - \begin{tikzpicture}
\draw (0,0.25) -- (0,0) node [right=-1pt] {\tiny{*}};
\draw (0,0) -- (-0.2,-0.2);
\draw (0,0) -- (0.2,-0.2) node [right=-1pt] {\tiny{*}};
\draw (0.2,-0.2) -- (0,-0.4);
\draw (0.2,-0.2) -- (0.4,-0.4);
\end{tikzpicture} ) \cong \mAss$. 
 
Next we need to show that $\mComm$ and $\mLie$ are each others' dual. Note that the sign representation indeed makes $\comtree^*$ into an anticommutative element ($(12) \circ \commtree^* = - \commtree$) and makes our Lie element $\tree$ into a commutative element. So we need to show that the associative relation and the Jacobi identity are translated into each other. We first note that because of the (anti-)commutativity, both $Free(\comtree)(3)$ and $Free(\tree)(3)$ are three-dimensional, spanned respectively by \begin{tikzpicture}
\draw (0,0.25) -- (0,0);
\draw (0,0) -- (-0.2,-0.2);
\draw (0,0) -- (0.2,-0.2) node [below=-1pt] {\tiny{3}};
\draw (-0.2,-0.2) -- (-0.4,-0.4) node [below=-1pt] {\tiny{1}};
\draw (-0.2,-0.2) -- (0,-0.4) node [below=-1pt] {\tiny{2}};
\draw (0,0) circle (1pt) [fill=white];
\draw (-0.2,-0.2) circle (1pt) [fill=white];
\end{tikzpicture}, \begin{tikzpicture}
\draw (0,0.25) -- (0,0);
\draw (0,0) -- (-0.2,-0.2);
\draw (0,0) -- (0.2,-0.2) node [below=-1pt] {\tiny{1}};
\draw (-0.2,-0.2) -- (-0.4,-0.4) node [below=-1pt] {\tiny{2}};
\draw (-0.2,-0.2) -- (0,-0.4) node [below=-1pt] {\tiny{3}};
\draw (0,0) circle (1pt) [fill=white];
\draw (-0.2,-0.2) circle (1pt) [fill=white];
\end{tikzpicture}, \begin{tikzpicture}
\draw (0,0.25) -- (0,0);
\draw (0,0) -- (-0.2,-0.2);
\draw (0,0) -- (0.2,-0.2) node [below=-1pt] {\tiny{2}};
\draw (-0.2,-0.2) -- (-0.4,-0.4) node [below=-1pt] {\tiny{3}};
\draw (-0.2,-0.2) -- (0,-0.4) node [below=-1pt] {\tiny{1}};
\draw (0,0) circle (1pt) [fill=white];
\draw (-0.2,-0.2) circle (1pt) [fill=white];
\end{tikzpicture} and by \begin{tikzpicture}
\draw (0,0.25) -- (0,0);
\draw (0,0) -- (-0.2,-0.2);
\draw (0,0) -- (0.2,-0.2) node [below=-1pt] {\tiny{3}};
\draw (-0.2,-0.2) -- (-0.4,-0.4) node [below=-1pt] {\tiny{1}};
\draw (-0.2,-0.2) -- (0,-0.4) node [below=-1pt] {\tiny{2}};
\end{tikzpicture}, \begin{tikzpicture}
\draw (0,0.25) -- (0,0);
\draw (0,0) -- (-0.2,-0.2);
\draw (0,0) -- (0.2,-0.2) node [below=-1pt] {\tiny{1}};
\draw (-0.2,-0.2) -- (-0.4,-0.4) node [below=-1pt] {\tiny{2}};
\draw (-0.2,-0.2) -- (0,-0.4) node [below=-1pt] {\tiny{3}};
\end{tikzpicture}, \begin{tikzpicture}
\draw (0,0.25) -- (0,0);
\draw (0,0) -- (-0.2,-0.2);
\draw (0,0) -- (0.2,-0.2) node [below=-1pt] {\tiny{2}};
\draw (-0.2,-0.2) -- (-0.4,-0.4) node [below=-1pt] {\tiny{3}};
\draw (-0.2,-0.2) -- (0,-0.4) node [below=-1pt] {\tiny{1}};
\end{tikzpicture}. In $Free(\comtree)(3)$, the set of relations $R$ is spanned by $\begin{tikzpicture}
\draw (0,0.25) -- (0,0);
\draw (0,0) -- (-0.2,-0.2);
\draw (0,0) -- (0.2,-0.2) node [below=-1pt] {\tiny{3}};
\draw (-0.2,-0.2) -- (-0.4,-0.4) node [below=-1pt] {\tiny{1}};
\draw (-0.2,-0.2) -- (0,-0.4) node [below=-1pt] {\tiny{2}};
\draw (0,0) circle (1pt) [fill=white];
\draw (-0.2,-0.2) circle (1pt) [fill=white];
\end{tikzpicture} - \begin{tikzpicture}
\draw (0,0.25) -- (0,0);
\draw (0,0) -- (-0.2,-0.2) node [below=-1pt] {\tiny{1}};
\draw (0,0) -- (0.2,-0.2);
\draw (0.2,-0.2) -- (0,-0.4) node [below=-1pt] {\tiny{2}};
\draw (0.2,-0.2) -- (0.4,-0.4) node [below=-1pt] {\tiny{3}};
\draw (0,0) circle (1pt) [fill=white];
\draw (0.2,-0.2) circle (1pt) [fill=white];
\end{tikzpicture}$ and $\begin{tikzpicture}
\draw (0,0.25) -- (0,0);
\draw (0,0) -- (-0.2,-0.2);
\draw (0,0) -- (0.2,-0.2) node [below=-1pt] {\tiny{1}};
\draw (-0.2,-0.2) -- (-0.4,-0.4) node [below=-1pt] {\tiny{2}};
\draw (-0.2,-0.2) -- (0,-0.4) node [below=-1pt] {\tiny{3}};
\draw (0,0) circle (1pt) [fill=white];
\draw (-0.2,-0.2) circle (1pt) [fill=white];
\end{tikzpicture} - \begin{tikzpicture}
\draw (0,0.25) -- (0,0);
\draw (0,0) -- (-0.2,-0.2) node [below=-1pt] {\tiny{2}};
\draw (0,0) -- (0.2,-0.2);
\draw (0.2,-0.2) -- (0,-0.4) node [below=-1pt] {\tiny{3}};
\draw (0.2,-0.2) -- (0.4,-0.4) node [below=-1pt] {\tiny{1}};
\draw (0,0) circle (1pt) [fill=white];
\draw (0.2,-0.2) circle (1pt) [fill=white];
\end{tikzpicture}$ (because $\begin{tikzpicture}
\draw (0,0.25) -- (0,0);
\draw (0,0) -- (-0.2,-0.2);
\draw (0,0) -- (0.2,-0.2) node [below=-1pt] {\tiny{2}};
\draw (-0.2,-0.2) -- (-0.4,-0.4) node [below=-1pt] {\tiny{3}};
\draw (-0.2,-0.2) -- (0,-0.4) node [below=-1pt] {\tiny{1}};
\draw (0,0) circle (1pt) [fill=white];
\draw (-0.2,-0.2) circle (1pt) [fill=white];
\end{tikzpicture} - \begin{tikzpicture}
\draw (0,0.25) -- (0,0);
\draw (0,0) -- (-0.2,-0.2) node [below=-1pt] {\tiny{3}};
\draw (0,0) -- (0.2,-0.2);
\draw (0.2,-0.2) -- (0,-0.4) node [below=-1pt] {\tiny{1}};
\draw (0.2,-0.2) -- (0.4,-0.4) node [below=-1pt] {\tiny{2}};
\draw (0,0) circle (1pt) [fill=white];
\draw (0.2,-0.2) circle (1pt) [fill=white];
\end{tikzpicture} = - (\begin{tikzpicture}
\draw (0,0.25) -- (0,0);
\draw (0,0) -- (-0.2,-0.2);
\draw (0,0) -- (0.2,-0.2) node [below=-1pt] {\tiny{3}};
\draw (-0.2,-0.2) -- (-0.4,-0.4) node [below=-1pt] {\tiny{1}};
\draw (-0.2,-0.2) -- (0,-0.4) node [below=-1pt] {\tiny{2}};
\draw (0,0) circle (1pt) [fill=white];
\draw (-0.2,-0.2) circle (1pt) [fill=white];
\end{tikzpicture} - \begin{tikzpicture}
\draw (0,0.25) -- (0,0);
\draw (0,0) -- (-0.2,-0.2) node [below=-1pt] {\tiny{1}};
\draw (0,0) -- (0.2,-0.2);
\draw (0.2,-0.2) -- (0,-0.4) node [below=-1pt] {\tiny{2}};
\draw (0.2,-0.2) -- (0.4,-0.4) node [below=-1pt] {\tiny{3}};
\draw (0,0) circle (1pt) [fill=white];
\draw (0.2,-0.2) circle (1pt) [fill=white];
\end{tikzpicture}) - (\begin{tikzpicture}
\draw (0,0.25) -- (0,0);
\draw (0,0) -- (-0.2,-0.2);
\draw (0,0) -- (0.2,-0.2) node [below=-1pt] {\tiny{1}};
\draw (-0.2,-0.2) -- (-0.4,-0.4) node [below=-1pt] {\tiny{2}};
\draw (-0.2,-0.2) -- (0,-0.4) node [below=-1pt] {\tiny{3}};
\draw (0,0) circle (1pt) [fill=white];
\draw (-0.2,-0.2) circle (1pt) [fill=white];
\end{tikzpicture} - \begin{tikzpicture}
\draw (0,0.25) -- (0,0);
\draw (0,0) -- (-0.2,-0.2) node [below=-1pt] {\tiny{2}};
\draw (0,0) -- (0.2,-0.2);
\draw (0.2,-0.2) -- (0,-0.4) node [below=-1pt] {\tiny{3}};
\draw (0.2,-0.2) -- (0.4,-0.4) node [below=-1pt] {\tiny{1}};
\draw (0,0) circle (1pt) [fill=white];
\draw (0.2,-0.2) circle (1pt) [fill=white];
\end{tikzpicture})$) or, in terms of the basis we mentioned above, by $\begin{tikzpicture}
\draw (0,0.25) -- (0,0);
\draw (0,0) -- (-0.2,-0.2);
\draw (0,0) -- (0.2,-0.2) node [below=-1pt] {\tiny{3}};
\draw (-0.2,-0.2) -- (-0.4,-0.4) node [below=-1pt] {\tiny{1}};
\draw (-0.2,-0.2) -- (0,-0.4) node [below=-1pt] {\tiny{2}};
\draw (0,0) circle (1pt) [fill=white];
\draw (-0.2,-0.2) circle (1pt) [fill=white];
\end{tikzpicture} - \begin{tikzpicture}
\draw (0,0.25) -- (0,0);
\draw (0,0) -- (-0.2,-0.2);
\draw (0,0) -- (0.2,-0.2) node [below=-1pt] {\tiny{1}};
\draw (-0.2,-0.2) -- (-0.4,-0.4) node [below=-1pt] {\tiny{2}};
\draw (-0.2,-0.2) -- (0,-0.4) node [below=-1pt] {\tiny{3}};
\draw (0,0) circle (1pt) [fill=white];
\draw (-0.2,-0.2) circle (1pt) [fill=white];
\end{tikzpicture}$ and $\begin{tikzpicture}
\draw (0,0.25) -- (0,0);
\draw (0,0) -- (-0.2,-0.2);
\draw (0,0) -- (0.2,-0.2) node [below=-1pt] {\tiny{1}};
\draw (-0.2,-0.2) -- (-0.4,-0.4) node [below=-1pt] {\tiny{2}};
\draw (-0.2,-0.2) -- (0,-0.4) node [below=-1pt] {\tiny{3}};
\draw (0,0) circle (1pt) [fill=white];
\draw (-0.2,-0.2) circle (1pt) [fill=white];
\end{tikzpicture} - \begin{tikzpicture}
\draw (0,0.25) -- (0,0);
\draw (0,0) -- (-0.2,-0.2);
\draw (0,0) -- (0.2,-0.2) node [below=-1pt] {\tiny{2}};
\draw (-0.2,-0.2) -- (-0.4,-0.4) node [below=-1pt] {\tiny{3}};
\draw (-0.2,-0.2) -- (0,-0.4) node [below=-1pt] {\tiny{1}};
\draw (0,0) circle (1pt) [fill=white];
\draw (-0.2,-0.2) circle (1pt) [fill=white];
\end{tikzpicture}$. Its annihilator in $Free(\comtree)(3)^*$ is $\begin{tikzpicture}
\draw (0,0.25) -- (0,0);
\draw (0,0) -- (-0.2,-0.2);
\draw (0,0) -- (0.2,-0.2) node [below=-1pt] {\tiny{3}};
\draw (-0.2,-0.2) -- (-0.4,-0.4) node [below=-1pt] {\tiny{1}};
\draw (-0.2,-0.2) -- (0,-0.4) node [below=-1pt] {\tiny{2}};
\draw (0,0) circle (1pt) [fill=white];
\draw (-0.2,-0.2) circle (1pt) [fill=white];
\end{tikzpicture}^* + \begin{tikzpicture}
\draw (0,0.25) -- (0,0);
\draw (0,0) -- (-0.2,-0.2);
\draw (0,0) -- (0.2,-0.2) node [below=-1pt] {\tiny{1}};
\draw (-0.2,-0.2) -- (-0.4,-0.4) node [below=-1pt] {\tiny{2}};
\draw (-0.2,-0.2) -- (0,-0.4) node [below=-1pt] {\tiny{3}};
\draw (0,0) circle (1pt) [fill=white];
\draw (-0.2,-0.2) circle (1pt) [fill=white];
\end{tikzpicture}^* + \begin{tikzpicture}
\draw (0,0.25) -- (0,0);
\draw (0,0) -- (-0.2,-0.2);
\draw (0,0) -- (0.2,-0.2) node [below=-1pt] {\tiny{2}};
\draw (-0.2,-0.2) -- (-0.4,-0.4) node [below=-1pt] {\tiny{3}};
\draw (-0.2,-0.2) -- (0,-0.4) node [below=-1pt] {\tiny{1}};
\draw (0,0) circle (1pt) [fill=white];
\draw (-0.2,-0.2) circle (1pt) [fill=white];
\end{tikzpicture}^*$, which is exactly the Jacobi relation. We know that this translates into $\begin{tikzpicture}
\draw (0,0.25) -- (0,0) node [right=-1pt] {\tiny{*}};
\draw (0,0) -- (-0.2,-0.2) node [right=-1pt] {\tiny{*}};
\draw (0,0) -- (0.2,-0.2) node [below=-1pt] {\tiny{3}};
\draw (-0.2,-0.2) -- (-0.4,-0.4) node [below=-1pt] {\tiny{1}};
\draw (-0.2,-0.2) -- (0,-0.4) node [below=-1pt] {\tiny{2}};
\draw (0,0) circle (1pt) [fill=white];
\draw (-0.2,-0.2) circle (1pt) [fill=white];
\end{tikzpicture} + \begin{tikzpicture}
\draw (0,0.25) -- (0,0) node [right=-1pt] {\tiny{*}};
\draw (0,0) -- (-0.2,-0.2) node [right=-1pt] {\tiny{*}};
\draw (0,0) -- (0.2,-0.2) node [below=-1pt] {\tiny{1}};
\draw (-0.2,-0.2) -- (-0.4,-0.4) node [below=-1pt] {\tiny{2}};
\draw (-0.2,-0.2) -- (0,-0.4) node [below=-1pt] {\tiny{3}};
\draw (0,0) circle (1pt) [fill=white];
\draw (-0.2,-0.2) circle (1pt) [fill=white];
\end{tikzpicture} + \begin{tikzpicture}
\draw (0,0.25) -- (0,0) node [right=-1pt] {\tiny{*}};
\draw (0,0) -- (-0.2,-0.2) node [right=-1pt] {\tiny{*}};
\draw (0,0) -- (0.2,-0.2) node [below=-1pt] {\tiny{2}};
\draw (-0.2,-0.2) -- (-0.4,-0.4) node [below=-1pt] {\tiny{3}};
\draw (-0.2,-0.2) -- (0,-0.4) node [below=-1pt] {\tiny{1}};
\draw (0,0) circle (1pt) [fill=white];
\draw (-0.2,-0.2) circle (1pt) [fill=white];
\end{tikzpicture}$ in $Free(\commtree^\vee)(3)$ so indeed, $\mComm^! \cong \mLie$.\footnote{In this case we don't have any troubles with $\sigmaltree^* = sgn(\sigma) \begin{tikzpicture}
\draw (0,0.25) -- (0,0) node [right=-1pt] {\tiny{*}};
\draw (0,0) -- (-0.2,-0.2) node [right=-1pt] {\tiny{*}};
\draw (0,0) -- (0.2,-0.2) node [below right=-3pt and -8pt] {\tiny{$\sigma(3)$}};
\draw (-0.2,-0.2) -- (-0.4,-0.4) node [below left=-2pt and -8pt] {\tiny{$\sigma(1)$}};
\draw (-0.2,-0.2) -- (0,-0.4) node [below right=-2pt and -8pt] {\tiny{$\sigma(2)$}};
\end{tikzpicture}$: the relation just implies that we add a minus sign before half of the trees, which does not add extra information to our operad.} And as we've seen above, $(\mP^!)^! \cong \mP$, so $\mLie^! \cong (\mComm^!)^! \cong \mComm$. 

Finally, we need to show that $\mPoiss$ is self-dual. We have already seen above that the $\comtree^*$ becomes a Lie-element and vice versa, so we just need to show that the distributive relation translates into the same distributive relation for $\comtree^*$ and $\tree^*$. Let us again look at $Free( \{\comtree, \tree\})(3)$, but this time only at elements consisting of one $\comtree$ and one $\tree$. It's six-dimensional, spanned by 
$\begin{tikzpicture}
\draw (0,0.25) -- (0,0);
\draw (0,0) -- (-0.2,-0.2);
\draw (0,0) -- (0.2,-0.2) node [below=-1pt] {\tiny{3}};
\draw (-0.2,-0.2) -- (-0.4,-0.4) node [below=-1pt] {\tiny{1}};
\draw (-0.2,-0.2) -- (0,-0.4) node [below=-1pt] {\tiny{2}};
\draw (0,0) circle (1pt) [fill=white];
\end{tikzpicture}, \begin{tikzpicture}
\draw (0,0.25) -- (0,0);
\draw (0,0) -- (-0.2,-0.2);
\draw (0,0) -- (0.2,-0.2) node [below=-1pt] {\tiny{1}};
\draw (-0.2,-0.2) -- (-0.4,-0.4) node [below=-1pt] {\tiny{2}};
\draw (-0.2,-0.2) -- (0,-0.4) node [below=-1pt] {\tiny{3}};
\draw (0,0) circle (1pt) [fill=white];
\end{tikzpicture}, \begin{tikzpicture}
\draw (0,0.25) -- (0,0);
\draw (0,0) -- (-0.2,-0.2);
\draw (0,0) -- (0.2,-0.2) node [below=-1pt] {\tiny{2}};
\draw (-0.2,-0.2) -- (-0.4,-0.4) node [below=-1pt] {\tiny{3}};
\draw (-0.2,-0.2) -- (0,-0.4) node [below=-1pt] {\tiny{1}};
\draw (0,0) circle (1pt) [fill=white];
\end{tikzpicture}$ 
and 
$\begin{tikzpicture}
\draw (0,0.25) -- (0,0);
\draw (0,0) -- (-0.2,-0.2);
\draw (0,0) -- (0.2,-0.2) node [below=-1pt] {\tiny{3}};
\draw (-0.2,-0.2) -- (-0.4,-0.4) node [below=-1pt] {\tiny{1}};
\draw (-0.2,-0.2) -- (0,-0.4) node [below=-1pt] {\tiny{2}};
\draw (-0.2,-0.2) circle (1pt) [fill=white];
\end{tikzpicture}, \begin{tikzpicture}
\draw (0,0.25) -- (0,0);
\draw (0,0) -- (-0.2,-0.2);
\draw (0,0) -- (0.2,-0.2) node [below=-1pt] {\tiny{1}};
\draw (-0.2,-0.2) -- (-0.4,-0.4) node [below=-1pt] {\tiny{2}};
\draw (-0.2,-0.2) -- (0,-0.4) node [below=-1pt] {\tiny{3}};
\draw (-0.2,-0.2) circle (1pt) [fill=white];
\end{tikzpicture}, \begin{tikzpicture}
\draw (0,0.25) -- (0,0);
\draw (0,0) -- (-0.2,-0.2);
\draw (0,0) -- (0.2,-0.2) node [below=-1pt] {\tiny{2}};
\draw (-0.2,-0.2) -- (-0.4,-0.4) node [below=-1pt] {\tiny{3}};
\draw (-0.2,-0.2) -- (0,-0.4) node [below=-1pt] {\tiny{1}};
\draw (-0.2,-0.2) circle (1pt) [fill=white];
\end{tikzpicture} $. The set of relations is three-dimensional, and in terms of this basis it is spanned by 
$\begin{tikzpicture}
\draw (0,0.25) -- (0,0);
\draw (0,0) -- (-0.2,-0.2);
\draw (0,0) -- (0.2,-0.2) node [below=-1pt] {\tiny{1}};
\draw (-0.2,-0.2) -- (-0.4,-0.4) node [below=-1pt] {\tiny{2}};
\draw (-0.2,-0.2) -- (0,-0.4) node [below=-1pt] {\tiny{3}};
\draw (-0.2,-0.2) circle (1pt) [fill=white];
\end{tikzpicture} - \begin{tikzpicture}
\draw (0,0.25) -- (0,0);
\draw (0,0) -- (-0.2,-0.2);
\draw (0,0) -- (0.2,-0.2) node [below=-1pt] {\tiny{2}};
\draw (-0.2,-0.2) -- (-0.4,-0.4) node [below=-1pt] {\tiny{3}};
\draw (-0.2,-0.2) -- (0,-0.4) node [below=-1pt] {\tiny{1}};
\draw (0,0) circle (1pt) [fill=white];
\end{tikzpicture} + \begin{tikzpicture}
\draw (0,0.25) -- (0,0);
\draw (0,0) -- (-0.2,-0.2);
\draw (0,0) -- (0.2,-0.2) node [below=-1pt] {\tiny{3}};
\draw (-0.2,-0.2) -- (-0.4,-0.4) node [below=-1pt] {\tiny{1}};
\draw (-0.2,-0.2) -- (0,-0.4) node [below=-1pt] {\tiny{2}};
\draw (0,0) circle (1pt) [fill=white];
\end{tikzpicture}$ and its cyclic permutations. Some linear algebra shows us that $Ann(R)$ is exactly spanned by 
$\begin{tikzpicture}
\draw (0,0.25) -- (0,0) node [right=-1pt] {\tiny{*}};
\draw (0,0) -- (-0.2,-0.2) node [right=-1pt] {\tiny{*}};
\draw (0,0) -- (0.2,-0.2) node [below=-1pt] {\tiny{1}};
\draw (-0.2,-0.2) -- (-0.4,-0.4) node [below=-1pt] {\tiny{2}};
\draw (-0.2,-0.2) -- (0,-0.4) node [below=-1pt] {\tiny{3}};
\draw (0,0) circle (1pt) [fill=white];
\end{tikzpicture} - \begin{tikzpicture}
\draw (0,0.25) -- (0,0) node [right=-1pt] {\tiny{*}};
\draw (0,0) -- (-0.2,-0.2) node [right=-1pt] {\tiny{*}};
\draw (0,0) -- (0.2,-0.2) node [below=-1pt] {\tiny{2}};
\draw (-0.2,-0.2) -- (-0.4,-0.4) node [below=-1pt] {\tiny{3}};
\draw (-0.2,-0.2) -- (0,-0.4) node [below=-1pt] {\tiny{1}};
\draw (-0.2,-0.2) circle (1pt) [fill=white];
\end{tikzpicture} + \begin{tikzpicture}
\draw (0,0.25) -- (0,0) node [right=-1pt] {\tiny{*}};
\draw (0,0) -- (-0.2,-0.2) node [right=-1pt] {\tiny{*}};
\draw (0,0) -- (0.2,-0.2) node [below=-1pt] {\tiny{3}};
\draw (-0.2,-0.2) -- (-0.4,-0.4) node [below=-1pt] {\tiny{1}};
\draw (-0.2,-0.2) -- (0,-0.4) node [below=-1pt] {\tiny{2}};
\draw (-0.2,-0.2) circle (1pt) [fill=white];
\end{tikzpicture}$ and its cyclic permutations, so we indeed see that $Ann(R)$ again describes the same distributive relation, and $\mPoiss^! \cong \mPoiss$. 
\end{proof}

\section{Resolutions}
One way of studying an operad is by looking at a complex of free operads with a differential, such that the (co)homology of the complex gives us our original operad. In other words, we encode the relations on our operad in the differential. To define this, we first need the notion of an operadic complex: 

\begin{definition} A \textbf{dg-operad} (differential graded operad) of degree 1 is an operad, with the following extra structure:
\begin{enumerate}
\item[i)] Each $\mP(n)$ is a graded vector space, and we denote $\mP(n)^i$ for the $i$th degree of this space. This means that the composition becomes a map of graded vector spaces:
$$\circ_i:\mP(n)^j\otimes\mP(m)^k\rightarrow\mP(n+m-1)^{j+k}$$
\item[ii)] A homomorphism of operads $d:\mP(n)^i\rightarrow \mP(n)^{i+1}$ of degree one, such that $d^2=0$. We demand that $d$ satisfies the Leibniz rule:
$$d(\nu\circ_i\mu)=d(\nu)\circ_i\mu+(-1)^j\nu\circ_i d(\mu)$$
where $\nu\in\mP(n)^j$
\end{enumerate}
\end{definition}

\begin{definition} A \textbf{resolution} of an operad $\mP$ is a free dg-operad $\mQ$ such that $H^*(\mQ)\cong\mP$. 
\end{definition}

Note that we can make any operad $\mP$ into a dg-operad by putting each $\mP(n)$ in degree 0, defining $\mP(n)^i:=0$ if $i\neq 0$ and consequently defining $d=0$ as the zero map. From now on, if $V$ is a vector space, we will assume it to be in degree 0. 
But there are more interesting, and less trivial ways to make a dg-operad using an existing operad. First, some notation: if $C$ is a complex of vector spaces, then $C[i]$ is the same complex, with degree shifted by $i$: $C[i]^j:=C^{j+i}$, and $d_{C[i]}=(-1)^i d_C$. We can now construct the cobar complex:

\subsection{The cobar complex}

We start by defining $Cob(\mP):=Free(\mP(n)^*[-1]|n\geq 2) \otimes det(T) \otimes sgn$, 
where $\mP(n)^*
$ is the dual space of $\mP(n)$, $det(T)$ is a one-dimensional representation that takes care of the signs, as described below, and $sgn$ is the sign representation as before. 
Note that by the convention mentioned above, $\mP(n)[-1]^1=\mP(n)$ and $\mP(n)[-1]^i=0$ if $i\neq1$. 
So $Cob(\mP)(n)^m$ consists of trees with $m$ vertices and $n$ outputs, with each vertex ``decorated" by an element of $\mP(n)^*$, where $n$ is the number outputs of the vertex. 
For example, an element of $Cob(\mP)(5)^2$ looks like: 
$$ \begin{tikzpicture}
\draw (0,0.6) -- (0,0) node [right=1pt] {\small{$\nu$}};
\draw (0,0) -- (-0.75,-0.5);
\draw (0,0) -- (-0.35,-0.7) node [right=1pt] {\small{$\mu$}};
\draw (0,0) -- (0.25,-0.5);
\draw (0,0) -- (0.75,-0.5);
\draw (-0.35,-0.7) -- (-0.75,-1.1);
\draw (-0.35,-0.7) -- (0.05,-1.1);
\end{tikzpicture} $$
with of course $\nu \in \mP(4)^*$ and $\mu \in \mP(2)^*$. 
We remark that $Cob(\mP)(n)$ can only be non-zero in degrees $1$ to $n-1$: 
its basis elements of lowest degree look like \begin{tikzpicture}
\draw (0,0.25) -- (0,-0.00) node [above right= -4pt and -2pt] {\small{$\nu$}};
\draw (0,0) -- (-0.3,-0.2);
\draw (0,0) -- (-0.15,-0.2);
\draw (-0.10,-0.2) -- (0.25,-0.2) [dotted];
\draw (0,0) -- (0.3,-0.2);
\end{tikzpicture}, which lies in degree 1, and since an element of maximal degree has a minimal amount of outputs at each vertex (two), it is a binary tree, that lies in degree $n-1$. 
Also, please note that we now have two operadic compositions: one in our original operad $\mP$ and one in our new complex $Cob(\mP)$. We will for now denote the two compositions as $\circ^{\mP}$ and $\circ^{C}$ if it's not clear about which one we're talking. 

We now want to define a map $d:Cob(\mP)(n)^i \rightarrow Cob(\mP)(n)^{i+1}$ of degree 1 that makes the cobar resolution into a complex. The degree 1 means that it will map a tree with $i$ vertices to a tree with $i+1$ vertices, but the amount of leaves ($n$) remains the same. 
Since $d$ is demanded to be linear, we need only define it on the basis elements, the decorated trees. On such a tree, we define $d$ to be the dual of the tree-composition $\circ_T$: for $\alpha \in Cob(\mP)(n)$, $d(\alpha) (\nu)= \alpha (\circ_T \nu)$. This $\circ_T$ maps a tree decorated with elements from $\mP$ to the sum of all possible trees you get by contracting one edge: 
\begin{center}
\begin{tikzpicture}
\draw (0,0.5) -- (0,0) node [right=1pt] {\small{$\nu_3$}};
\draw (0,0) -- (-0.5,-0.5) node [right=1pt] {\small{$\nu_2$}};
\draw (0,0) -- (0,-0.5);
\draw (0,0) -- (0.5,-0.5) node [right=1pt] {\small{$\mu_2$}};
\draw (-0.5,-0.5) -- (-0.8,-0.9);
\draw (-0.5,-0.5) -- (-0.2,-0.9);
\draw (0.5,-0.5) -- (0.2,-0.9);
\draw (0.5,-0.5) -- (0.8,-0.9) node [right=1pt] {\small{$\mu_3$}};
\draw (0.8,-0.9) -- (0.4,-1.3);
\draw (0.8,-0.9) -- (0.8,-1.3);
\draw (0.8,-0.9) -- (1.2,-1.3);
\end{tikzpicture}
$\stackrel{\circ_T}{\rightarrow}$
{$\pm$} \begin{tikzpicture}
\draw (0,0.5) -- (0,0) node [right=1pt] {\small{$\nu_3 \circ^\mP_1 \nu_2$}};
\draw (0,0) -- (-0.75,-0.5);
\draw (0,0) -- (-0.25,-0.5);
\draw (0,0) -- (0.25,-0.5);
\draw (0,0) -- (0.75,-0.5) node [right=1pt] {\small{$\mu_2$}};
\draw (0.75,-0.5) -- (0.45,-0.9);
\draw (0.75,-0.5) -- (1.05,-0.9) node [right=1pt] {\small{$\mu_3$}};
\draw (1.05,-0.9) -- (0.65,-1.3);
\draw (1.05,-0.9) -- (1.05,-1.3);
\draw (1.05,-0.9) -- (1.45,-1.3);
\end{tikzpicture}
$\pm$ \begin{tikzpicture}
\draw (0,0.5) -- (0,0) node [right=1pt] {\small{$\nu_3 \circ^\mP_3 \mu_2$}};
\draw (0,0) -- (-0.75,-0.5) node [right=1pt] {\small{$\nu_2$}};
\draw (0,0) -- (-0.25,-0.5);
\draw (0,0) -- (0.25,-0.5);
\draw (0,0) -- (0.75,-0.5) node [right=1pt] {\small{$\mu_3$}};
\draw (-0.75,-0.5) -- (-1.05,-0.9);
\draw (-0.75,-0.5) -- (-0.45,-0.9);
\draw (0.75,-0.5) -- (0.35,-0.9);
\draw (0.75,-0.5) -- (0.75,-0.9);
\draw (0.75,-0.5) -- (1.15,-0.9);
\end{tikzpicture}
$\pm$ \begin{tikzpicture}
\draw (0,0.5) -- (0,0) node [right=1pt] {\small{$\nu_3$}};
\draw (0,0) -- (-0.5,-0.5) node [right=1pt] {\small{$\nu_2$}};
\draw (0,0) -- (0,-0.5);
\draw (0,0) -- (0.5,-0.5) node [right=1pt] {\small{$\mu_2 \circ^\mP_2 \mu_3$}};
\draw (-0.5,-0.5) -- (-0.8,-0.9);
\draw (-0.5,-0.5) -- (-0.2,-0.9);
\draw (0.5,-0.5) -- (0,-0.9);
\draw (0.5,-0.5) -- (0.3,-0.9);
\draw (0.5,-0.5) -- (0.7,-0.9);
\draw (0.5,-0.5) -- (1,-0.9);
\end{tikzpicture}
\end{center}
This means that $d$ will ``pull an edge out": on the most simple element, it will look like 
$$\begin{tikzpicture}
\draw (0,0.4) -- (0,0) node [right=1pt] {\small{$\nu^*$}};
\draw (0,0) -- (-0.45,-0.3);
\draw (0,0) -- (-0.25,-0.3);
\draw [scale=0.75] (-0.15,-0.3) -- (0.35,-0.3) [dotted];
\draw (0,0) -- (0.45,-0.3);
\end{tikzpicture} \stackrel{d}{\rightarrow}\sum_{\nu = \mu \circ_i \rho}
\begin{tikzpicture} [scale=1.5]
\draw (0,0.25) -- (0,0) node [above right= -4pt and -2pt] {\small{$\mu^*$}};
\draw (0,0) -- (-0.3,-0.2);
\draw (0,0) -- (-0.15,-0.2);
\draw (0,0) -- (0.2,-0.4) node [above right= -4pt and -2pt] {\small{$\rho^*$}};
\path (0.125,-0.25) node [left=-2pt] {\tiny{$i$}};
\draw (-0.075,-0.15) -- (0.1875,-0.15) [dotted];
\draw (0,0) -- (0.3,-0.2);
\draw (0.2,-0.4) -- (-0.1,-0.6);
\draw (0.2,-0.4) -- (0.05,-0.6);
\draw (0.125,-0.55) -- (0.3875,-0.55) [dotted];
\draw (0.2,-0.4) -- (0.5,-0.6);
\end{tikzpicture} $$
Note that this is indeed a map of degree $1$. 

Now, to make sure that $d^2=0$, we will need the $det(T)$ mentioned above (the $\otimes sgn$ doesn't contribute to the signs within $d$ at all, because $d$ commutes with the action of $S_n$). We illustrate this by calculating $d^2(\begin{tikzpicture}
\draw (0,0.25) -- (0,0);
\draw (0,0) -- (-0.3,-0.2);
\draw (0,0) -- (-0.1,-0.2);
\draw (0,0) -- (0.1,-0.2);
\draw (0,0) -- (0.3,-0.2);
\end{tikzpicture})$ in the associative operad 
using the graded Leibniz rule: 
\begin{align*}
\begin{tikzpicture}
\draw (0,0.25) -- (0,0);
\draw (0,0) -- (-0.3,-0.2);
\draw (0,0) -- (-0.1,-0.2);
\draw (0,0) -- (0.1,-0.2);
\draw (0,0) -- (0.3,-0.2);
\end{tikzpicture}
\stackrel{d}{\rightarrow} & \ \
\begin{tikzpicture}
\draw (0,0.25) -- (0,0);
\draw (0,0) -- (-0.2,-0.2);
\draw (0,0) -- (0,-0.2);
\draw (0,0) -- (0.2,-0.2);
\draw (-0.2,-0.2) -- (-0.4,-0.4);
\draw (-0.2,-0.2) -- (0,-0.4);
\end{tikzpicture} \ \ \ \ \, 
+ \begin{tikzpicture}
\draw (0,0.25) -- (0,0);
\draw (0,0) -- (-0.2,-0.2);
\draw (0,0) -- (0,-0.2);
\draw (0,0) -- (0.2,-0.2);
\draw (0,-0.2) -- (-0.2,-0.4);
\draw (0,-0.2) -- (0.2,-0.4);
\end{tikzpicture} \ \ \ \ \, 
+ \begin{tikzpicture}
\draw (0,0.25) -- (0,0);
\draw (0,0) -- (-0.2,-0.2);
\draw (0,0) -- (0,-0.2);
\draw (0,0) -- (0.2,-0.2);
\draw (0.2,-0.2) -- (0,-0.4);
\draw (0.2,-0.2) -- (0.4,-0.4);
\end{tikzpicture} \ \ \ \ \, 
+ \begin{tikzpicture}
\draw (0,0.25) -- (0,0);
\draw (0,0) -- (-0.2,-0.2);
\draw (0,0) -- (0.2,-0.2);
\draw (-0.2,-0.2) -- (-0.4,-0.4);
\draw (-0.2,-0.2) -- (-0.2,-0.4);
\draw (-0.2,-0.2) -- (0,-0.4);
\end{tikzpicture} \ \ \ \ \, 
+ \begin{tikzpicture}
\draw (0,0.25) -- (0,0);
\draw (0,0) -- (-0.2,-0.2);
\draw (0,0) -- (0.2,-0.2);
\draw (0.2,-0.2) -- (0,-0.4);
\draw (0.2,-0.2) -- (0.2,-0.4);
\draw (0.2,-0.2) -- (0.4,-0.4);
\end{tikzpicture} 
\\
\stackrel{d}{\rightarrow} & \ 
\begin{tikzpicture}
\draw (0,0.25) -- (0,0);
\draw (0,0) -- (-0.2,-0.2);
\draw (0,0) -- (0.2,-0.2);
\draw (-0.2,-0.2) -- (-0.4,-0.4);
\draw (-0.2,-0.2) -- (0,-0.4);
\draw (-0.4,-0.4) -- (-0.6,-0.6);
\draw (-0.4,-0.4) -- (-0.2,-0.6);
\end{tikzpicture} \ \ \ \ 
+ 
\begin{tikzpicture}
\draw (0,0.25) -- (0,0);
\draw (0,0) -- (-0.2,-0.2);
\draw (0,0) -- (0.2,-0.2);
\draw (-0.2,-0.2) -- (-0.4,-0.4);
\draw (-0.2,-0.2) -- (0,-0.4);
\draw (0,-0.4) -- (-0.2,-0.6);
\draw (0,-0.4) -- (0.2,-0.6);
\end{tikzpicture} \ \ \ 
+ 
\begin{tikzpicture}
\draw (0,0.25) -- (0,0);
\draw (0,0) -- (-0.2,-0.2);
\draw (0,0) -- (0.2,-0.2);
\draw (-0.2,-0.2) -- (-0.4,-0.4);
\draw (-0.2,-0.2) -- (-0.1,-0.4);
\draw (0.2,-0.2) -- (0.1,-0.4);
\draw (0.2,-0.2) -- (0.4,-0.4);
\end{tikzpicture} \ \ \ 
- 
\begin{tikzpicture}
\draw (0,0.25) -- (0,0);
\draw (0,0) -- (-0.2,-0.2);
\draw (0,0) -- (0.2,-0.2);
\draw (-0.2,-0.2) -- (-0.4,-0.4);
\draw (-0.2,-0.2) -- (0,-0.4);
\draw (-0.4,-0.4) -- (-0.6,-0.6);
\draw (-0.4,-0.4) -- (-0.2,-0.6);
\end{tikzpicture} \ \ \ 
- 
\begin{tikzpicture}
\draw (0,0.25) -- (0,0);
\draw (0,0) -- (-0.2,-0.2);
\draw (0,0) -- (0.2,-0.2);
\draw (0.2,-0.2) -- (0,-0.4);
\draw (0.2,-0.2) -- (0.4,-0.4);
\draw (0,-0.4) -- (-0.2,-0.6);
\draw (0,-0.4) -- (0.2,-0.6);
\end{tikzpicture} 
\\
& + 
\begin{tikzpicture}
\draw (0,0.25) -- (0,0);
\draw (0,0) -- (-0.2,-0.2);
\draw (0,0) -- (0.2,-0.2);
\draw (-0.2,-0.2) -- (-0.4,-0.4);
\draw (-0.2,-0.2) -- (-0.1,-0.4);
\draw (0.2,-0.2) -- (0.1,-0.4);
\draw (0.2,-0.2) -- (0.4,-0.4);
\end{tikzpicture} \ \ \ 
+ 
\begin{tikzpicture}
\draw (0,0.25) -- (0,0);
\draw (0,0) -- (-0.2,-0.2);
\draw (0,0) -- (0.2,-0.2);
\draw (0.2,-0.2) -- (0,-0.4);
\draw (0.2,-0.2) -- (0.4,-0.4);
\draw (0,-0.4) -- (-0.2,-0.6);
\draw (0,-0.4) -- (0.2,-0.6);
\end{tikzpicture} \ \ \ 
+ 
\begin{tikzpicture}
\draw (0,0.25) -- (0,0);
\draw (0,0) -- (-0.2,-0.2);
\draw (0,0) -- (0.2,-0.2);
\draw (0.2,-0.2) -- (0,-0.4);
\draw (0.2,-0.2) -- (0.4,-0.4);
\draw (0.4,-0.4) -- (0.2,-0.6);
\draw (0.4,-0.4) -- (0.6,-0.6);
\end{tikzpicture} \ \ \ 
- 
\begin{tikzpicture}
\draw (0,0.25) -- (0,0);
\draw (0,0) -- (-0.2,-0.2);
\draw (0,0) -- (0.2,-0.2);
\draw (-0.2,-0.2) -- (-0.4,-0.4);
\draw (-0.2,-0.2) -- (0,-0.4);
\draw (0,-0.4) -- (-0.2,-0.6);
\draw (0,-0.4) -- (0.2,-0.6);
\end{tikzpicture} \ \ \ \ 
- 
\begin{tikzpicture}
\draw (0,0.25) -- (0,0);
\draw (0,0) -- (-0.2,-0.2);
\draw (0,0) -- (0.2,-0.2);
\draw (0.2,-0.2) -- (0,-0.4);
\draw (0.2,-0.2) -- (0.4,-0.4);
\draw (0.4,-0.4) -- (0.2,-0.6);
\draw (0.4,-0.4) -- (0.6,-0.6);
\end{tikzpicture} 
\end{align*}
We are now left with two terms that look the same. 
Algebraically, these are $(\tree \circ^C_2 \tree) \circ^C_1 \tree$ and $(\tree \circ^C_1 \tree) \circ^C_3 \tree$, which are described by the same tree, but the order of composition is opposite. We want our $det(T)$ to reflect this, but keep the signs given by the graded Leibniz rule of the other terms intact.

We define $det(T)$ as follows: let $T$ be a tree, and $ed(T)=\{ e_1, ..., e_{m-1} \}$ its internal edges (with $m$ the number of vertices of a tree), then we define $k^{ed(T)}$ to be the vector space spanned by $ed(T)$. Then $det(T)$ is the one-dimensional space spanned by the top exterior power of $k^{ed(T)}$, $e_1\wedge ... \wedge e_{m-1}$. So a basis element of $Cob(\mP)(n)^i$ looks like $T\otimes e_{\sigma(1)} \wedge ... \wedge e_{\sigma(i-1)}$ where $T$ is a decorated tree and $\sigma \in S_i$. If $i=1$, there are no internal edges, and $det(T)=k$, which doesn't add anything. 

We now need to define how the composition and the tree-composition work in terms of these wedge-products. For this, we will always put the active edge (the new one, or the one that is contracted) at the first place: we say that the edge that $\circ_T$ acts on must be the first in the wedge product, and if $T \otimes e_1 \wedge ... \wedge e_{m-1} $ and $T' \otimes f_1 \wedge ... \wedge f_{l-1}$ are trees in $Cob(\mP)$ we define their composition as 
$$ (T \otimes e_1 \wedge ... \wedge e_{m-1}) \circ^C_i (T' \otimes f_1 \wedge ... \wedge f_{l-1}) = 
T \circ^{Free}_i T' \otimes e \wedge e_1 \wedge ... \wedge e_{m-1} \wedge f_1 \wedge ... \wedge f_{l-1}$$
where $\circ^{Free}_i$ pastes the two trees along the $i$th edge of $T$ and $e$ is the new edge gained this way. This means that our $d$, which adds one edge to the tree, will put this new edge at the first place of our exterior product. So for example, (again in $\mAss$) 
$$
\begin{tikzpicture} [scale=1.5]
\draw (0,0.25) -- (0,0) node [left] {\tiny{$e_1$}};
\draw (0,0) -- (-0.2,-0.2);
\draw (0,0) -- (0,-0.2);
\draw (0,0) -- (0.2,-0.2);
\draw (-0.2,-0.2) -- (-0.4,-0.4);
\draw (-0.2,-0.2) -- (0,-0.4);
\end{tikzpicture} \otimes e_1
\stackrel{d}{\rightarrow}
\begin{tikzpicture} [scale=1.5]
\draw (0,0.25) -- (0,0) node [left] {\tiny{$e_2$}};
\draw (0,0) -- (-0.2,-0.2) node [left] {\tiny{$e_1$}};
\draw (0,0) -- (0.2,-0.2);
\draw (-0.2,-0.2) -- (-0.4,-0.4);
\draw (-0.2,-0.2) -- (0,-0.4);
\draw (-0.4,-0.4) -- (-0.6,-0.6);
\draw (-0.4,-0.4) -- (-0.2,-0.6);
\end{tikzpicture} \otimes e_2 \wedge e_1 + 
\begin{tikzpicture} [scale=1.5]
\draw (0,0.25) -- (0,0) node [left] {\tiny{$e_1$}};
\draw (0,0) node [right] {\tiny{$e_2$}} -- (-0.2,-0.2);
\draw (0,0) -- (0.2,-0.2);
\draw (-0.2,-0.2) -- (-0.4,-0.4);
\draw (-0.2,-0.2) -- (-0.1,-0.4);
\draw (0.2,-0.2) -- (0.1,-0.4);
\draw (0.2,-0.2) -- (0.4,-0.4);
\end{tikzpicture} \otimes e_2 \wedge e_1
$$
If we want to calculate the differential of a tree in our complex without using wedge products, we first have to choose some way of labelling the internal edges of all possible trees. Then we have to label the $m-1$ internal edges of the tree in that fixed way, apply $d$ as defined above, labeling our new edge with $1$ and adding 1 to all the old labels, and then we relabel our tree to the chosen labeling, which adds a minus sign (if the relabeling is an odd permutation) or not (if it's even).\footnote{There is nothing mysterious or canonical about choosing a standard way of labeling, and the choice does not have any influence on our contruction; it's just an (arbitrary) choice of calling some trees negative. This also means that if we choose a standard way of labeling, we can forget to label edges in our computations. We will use the conventinon that we label by level, from left to right: we first label all edges coming from the top vertex, then the edges from the vertex that has edge 1 as output, then those of the vertex on edge 2, and so on, all from left to right. So for example, we'd label like this: 
$$
\begin{tikzpicture} [scale=1.5]
\draw (0,0.3) -- (0,0);
\draw (0,0) -- (-0.4,-0.3);
\path (-0.12,-0.08) node [left] {\tiny{1}};
\draw (0,0) -- (0,-0.3);
\path (-0.06,-0.17) node {\tiny{2}};
\draw (0,0) -- (0.4,-0.3);
\path (0.12,-0.08) node [right] {\tiny{3}};
\draw (-0.4,-0.3) -- (-0.55,-0.5);
\draw (-0.4,-0.3) -- (-0.25,-0.5);
\path (-0.27,-0.36) node {\tiny{4}};
\draw (0,-0.3) -- (-0.15,-0.5); 
\draw (0,-0.3) -- (0.15,-0.5);
\path (0.13,-0.36) node {\tiny{5}};
\draw (0.4,-0.3) -- (0.25,-0.5);
\draw (0.4,-0.3) -- (0.55,-0.5);
\draw (-0.55,-0.5) -- (-0.7,-0.7);
\draw (-0.55,-0.5) -- (-0.4,-0.7);
\path (-0.42,-0.56) node {\tiny{6}};
\draw (0.15,-0.5) -- (0,-0.7);
\draw (0.15,-0.5) -- (0.3,-0.7);
\draw (-0.4,-0.7) -- (-0.55,-0.9);
\draw (-0.4,-0.7) -- (-0.25,-0.9);
\end{tikzpicture}
$$}

It is now easily shown that our $d$ satisfies $d^2=0$: first, we remark that each part of $d^2(T)$ has two new edges compared to $T$. Each tree that is created by $d^2$ is created in two ways: if $e$ and $f$ are our two new edges, we can either pull $e$ or $f$ outside first. In the first case, we get 
$T \otimes$ $f \wedge e \wedge e_1 \wedge ... \wedge e_{m-1}$, while in the second case we get 
$T \otimes$ $e \wedge f \wedge e_1 \wedge ... \wedge e_{m-1}$, and these terms cancel each other precisely. 

\begin{definition} We call the constructed complex $(Cob(\mP),d)$, the \textbf{cobar complex} of $\mP$.
\end{definition}

\subsection{The double cobar complex}

We can of course construct the cobar complex of the cobar complex of an operad, if we need to. However, to do this we first need to know how to construct the cobar complex of an arbitrary dg-operad. 

First, some conventions on the dual of a complex. If $V=(\{ V^i \}, d$) is a complex with $d$ degree 1, we define the dual complex 
$V^*:= (\{ (V^*)^i \}, d^*)$ where $(V^*)^i:=(V^{-i})^*$, which means that $d^*: \alpha \mapsto \alpha \circ d$ is still a differential of degree 1. Because $d^2=0$, $(d^*)^2: \alpha \mapsto \alpha \circ d^2 = 0$. Our convention on grades means that the dual of our cobar complex $Cob(\mP)(n)$ (which lived in degrees 1 to $n-1$) lives in degrees $-n+1$ to $-1$. 

If $\mP=\{\mP (n)^i\}$ is a dg-operad with differential $d_1$, the cobar complex of $\mP$ becomes a bicomplex $Cob(\mP) = \{ Cob(\mP)(n)^{i,j} \}$ with two degrees and two differentials: we have the dual of the first cobar diferential, $d_1^*: Cob(\mP)(n)^{i,j} \rightarrow Cob(\mP)(n)^{i+1,j}$, and the new cobar differential $d_2:Cob(\mP)(n)^{i,j} \rightarrow Cob(\mP)(n)^{i,j+1}$. We want to have one generalised degree such that our generalised differential $d=d_1^* + d_2$ makes sense (i.e. is of well-defined degree) so we define $C_\mP(n)^m := \bigoplus_{i+j=m} Cob(\mP)(n)^{i,j}$. For our new differential $d$ to indeed be a differential, we need $d_1^*d_2 = -d_2d_1^*$. 

We now see that the cobar complex of the cobar complex of an operad $\mP$ consists of trees with $j$ vertices and $n$ inputs, with each vertex $v_i$ decorated with an element of $(Cob(\mP)(m_i)^{k_i})^*$ with $\sum m_i = n+j-1$, $k_i<m_i$. The degree of this element is $j - \sum_{i=1}^j k_i$. Note that the element with the lowest possible degree now is the tree with one vertex decorated with an element with highest possible degree in $Cob(\mP)^*$, a binary tree, and has degree $1- (n-1) = -n+2$. 
An element with highest possible degree is at each vertex decorated with an element of lowest possible degree (1) in $Cob(\mP)$, and has degree $j- \sum_{i=1}^j 1 = 0$. 

There are several ways to visualise the elements of our double cobar complex. One way to do this is as trees, with a ``bubble" at each vertex. Inside the bubble is a tree whose vertices are decorated with elements from $\mP$. In this case, an element of $Cob(Cob(\mAss))(8)^{-1}$ would look like
$$
\begin{tikzpicture}
\draw (0,0.4) -- (0,0);
\draw (0,0) -- (-0.5,-0.5);
\draw (0,0) -- (0,-0.6);
\draw (0,0) -- (0.5,-0.5);
\draw (0.5,-0.6) circle (0.2);
\draw (-0.5,-0.5) -- (-0.8,-0.9);
\draw (-0.5,-0.5) -- (-0.2,-0.9);
\draw (-0.2,-1) circle (0.2);
\draw [rotate=45] (-0.35,0) ellipse (0.6 and 0.35);
\draw (0.5,-0.5) -- (0.1,-0.9);
\draw (0.5,-0.5) -- (0.4,-0.9);
\draw (0.5,-0.5) -- (0.7,-0.9);
\draw (0.5,-0.5) -- (1,-0.9);
\draw (-0.2,-0.9) -- (-0.5,-1.3);
\draw (-0.2,-0.9) -- (0.1,-1.3);
\end{tikzpicture}
$$
Another way to show these elements is by writing the edges of the ``outer" complex (the ones that would be outside of the bubbles in the notation above) as interrupted edges. Then, the same element would look like
$$
\begin{tikzpicture}
\draw (0,0.4) -- (0,0);
\draw (0,0) -- (-0.5,-0.5);
\draw (0,0) -- (0,-0.5);
\draw (0,0) -- (0.2,-0.2);
\draw (0.3,-0.3) -- (0.5,-0.5);
\draw (-0.5,-0.5) -- (-0.8,-0.9);
\draw (-0.5,-0.5) -- (-0.38,-0.66);
\draw (-0.32,-0.74) -- (-0.2,-0.9);
\draw (0.5,-0.5) -- (0.1,-0.9);
\draw (0.5,-0.5) -- (0.4,-0.9);
\draw (0.5,-0.5) -- (0.7,-0.9);
\draw (0.5,-0.5) -- (1,-0.9);
\draw (-0.2,-0.9) -- (-0.5,-1.3);
\draw (-0.2,-0.9) -- (0.1,-1.3);
\end{tikzpicture}
$$
A third way would be to label all edges with 1 (for the edges inside of the bubbles) and 2 (for the edges outside of the bubbles). Our element would then look like
$$
\begin{tikzpicture}
\draw (0,0.4) -- (0,0);
\draw (0,0) -- (-0.5,-0.5) -- (-0.8,-0.9);
\path (-0.15,-0.15) node [left] {\tiny{1}};
\draw (0,0) -- (0,-0.5);
\draw (0,0) -- (0.5,-0.5);
\path (0.15,-0.15) node [right] {\tiny{2}};
\draw (-0.5,-0.5) -- (-0.2,-0.9);
\path (-0.43,-0.62) node [right] {\tiny{2}};
\draw (0.5,-0.5) -- (0.1,-0.9);
\draw (0.5,-0.5) -- (0.4,-0.9);
\draw (0.5,-0.5) -- (0.7,-0.9);
\draw (0.5,-0.5) -- (1,-0.9);
\draw (-0.2,-0.9) -- (-0.5,-1.3);
\draw (-0.2,-0.9) -- (0.1,-1.3);
\end{tikzpicture}
$$
(this labelling has nothing to do with the labelling we used to order the edges with our $det(T)$) \\
We now have to be very careful when mentioning trees and edges, as to which kind we are talking about. Abusing our earlier conventions, we will call the edges in bubbles ``internal" and the edges outside ``external".
Also, we now have three compositions and two tree-compositions, so we'll denote the (tree-)composition on $Cob(\mP)$ as $\circ_i^{C_1}$ ($\circ_T^\mP$), and the (tree-)composition on $Cob(Cob(\mP))$ as $\circ_i^{C_2}$ ($\circ_T^{C_1}$).

Now, let's look at how $d_1^*$ and $d_2$ act on our operad. As before, $d_2$ is the dual of $\circ^{C_1}_T$, so it ``cuts" a vertex (or in our case, a bubble) into two parts. 
But we now know exactly how the composition $\circ^{C_1}_i$ works: it's the pasting of two free elements, without any relations. In other words, it makes one external edge into an internal edge. So our differential $d_2$ does the reverse, changing an internal edge to an external edge. For example:
$$ \begin{tikzpicture}
\draw (0,0.4) -- (0,0);
\draw (0,0) -- (-0.5,-0.5); 
\draw (0,0) -- (0,-0.5);
\draw (0,0) -- (0.5,-0.5); 
\draw (-0.5,-0.5) -- (-0.8,-0.9);
\draw (-0.5,-0.5) -- (-0.2,-0.9); 
\draw (0.5,-0.5) -- (0.1,-0.9);
\draw (0.5,-0.5) -- (0.4,-0.9);
\draw (0.5,-0.5) -- (0.7,-0.9);
\draw (0.5,-0.5) -- (1,-0.9);
\draw (-0.2,-0.9) -- (-0.5,-1.3);
\draw (-0.2,-0.9) -- (0.1,-1.3);
\end{tikzpicture} 
\stackrel{d_2}{\rightarrow} 
\begin{tikzpicture}
\draw (0,0.4) -- (0,0);
\draw (0,0) -- (-0.2,-0.2); 
\draw (-0.3,-0.3) -- (-0.5,-0.5);
\draw (0,0) -- (0,-0.5);
\draw (0,0) -- (0.5,-0.5); 
\draw (-0.5,-0.5) -- (-0.8,-0.9);
\draw (-0.5,-0.5) -- (-0.2,-0.9); 
\draw (0.5,-0.5) -- (0.1,-0.9);
\draw (0.5,-0.5) -- (0.4,-0.9);
\draw (0.5,-0.5) -- (0.7,-0.9);
\draw (0.5,-0.5) -- (1,-0.9);
\draw (-0.2,-0.9) -- (-0.5,-1.3);
\draw (-0.2,-0.9) -- (0.1,-1.3);
\end{tikzpicture} 
+ 
\begin{tikzpicture}
\draw (0,0.4) -- (0,0);
\draw (0,0) -- (-0.5,-0.5); 
\draw (0,0) -- (0,-0.5);
\draw (0,0) -- (0.2,-0.2); 
\draw (0.3,-0.3) -- (0.5,-0.5);
\draw (-0.5,-0.5) -- (-0.8,-0.9);
\draw (-0.5,-0.5) -- (-0.2,-0.9); 
\draw (0.5,-0.5) -- (0.1,-0.9);
\draw (0.5,-0.5) -- (0.4,-0.9);
\draw (0.5,-0.5) -- (0.7,-0.9);
\draw (0.5,-0.5) -- (1,-0.9);
\draw (-0.2,-0.9) -- (-0.5,-1.3);
\draw (-0.2,-0.9) -- (0.1,-1.3);
\end{tikzpicture} 
+ 
\begin{tikzpicture}
\draw (0,0.4) -- (0,0);
\draw (0,0) -- (-0.5,-0.5); 
\draw (0,0) -- (0,-0.5);
\draw (0,0) -- (0.5,-0.5); 
\draw (-0.5,-0.5) -- (-0.8,-0.9);
\draw (-0.5,-0.5) -- (-0.38,-0.66); 
\draw (-0.32,-0.74) -- (-0.2,-0.9);
\draw (0.5,-0.5) -- (0.1,-0.9);
\draw (0.5,-0.5) -- (0.4,-0.9);
\draw (0.5,-0.5) -- (0.7,-0.9);
\draw (0.5,-0.5) -- (1,-0.9);
\draw (-0.2,-0.9) -- (-0.5,-1.3);
\draw (-0.2,-0.9) -- (0.1,-1.3);
\end{tikzpicture}$$
Of course, the other differential is $d_1^*=(\circ_T^\mP {}^*)^*=\circ_T^\mP$, which contracts an internal edge. 
So both differentials remove one internal edge: $d_1^*$ contracts it and $d_2$ pulls it outside. 

For our differential to square to zero, we need $d_1^*d_2 = -d_2d_1^*$. This is true because both maps remove two internal edges, and for each pair of internal edges the resulting trees look the same, and the maps differ by a global minus sign. 
This is illustrated by the following example in $Cob(Cob(\mAss))(4)$: 
$$ \begin{tikzpicture}
\draw (0,0.4) -- (0,0);
\draw (0,0) -- (-0.5,-0.5);
\draw (0,0) -- (0.5,-0.5);
\draw (-0.5,-0.5) -- (-0.8,-0.9);
\draw (-0.5,-0.5) -- (-0.2,-0.9);
\draw (0.5,-0.5) -- (0.2,-0.9);
\draw (0.5,-0.5) -- (0.8,-0.9);
\end{tikzpicture} 
\ \stackrel{d_2}{\rightarrow} \ 
\begin{tikzpicture}
\draw (0,0.4) -- (0,0);
\draw (0,0) -- (-0.3,-0.3);
\draw (0,0) -- (0.5,-0.5);
\draw (-0.4,-0.4) -- (-0.6,-0.6);
\draw (-0.6,-0.6) -- (-0.9,-1);
\draw (-0.6,-0.6) -- (-0.3,-1);
\draw (0.5,-0.5) -- (0.2,-0.9);
\draw (0.5,-0.5) -- (0.8,-0.9);
\end{tikzpicture} \ - \ \begin{tikzpicture}
\draw (0,0.4) -- (0,0);
\draw (0,0) -- (-0.5,-0.5);
\draw (0,0) -- (0.3,-0.3);
\draw (-0.5,-0.5) -- (-0.8,-0.9);
\draw (-0.5,-0.5) -- (-0.2,-0.9);
\draw (0.4,-0.4) -- (0.6,-0.6);
\draw (0.6,-0.6) -- (0.3,-1);
\draw (0.6,-0.6) -- (0.9,-1);
\end{tikzpicture} 
\ \stackrel{d_1^*}{\rightarrow} \ 
\begin{tikzpicture}
\draw (0,0.4) -- (0,0);
\draw (0,0) -- (-0.3,-0.3);
\draw (0,0) -- (0,-0.5);
\draw (0,0) -- (0.5,-0.5);
\draw (-0.4,-0.4) -- (-0.6,-0.6);
\draw (-0.6,-0.6) -- (-0.9,-1);
\draw (-0.6,-0.6) -- (-0.3,-1);
\end{tikzpicture} \ - \ \begin{tikzpicture}
\draw (0,0.4) -- (0,0);
\draw (0,0) -- (-0.5,-0.5);
\draw (0,0) -- (0,-0.5);
\draw (0,0) -- (0.3,-0.3);
\draw (0.4,-0.4) -- (0.6,-0.6);
\draw (0.6,-0.6) -- (0.3,-1);
\draw (0.6,-0.6) -- (0.9,-1);
\end{tikzpicture} $$
$$ \begin{tikzpicture}
\draw (0,0.4) -- (0,0);
\draw (0,0) -- (-0.5,-0.5);
\draw (0,0) -- (0.5,-0.5);
\draw (-0.5,-0.5) -- (-0.8,-0.9);
\draw (-0.5,-0.5) -- (-0.2,-0.9);
\draw (0.5,-0.5) -- (0.2,-0.9);
\draw (0.5,-0.5) -- (0.8,-0.9);
\end{tikzpicture} 
\ \stackrel{d_1^*}{\rightarrow} \ \ \ 
\begin{tikzpicture}
\draw (0,0.4) -- (0,0);
\draw (0,0) -- (-0.5,-0.5);
\draw (0,0) -- (0,-0.5);
\draw (0,0) -- (0.5,-0.5);
\draw (0.5,-0.5) -- (0.2,-0.9);
\draw (0.5,-0.5) -- (0.8,-0.9);
\end{tikzpicture} \ \ - \ \ \begin{tikzpicture}
\draw (0,0.4) -- (0,0);
\draw (0,0) -- (-0.5,-0.5);
\draw (0,0) -- (0,-0.5);
\draw (0,0) -- (0.5,-0.5);
\draw (-0.5,-0.5) -- (-0.8,-0.9);
\draw (-0.5,-0.5) -- (-0.2,-0.9);
\end{tikzpicture} 
\ \ \ \stackrel{d_2}{\rightarrow} \ 
\begin{tikzpicture}
\draw (0,0.4) -- (0,0);
\draw (0,0) -- (-0.5,-0.5);
\draw (0,0) -- (0,-0.5);
\draw (0,0) -- (0.3,-0.3);
\draw (0.4,-0.4) -- (0.6,-0.6);
\draw (0.6,-0.6) -- (0.3,-1);
\draw (0.6,-0.6) -- (0.9,-1);
\end{tikzpicture} \ - \ \begin{tikzpicture}
\draw (0,0.4) -- (0,0);
\draw (0,0) -- (-0.3,-0.3);
\draw (0,0) -- (0,-0.5);
\draw (0,0) -- (0.5,-0.5);
\draw (-0.4,-0.4) -- (-0.6,-0.6);
\draw (-0.6,-0.6) -- (-0.9,-1);
\draw (-0.6,-0.6) -- (-0.3,-1);
\end{tikzpicture} $$
(note that we use $det(T)$ here for the signs)

We now arrive at our first result of resolutions: 

\begin{theorem} For any operad $\mP$, $H^*(Cob(Cob(\mP))) \cong \mP$. 
\end{theorem}
\begin{proof} While not hard, this proof requires the use of filtrations which will be explained in chapter 3, so we'll postpone the proof until then. 
\end{proof}

\subsection{Koszulness}

Although the theorem above gives us a surefire way of finding a resolution for an operad, the double cobar complex is an enormous construction for all but the most simple operads. And it turns out that for many quadratic operads, we need not go that far: 

\begin{definition} A \textbf{Koszul operad} is a quadratic operad $\mP$ such that $H^*(Cob(\mP)) \cong \mP^!$
\end{definition}

Luckily, the examples we reviewed are all Koszul operads: 

\begin{prop} $\mAss$, $\mCom$, $\mLie$ and $\mPoiss$ are all Koszul operads. 
\end{prop}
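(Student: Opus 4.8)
The plan is to show, for each operad $\mP$ in the list, that the cobar cohomology $H^*(Cob(\mP))$ is concentrated in the top degree and there recovers the quadratic dual $\mP^!$; by the preceding theorem $\mAss^!\cong\mAss$, $\mCom^!\cong\mLie$, $\mLie^!\cong\mCom$ and $\mPoiss^!\cong\mPoiss$, this is exactly the Koszulness we want. The first reduction is to establish the general principle that $\mP$ is Koszul iff $\mP^!$ is Koszul. This cuts the four cases down to three genuine computations, since Koszulness of $\mLie\cong\mComm^!$ then follows for free from that of $\mComm$, while $\mAss$ and $\mPoiss$ are self-dual. I would prove this principle using bar-cobar duality: the double cobar theorem $H^*(Cob(Cob(\mP)))\cong\mP$ together with $(\mP^!)^!\cong\mP$ lets one compare the cobar complex of $\mP$ with that of $\mP^!$.

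Before the hard part, I would record the necessary numerical check. For each $\mP$ the graded Euler characteristic of $Cob(\mP)(n)$ is an alternating sum of dimensions of decorated trees, and packaging these into the exponential generating function $g_\mP(x)=\sum_{n\ge1}\frac{\dim\mP(n)}{n!}x^n$ yields an inversion identity $g_{\mP^!}(-g_\mP(-x))=x$. Using $\dim\mAss(n)=n!$, $\dim\mCom(n)=1$, $\dim\mLie(n)=(n-1)!$ and $\dim\mPoiss(n)=n!$ from the examples, this identity is verified in each case. It is only a necessary condition, but it confirms that the dimensions are consistent and tells us precisely that the cohomology, if concentrated, must sit in top degree $n-1$ with the dimensions of $\mP^!(n)$.

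The substance of the proof is the concentration in top degree, i.e.\ the vanishing $H^i(Cob(\mP)(n))=0$ for $i<n-1$. For $\mCom$, since $\mCom(n)$ is one-dimensional with trivial action, $Cob(\mCom)(n)$, after the $det(T)\otimes sgn$ twist, is identified with the chain complex computing the reduced homology of the partition lattice $\Pi_n$; the classical fact that $\Pi_n$ is Cohen--Macaulay (indeed shellable) forces this homology into top degree, where it carries exactly the representation $\mLie(n)\otimes sgn$. This gives $H^*(Cob(\mCom))\cong\mLie\cong\mCom^!$, and hence, via the duality principle, the Koszulness of $\mLie$. For $\mAss$ I would argue concentration either by an explicit contracting homotopy on $Cob(\mAss)(n)$ below the top degree, or, more structurally, by exhibiting the quadratic Gröbner (PBW) basis of $\mAss$ coming from the comb normal form of the examples; concentration then yields $H^*(Cob(\mAss))\cong\mAss\cong\mAss^!$. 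For $\mPoiss$, rather than attacking its cobar complex directly, I would use that the distributivity relation is precisely a distributive law presenting $\mPoiss$ as the composite of $\mCom$ and $\mLie$ (matching the $n!$-dimensional normal form found in the examples), and invoke the theorem that an operad built from a distributive law between two Koszul operads is again Koszul; Koszulness of $\mCom$ and $\mLie$ then delivers that of $\mPoiss$.

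The main obstacle is exactly this top-degree concentration: the Euler-characteristic and dimension bookkeeping is routine, but proving acyclicity in the lower degrees is the real content. For $\mCom$ and $\mLie$ it is hidden in the topology of the partition lattice (the shellability/Cohen--Macaulay input), and for $\mAss$ in producing a genuine contracting homotopy or a fully verified Gröbner basis; checking that the Poisson relation really is a distributive law with invertible comparison map, so that the distributive-law criterion applies, is the analogous crux in the last case.
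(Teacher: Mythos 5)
Your outline is sound, but be aware that it takes a genuinely different route from the paper, which in fact proves nothing here: its ``proof'' simply defers to the literature, citing \cite{GK} for $\mAss$, $\mCom$, $\mLie$ (where Koszulness is deduced from the Koszulness of the corresponding quadratic \emph{algebras} --- tensor, polynomial and exterior) and \cite{Mar} for $\mPoiss$. Your treatment of $\mPoiss$ coincides exactly with the cited argument of Markl: present the distributivity relation as a distributive law between $\mCom$ and $\mLie$ and verify that the comparison map is an isomorphism, which is the $n!$ dimension count already carried out in the paper's examples. For the other three operads you replace the algebra-level argument of \cite{GK} by combinatorial topology and rewriting theory: the identification of $Cob(\mCom)(n)$ with a shifted chain complex of the partition lattice $\Pi_n$, whose shellability forces homology into top degree carrying $\mLie(n)\otimes sgn$; a PBW/Gr\"obner (comb normal form) argument for $\mAss$; and the duality principle ($\mP$ Koszul iff $\mP^!$ Koszul) to obtain $\mLie$ from $\mCom$. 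What your route buys is a proof that stays at the level of the cobar complexes the paper actually works with, rather than detouring through associated algebras; what it costs is that every imported ingredient (shellability of $\Pi_n$, the Klyachko--Stanley identification of its top homology, Markl's distributive-law theorem, and the duality principle itself) is a substantial theorem, so the total burden is comparable to the citations the paper hides behind. One caution on the only step you sketch in-house: deriving the duality principle from $H^*(Cob(Cob(\mP)))\cong\mP$ and $(\mP^!)^!\cong\mP$ is not formal, since a quasi-isomorphism $Cob(\mP)\to\mP^!$ does not automatically induce a quasi-isomorphism after applying $Cob$ again; one needs a filtration/spectral-sequence comparison of exactly the kind the paper develops in Chapter 3, and your proposal should acknowledge that this is where the real work of that reduction lives.
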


\begin{proof} In the interest of brevity, we will not give the proof here. We remark that the Koszulness of $\mAss$, $\mCom$ and $\mLie$ was proved in \cite{GK} using the Koszulness of the corresponding algebras. In \cite{Mar} it was shown that the Koszulness of $\mPoiss$ follows from the Koszulness of $\mCom$ and $\mLie$ and the distributive law in $\mPoiss$. 
\end{proof}

\chapter{Wheeled operads}

\section{Definitions and examples}

We now define an extention to the theory of operads, wheeled operads. Again, this structure occurs most naturally when discussing the operad endomorphisms, $Hom(V^{\otimes n},V)$, as the trace operator: 

\begin{definition} The $i$-trace of an element $\nu \in Hom(V^{\otimes n},V)$ is $$tr_i (\nu) = \sum_{j=1}^{dim(V)} \nu_j (...,e_j,...) \in Hom(V^{\otimes n-1},k))$$ with $e_j$ filled in on the $i$th spot, $\{e_1,...,e_{dim(V)}\}$ a basis for $V$, and for a vector $v$, $v_j = e_j^*(v)$. 
\end{definition}

From linear algebra, we know that this definition does not depend on the choice of basis $\{e_1,...,e_{dim(V)}\}$. We also know that the trace satisfies the cyclic property $ tr(a_{1} a_{2} ...  a_{n}) = tr(a_{2} ...  a_{n} a_{1}) $. 
The trace of a linear map is a map with one less input and an output in the ground field $k$, which we think of as $V^{\otimes 0}$. This inspires the following definition:

\begin{definition} A \textbf{wheeled operad} $\mP$ is an ordinary operad $\mP_o=\{\mP_1(n)|n\geq 1\}$ together with 
a collection of vector spaces $\mP_w = \{\mP_w(n)|n\geq 0\}$ and the following extra structure: 
\begin{enumerate}
	\item[i)] an action of $S_n$ on each $\mP_w(n)$; 
	\item[ii)] an extention of our composition: $\circ_i: \mP_w(n) \otimes \mP_o (m) \rightarrow \mP_w(n+m-1)$ for $n \geq 1$, with $1 \leq i \leq n$; 
	\item[iii)] a collection of maps, called \textbf{contractions}, $\xi_i: \mP_o(n) \rightarrow \mP_w(n-1)$ (with $1\leq i \leq n$) for each $n$ that satisfies the following cyclic property: $\xi_{i+j-1} (\nu \circ_i \mu) = \xi_{i+j-1} (\mu \circ_j \nu)$ (with $\nu \in \mP_o (n)$, $\mu \in \mP_o (m)$, $i\leq n$ and $j\leq m$). Note that this is unrelated to the contraction of an edge in a tree. 
\end{enumerate}
We call $\mP_o$ and $\mP_w$ the operadic and wheeled parts of $\mP$, respectively.
\end{definition}

As before, we can think of elements of our operad in terms of graphs. An element of $\mP_w$ is viewed as an element with no output: 
$$ \begin{tikzpicture} [scale=1.5]
\draw (0,0) node [above right= -4pt and -2pt] {\small{$\nu$}};
\draw (-0.3,-0.2) -- (0,0) -- (0.3,-0.2);
\draw (0,0) -- (-0.15,-0.2);
\draw (-0.075,-0.15) -- (0.18,-0.15) [dotted];
\end{tikzpicture} $$
We call trees without an output outputless trees (so a ``tree" will always have an output). 
The $i$-contraction operator adds a loop from the output to the $i$th input (while relabeling inputs $\{i+1,...,n\}$ to $\{i,...,n-1\}$): 
$$\begin{tikzpicture} [scale=1.5]
\draw (0,0.25) -- (0,0) node [above right= -4pt and -2pt] {\small{$\nu$}};
\draw (0,0) -- (-0.3,-0.2);
\draw (0,0) -- (-0.15,-0.2);
\draw (0,0) -- (0.15,-0.3) node [below=-2pt] {\tiny{$i$}};
\draw (-0.075,-0.15) -- (0.18,-0.15) [dotted];
\draw (0,0) -- (0.3,-0.2);
\end{tikzpicture} 
\stackrel{\xi_i}{\longrightarrow}
\begin{tikzpicture} [scale=1.5]
\draw (0,0) -- (-0.3,-0.2);
\draw (0,0) -- (-0.15,-0.2);
\draw (0,0) -- (0.1,-0.2) to [out=300,in=270] (0.5,-0.2) to [out=90,in=90] (0,0.25) -- (0,-0.00) node [above right= -4pt and -2pt] {\small{$\nu$}};
\draw (-0.075,-0.15) -- (0.18,-0.15) [dotted];
\draw (0,0) -- (0.3,-0.2);
\end{tikzpicture} = \begin{tikzpicture} [scale=1.5]
\draw (0,0) node [above right= -4pt and -2pt] {\small{$\xi_i(\nu)$}};
\draw (0,0) -- (-0.3,-0.2);
\draw (0,0) -- (-0.15,-0.2);
\draw (-0.075,-0.15) -- (0.18,-0.15) [dotted];
\draw (0,0) -- (0.3,-0.2);
\end{tikzpicture}$$
This indeed makes for a graph with one less input and no output. The graph is still directed: the loop is directed downwards. We call trees with such a loop \textbf{wheeled trees} (even though they aren't really trees). 
The rule for contractions of compositions makes sense in terms of these graphs: we can move the lowest element in a loop along, to make it the highest, and vice versa: 
$$\begin{tikzpicture} [scale=1.5]
\draw (0,0.25) -- (0,0) node [above right= -4pt and -2pt] {\small{$\nu$}};
\draw (0,0) -- (-0.3,-0.2);
\draw (0,0) -- (-0.15,-0.2);
\draw (0,0) -- (0.2,-0.4) node [above right= -4pt and -2pt] {\small{$\mu$}};
\path (0.125,-0.25) node [left=-2pt] {\tiny{$i$}};
\draw (-0.075,-0.15) -- (0.1875,-0.15) [dotted];
\draw (0,0) -- (0.3,-0.2);
\draw (0.2,-0.4) -- (-0.1,-0.6);
\draw (0.2,-0.4) -- (0.05,-0.6);
\draw (0.2,-0.4) -- (0.3,-0.6) to [out=300,in=270] (0.7,-0.6) to [out=90,in=90] (0,0.25) -- (0,-0.00);
\path (0.325,-0.65) node [left=-2pt] {\tiny{$j$}};
\draw (0.125,-0.55) -- (0.3875,-0.55) [dotted];
\draw (0.2,-0.4) -- (0.5,-0.6);
\end{tikzpicture} = \begin{tikzpicture} [scale=1.5]
\draw (0,0.25) -- (0,0) node [above right= -4pt and -2pt] {\small{$\mu$}};
\draw (0,0) -- (-0.3,-0.2);
\draw (0,0) -- (-0.15,-0.2);
\draw (0,0) -- (0.2,-0.4) node [above right= -4pt and -2pt] {\small{$\nu$}};
\path (0.125,-0.25) node [left=-2pt] {\tiny{$j$}};
\draw (-0.075,-0.15) -- (0.1875,-0.15) [dotted];
\draw (0,0) -- (0.3,-0.2);
\draw (0.2,-0.4) -- (-0.1,-0.6);
\draw (0.2,-0.4) -- (0.05,-0.6);
\draw (0.2,-0.4) -- (0.3,-0.6) to [out=300,in=270] (0.7,-0.6) to [out=90,in=90] (0,0.25) -- (0,-0.00);
\path (0.325,-0.65) node [left=-2pt] {\tiny{$i$}};
\draw (0.125,-0.55) -- (0.3875,-0.55) [dotted];
\draw (0.2,-0.4) -- (0.5,-0.6);
\end{tikzpicture} $$ 
The composition is again viewed the contraction of internal edges of trees. Note that $\nu \circ_i \mu$ with $\mu\in\mP_w$ would not make sense in terms of our trees, and indeed isn't defined. 

Starting with an ordinary operad, we can create a wheeled one: 

\begin{definition} The \textbf{wheeled completion}, of an operad $\mP$ is $\mP\wh_c = \mP \oplus \xi(\mP)$, where $\xi$ satisfies no relations but the one mentioned above, $\xi_{i+j-1} (\nu \circ_i \mu) = \xi_{i+j-1} (\mu \circ_j \nu)$, and with a composition and $S_n$-action that are extentions of the ones on $\mP$. 
\end{definition}

We can now ``lift" some of the definitions of the previous chapter to the wheeled case. 

\begin{definition} Given a collection of $S_n$-modules, $E=\{E(n)\}$ and another collection of $S_n$-modules $F=\{F(n)\}$, we define the \textbf{free wheeled operad} over $E$ and $F$, $Free\wh(E,F)$, to be the wheeled operad $Free\wh(E,F) = Free\wh(E,F)_o \oplus Free\wh(E,F)_w$, where $Free\wh(E,F)_o(n) = Free(E)(n)$, and $Free\wh(E,F)_w$ is spanned by the collection of wheeled trees decorated with appropriate elements of $E$, and by the collection of outputless trees, where the top vertex is decorated with an element of $F$, and the rest of the vertices are decorated with elements of $E$. The wheeled trees again satisfy the cyclic property. 

As in the non-wheeled case, the composition pastes the output of a non-wheeled tree to one of the inputs of a (possibly wheeled or outputless) tree, and $S_n$ permutes the inputs. The contraction adds a wheel to a non-wheeled tree. We again demand that our elements are linear in each decoration. 
Note that if $F=0$, $Free\wh(E):=Free\wh(E,0)$ is the wheeled completion of $Free(E)$. 
\end{definition}

\begin{definition} An \textbf{ideal} $\mI$ of a wheeled operad $\mP$ consists of an operadic ideal $\mI_o = \{\mI_o(n)\} \subset \mP_o$ and a collection of subspaces $\mI_w (n) \subset \mP_w (n)$ such that: 
\begin{enumerate}
	\item[i)] If $\nu\in\mI_w(n)$, then $\sigma (\nu) \in \mI_w(n)$, with $\sigma\in S_n$;
	\item[ii)] If $\nu\in\mI_w(n)$ and $\mu\in\mP_o(m)$ then $\nu \circ_i \mu \in \mI_w(n+m-1)$, and if $\nu\in\mP_w(n)$ and $\mu\in\mI_o(m)$ then $\nu \circ_i \mu \in \mI_w(n+m-1)$;
	\item[iii)] If $\nu\in\mI_o(n)$ then $\xi_i(\nu)\in\mI_w(n-1)$. 
\end{enumerate}
\end{definition}

Again, given a subset $R \subset \mP$ we can define the ideal $<R> \subset \mP$, and quotienting out is well-defined because of our definition of an ideal. 

\begin{definition} Given an $S_2$-module $E$, and a subset $R = R_o \oplus R_w  \subset Free\wh_o(E)(3) \oplus Free\wh_w(E)(1)$, we define their \textbf{quadratic wheeled operad} $\mQ\wh(E,R)=Free\wh(E)/<R>$. Note that we don't have a second $S_n$-module $F$, so $Free\wh_w(E)$ consists only of wheeled trees, not outputless trees. 
\end{definition}

\begin{remark} If $R_w=\{0\}$, we can define the ordinary quadratic operad $\mQ(E,R)$. And likewise, if $\mQ(E,R)$ is a quadratic operad, we can define $Q\wh(E,R \oplus \{0\})$, which is exactly its wheeled completion. 
\end{remark}

For the definition of the wheeled quadratic dual, we need an identification of $Free\wh_w(E^\vee)(1)$ and $(Free\wh_w(E))^\vee(1)$. We use the following convention: 
$$ 
 $$
Because of this, it's probably even harder to compute the dimension of $(\mPoissw)_w^!$. 
\end{example}

The definitions of dg-operads, resolutions and the Cobar complex for wheeled operads translate literally from their non-wheeled counterparts: 

\begin{definition} A \textbf{wheeled dg-operad} of degree 1 is a wheeled operad $\mP = \mP_o \oplus \mP_w$ with the following extra structure: 
\begin{enumerate}
\item[i)] $\mP_o$ is a dg-operad; 
\item[ii)] Each $\mP_w(n)$ is a graded vector space, and we denote the $i$th degree of this space by $\mP_w(n)^i$. The composition and contraction extend naturally to maps of graded vector spaces:
$\circ_i: \mP_w(n)^j \otimes \mP_o(m)^k \rightarrow \mP_w(n+m-1)^{j+k}$ and 
$\xi_i: \mP_o(n)^j \rightarrow \mP_w(n-1)^j$. 
\item[iii)] The differental is extended to $\mP_w$, such that it is still of degree 1 and $d^2=0$. It still satisfies (a modified version of) the Leibniz rule:
$$d(\nu\circ_i\mu)=d(\nu)\circ_i\mu+(-1)^j\nu\circ_i d(\mu)$$
where $\nu\in\mP_w(n)^j$. 
\end{enumerate}
\end{definition}

\begin{definition}A resolution of a wheeled operad $\mP$ is a free wheeled dg-operad $\mQ$ such that $H^*(\mQ) \cong \mP$. 
\end{definition}

\begin{definition} We define the \textbf{Cobar complex} of a wheeled operad $\mP$ to be $(Cob\wh(\mP),d)$, with 
$Cob\wh(\mP) = Free\wh(\mP_o(n)^*[-1],\mP_w(m)^* | n \geq 2, m \geq 1) \otimes det(T) \otimes sgn$. We now have two compositions, $\circ_i^\mP$ and $\circ_i^C$ and two contractions, $\xi_i^\mP$ and $\xi_i^C$. Our map $d$ is now an extention of the differential of the normal Cobar complex: it's $(\circ_T)^* + (\xi_T)^*$ where 
$$ \begin{tikzpicture} [scale=1.5]
\draw (0,0) -- (-0.3,-0.2);
\draw (0,0) -- (-0.15,-0.2);
\draw (0,0) -- (0.1,-0.2) to [out=300,in=270] (0.5,-0.2) to [out=90,in=90] (0,0.25) -- (0,-0.00) node [above right= -4pt and -2pt] {\small{$\nu$}};
\path (0.125,-0.25) node [left=-2pt] {\tiny{$i$}};
\draw[scale=0.75] (-0.1,-0.2) -- (0.25,-0.2) [dotted];
\draw (0,0) -- (0.3,-0.2);
\end{tikzpicture} \stackrel{\xi_T}{\longrightarrow} \begin{tikzpicture} [scale=1.5]
\draw (0,0) node [above right= -4pt and -2pt] {\small{$\xi_i^\mP(\nu)$}};
\draw (0,0) -- (-0.3,-0.2);
\draw (0,0) -- (-0.15,-0.2);
\draw (-0.075,-0.15) -- (0.18,-0.15) [dotted];
\draw (0,0) -- (0.3,-0.2);
\end{tikzpicture} $$ 
So $(\xi_T)^*$ also adds an edge to the tree, which we again put at the first place of the wedge product. 
We note two things. Firstly, $\xi_T$ does not act on an element like $$\begin{tikzpicture} [scale=1.5]
\draw (0,0) node [above right= -4pt and -2pt] {\small{$\nu$}};
\draw (0,0) -- (-0.3,-0.2);
\draw (0,0) -- (-0.15,-0.2);
\draw (0,0) -- (0.2,-0.4) node [above right= -4pt and -2pt] {\small{$\mu$}};
\draw (-0.075,-0.15) -- (0.1875,-0.15) [dotted];
\draw (0,0) -- (0.3,-0.2);
\draw (0.2,-0.4) -- (-0.1,-0.6);
\draw (0.2,-0.4) -- (0.05,-0.6);
\draw (0.2,-0.4) -- (0.3,-0.6) to [out=300,in=270] (0.7,-0.6) to [out=90,in=90] (0,0.25) -- (0,-0.00);
\draw (0.125,-0.55) -- (0.3875,-0.55) [dotted];
\draw (0.2,-0.4) -- (0.5,-0.6);
\end{tikzpicture}$$
but $\circ_T$ acts on both edges of this tree (though not at the same time).
Secondly, we note that $d$ is still of degree 1, because we didn't shift the degree of $\mP_w(m)^*$. 
\end{definition} 

\section{Wheeled Koszulness and main problem}

\begin{definition} A wheeled operad $\mP$ is called \textbf{wheeled Koszul} if $H^*(Cob(\mP)) \cong \mP^!$. 
\end{definition}

\begin{example} In \cite{MMS} and \cite{Mer} it was shown that $\mAssw$, $\mComw$ and $\mLiew$ are wheeled Koszul operads. But it wasn't known if this was a general feature of all Koszul operads. 
\end{example}

This leads us to our main question: 

\begin{main} Is the wheeled completion of an ordinary Koszul operad always a wheeled Koszul operad?
\end{main}

As it turns out, the answer to this question is ``no": in the next chapter, it is shown that in the case of the wheeled Poisson operad, a cycle appears that does not live in the highest degree (so the operad is not wheeled quadratic). The cycle is: 

$$\begin{tikzpicture}
\draw (0,0) -- (-0.2,-0.2) node [below=-2pt] {\tiny{1}};
\draw (0,0) -- (0.2,-0.2); 
\draw (0.2,-0.2) -- (0.0,-0.4) node [below=-2pt] {\tiny{2}};
\draw (0.2,-0.2) -- (0.32,-0.32); 
\draw (0.35,-0.35) -- (0.43,-0.43);
\draw (0.43,-0.43) -- (0.23,-0.63) node [below=-2pt] {\tiny{3}}; 
\draw (0.43,-0.43) -- (0.53,-0.53) to [out=315,in=270] (0.78,-0.53) to [out=90,in=90] (0,0.25) -- (0,0);
\fill[white] (0.66,-0.15) circle (1pt);
\draw (0,-0) circle (1pt) [fill=white];
\end{tikzpicture} + \begin{tikzpicture}
\draw (0,0) -- (-0.2,-0.2) node [below=-2pt] {\tiny{1}};
\draw (0,0) -- (0.2,-0.2); 
\draw (0.2,-0.2) -- (0.0,-0.4) node [below=-2pt] {\tiny{3}};
\draw (0.2,-0.2) -- (0.32,-0.32); 
\draw (0.35,-0.35) -- (0.43,-0.43);
\draw (0.43,-0.43) -- (0.23,-0.63) node [below=-2pt] {\tiny{2}}; 
\draw (0.43,-0.43) -- (0.53,-0.53) to [out=315,in=270] (0.78,-0.53) to [out=90,in=90] (0,0.25) -- (0,0);
\fill[white] (0.66,-0.15) circle (1pt);
\draw (0,-0) circle (1pt) [fill=white];
\end{tikzpicture} + \begin{tikzpicture}
\draw (0,0) -- (-0.2,-0.2) node [below=-2pt] {\tiny{2}};
\draw (0,0) -- (0.2,-0.2); 
\draw (0.2,-0.2) -- (0.0,-0.4) node [below=-2pt] {\tiny{3}};
\draw (0.2,-0.2) -- (0.32,-0.32); 
\draw (0.35,-0.35) -- (0.43,-0.43);
\draw (0.43,-0.43) -- (0.23,-0.63) node [below=-2pt] {\tiny{1}}; 
\draw (0.43,-0.43) -- (0.53,-0.53) to [out=315,in=270] (0.78,-0.53) to [out=90,in=90] (0,0.25) -- (0,0);
\fill[white] (0.66,-0.15) circle (1pt);
\draw (0,-0) circle (1pt) [fill=white];
\end{tikzpicture} + \begin{tikzpicture}
\draw (0,0) -- (-0.2,-0.2) node [below=-2pt] {\tiny{2}};
\draw (0,0) -- (0.2,-0.2); 
\draw (0.2,-0.2) -- (0.0,-0.4) node [below=-2pt] {\tiny{1}};
\draw (0.2,-0.2) -- (0.32,-0.32); 
\draw (0.35,-0.35) -- (0.43,-0.43);
\draw (0.43,-0.43) -- (0.23,-0.63) node [below=-2pt] {\tiny{3}}; 
\draw (0.43,-0.43) -- (0.53,-0.53) to [out=315,in=270] (0.78,-0.53) to [out=90,in=90] (0,0.25) -- (0,0);
\fill[white] (0.66,-0.15) circle (1pt);
\draw (0,-0) circle (1pt) [fill=white];
\end{tikzpicture} + \begin{tikzpicture}
\draw (0,0) -- (-0.2,-0.2) node [below=-2pt] {\tiny{3}};
\draw (0,0) -- (0.2,-0.2); 
\draw (0.2,-0.2) -- (0.0,-0.4) node [below=-2pt] {\tiny{1}};
\draw (0.2,-0.2) -- (0.32,-0.32); 
\draw (0.35,-0.35) -- (0.43,-0.43);
\draw (0.43,-0.43) -- (0.23,-0.63) node [below=-2pt] {\tiny{2}}; 
\draw (0.43,-0.43) -- (0.53,-0.53) to [out=315,in=270] (0.78,-0.53) to [out=90,in=90] (0,0.25) -- (0,0);
\fill[white] (0.66,-0.15) circle (1pt);
\draw (0,-0) circle (1pt) [fill=white];
\end{tikzpicture} + \begin{tikzpicture}
\draw (0,0) -- (-0.2,-0.2) node [below=-2pt] {\tiny{3}};
\draw (0,0) -- (0.2,-0.2); 
\draw (0.2,-0.2) -- (0.0,-0.4) node [below=-2pt] {\tiny{2}};
\draw (0.2,-0.2) -- (0.32,-0.32); 
\draw (0.35,-0.35) -- (0.43,-0.43);
\draw (0.43,-0.43) -- (0.23,-0.63) node [below=-2pt] {\tiny{1}}; 
\draw (0.43,-0.43) -- (0.53,-0.53) to [out=315,in=270] (0.78,-0.53) to [out=90,in=90] (0,0.25) -- (0,0);
\fill[white] (0.66,-0.15) circle (1pt);
\draw (0,-0) circle (1pt) [fill=white];
\end{tikzpicture}$$
The notation will be explained in the next chapter.

\chapter{Computations and techniques}

In this chapter, we will compute some of the cohomology of $\mPoissw_w$ in the first few degrees, to check our main question. Before this, we introduce some techniques to ease our computations:

\section{Filtrations}

We start with a proposition that should be very clear:

\begin{prop} If $V$ is a complex and $V = \bigoplus_i V_i$ with $d(V_i) \subset V_i$ for each $i$, $H^*(V) = \bigoplus_i H^*(V_i)$. 
\end{prop}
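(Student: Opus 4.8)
The plan is to observe that the hypothesis $d(V_i) \subset V_i$ makes each $(V_i, d|_{V_i})$ a subcomplex, and that taking cohomology commutes with direct sums. First I would set $d_i := d|_{V_i}$, which is a well-defined differential on $V_i$: it maps $V_i$ into itself by hypothesis, and $d_i^2 = (d^2)|_{V_i} = 0$. The entire argument then reduces to checking that both the cocycles and the coboundaries of $V$ split along the given decomposition, after which the cohomology splits as well.

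For the cocycles, I would write an arbitrary element as $v = \sum_i v_i$ with $v_i \in V_i$ (a finite sum, as always in a direct sum). Since $dv = \sum_i d v_i$ with each $d v_i \in V_i$, the directness of the decomposition gives $dv = 0$ if and only if $d v_i = 0$ for every $i$. Hence $\ker d = \bigoplus_i \ker d_i$. For the coboundaries, $\operatorname{im} d = d\!\left(\bigoplus_i V_i\right) = \sum_i d(V_i)$, and because $d(V_i) \subset V_i$ this sum inherits directness from the $V_i$, so $\operatorname{im} d = \bigoplus_i \operatorname{im} d_i$.

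Finally I would invoke the elementary fact that for families of subspaces $B_i \subseteq A_i \subseteq V_i$ with the $V_i$ in direct sum, one has $\left(\bigoplus_i A_i\right) \big/ \left(\bigoplus_i B_i\right) \cong \bigoplus_i (A_i / B_i)$. Applying this with $A_i = \ker d_i$ and $B_i = \operatorname{im} d_i$ — where $B_i \subseteq A_i$ follows from $d^2 = 0$ — yields $H^*(V) = \ker d / \operatorname{im} d \cong \bigoplus_i (\ker d_i / \operatorname{im} d_i) = \bigoplus_i H^*(V_i)$. Since the complex is graded and $d$ has degree one, every step above can be carried out degree by degree, so the resulting isomorphism respects the grading.

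There is no serious obstacle here; the statement is essentially a bookkeeping exercise, which is presumably why the author calls it ``very clear''. The only point that deserves a moment of care is verifying that the decomposition of the image is genuinely direct. This is precisely where the hypothesis $d(V_i) \subset V_i$ is used rather than the weaker fact that $d$ is merely defined on all of $V$; without it the splitting of $\operatorname{im} d$ could fail, and the conclusion would break down.
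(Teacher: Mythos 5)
Your proof is correct: the paper itself offers no argument at all for this proposition (it is stated as something that ``should be very clear'' and immediately used), so your write-up simply supplies the standard details the author left implicit. The decomposition of both $\ker d$ and $\operatorname{im} d$ along the direct sum, using $d(V_i)\subset V_i$ exactly where you say it is needed, is precisely the intended bookkeeping, and no gap remains.
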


This means that we can split a complex into the parts that have nothing to do with each other, and compute each cohomology seperately, to get the cohomology of the total complex. Our next tool is somewhat weaker, and yields weaker results:

\begin{definition} Given a complex of vector spaces $V$ with differential $d$, a \textbf{filtration} of $V$ is a sequence of subspaces $\{0\}=F_0 \subset F_1 \subset ... \subset F_n = V$ such that $d(F_i) \subset F_i$. This means that the induced differential $d_i: F_i / F_{i-1}$ and 
$H_i:= H^*(F_i / F_{i+1}, d_i)$ are well-defined. We call $i$ the \textbf{level} of a space $F_i$ or an element $x \in F_i$. 
\end{definition}

Some notation: if $\{0\}=F_0 \subset F_1 \subset ... \subset F_n = V$ is a filtration of $V$, we choose our basis 
$\{ e_{1_1}, ..., e_{1_{m_1}}, ..., e_{n_1}, ..., e_{n_{m_n}} \}$ such that $\{e_{1_1}, ..., e_{i_{m_i}} \}$ is a basis for $F_i$. This means that $\{\overline{e}_{i_1}, ..., \overline{e}_{i_{m_i}} \}$ is a basis for $F_i / F_{i-1}$, where $\overline{e}_{i_j} = e_{i_j} + F_{i-1}$, and we now have a canonical embedding $F_i / F_{i-1} \hookrightarrow F_i$. 
Given such a basis and an element $x \in V$, $x_i:= \sum_{j=i_1}^{i_{m_i}} e_j^* (x) e_j$ is the part of $x$ that lies in $F_i$ and not in $F_{i-1}$, and $\overline{x}_i = x_i + F_{i-1} \in F_i / F_{i-1}$. 

\begin{theorem} Let $\{0\}=F_0 \subset F_1 \subset ... \subset F_n = V$ be a filtration of $V$. If $x \in V$, $x \neq 0 \in H^k(V)$, there is an $x'$ and an $i$ such that $x \stackrel{/im}{=} x'$ and $\overline{x'}_i \neq 0 \in H_i^k$. 
\end{theorem}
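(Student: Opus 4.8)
The plan is to prove the statement by descending induction on the \emph{top level} of a cocycle representing the class $[x]$. For a nonzero $y\in V$ write $\ell(y):=\max\{\,i : y_i\neq 0\,\}$ for the highest level occurring in the adapted basis, so that $y\in F_{\ell(y)}$ while $y\notin F_{\ell(y)-1}$. The first observation is that for \emph{any} cocycle $x'$ cohomologous to $x$ one obtains a $d_i$-class for free at $i=\ell(x')$: since $x'\in F_i$, its image $\overline{x'}_i=x'+F_{i-1}\in F_i/F_{i-1}$ is defined, and because $dx'=0$ together with $d(F_{i-1})\subset F_{i-1}$ gives $d_i(\overline{x'}_i)=\overline{dx'}_i=0$, this image is automatically a $d_i$-cocycle and hence represents a class in $H_i^k$.

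Next I would set up the dichotomy that drives the induction. Starting from $x$ (a cocycle, since $x\neq 0\in H^k(V)$), let $i=\ell(x)$. If $[\overline{x}_i]\neq 0\in H_i^k$ we are done, taking $x'=x$. Otherwise $\overline{x}_i=d_i(\bar z)$ for some $z\in F_i^{k-1}$, which unwinds to $x_i-dz\in F_{i-1}$. Replacing $x$ by the cohomologous cocycle $x-dz$ (we subtract a genuine coboundary, so $x\stackrel{/im}{=}x-dz$) kills the level-$i$ part: since $z\in F_i$ and $d(F_i)\subset F_i$, the element $x-dz$ still lies in $F_i$ and has no component above level $i$, while its level-$i$ component is $x_i-(dz)_i=0$ by the relation $x_i-dz\in F_{i-1}$. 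Hence $\ell(x-dz)<i$, unless $x-dz=0$; but the latter would exhibit $x$ as a coboundary, contradicting $[x]\neq 0$.

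Finally I would argue termination. The top level is a positive integer bounded below by $1$ and strictly decreases each time the dichotomy lands in the second case, so after finitely many steps the first case must occur and produces the desired $x'$ and $i$. (At the extreme level $1$ one has $F_1/F_0=F_1$ and $d_1=d|_{F_1}$, so a vanishing class there would again force the representative to be a coboundary, which is impossible.) The only point demanding care --- and the one I would write out in full --- is the bookkeeping in the inductive step: checking, via $d(F_i)\subset F_i$ and the definition of the components $y_i$ through the adapted basis, that subtracting $dz$ with $z\in F_i$ removes the level-$i$ component without creating any higher-level component. Everything else is formal, and indeed this argument is just the bottom rung of the spectral sequence of the filtration made explicit.
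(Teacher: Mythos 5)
Your proof is correct and takes essentially the same route as the paper's: the same dichotomy at the top level $i$ (either the induced class $\overline{x}_i$ is nonzero in $H_i^k$, or one subtracts a coboundary $dz$ with $z\in F_i$ to strictly lower the level), the same descent by induction, and the same termination at level $1$, where a vanishing class would exhibit $x$ as a coboundary of $V$, contradicting $[x]\neq 0$. The only cosmetic difference is bookkeeping: you replace $x$ by $x-dz$ outright, while the paper tracks the leftover term $\tilde{x}_j\in F_{j-1}$ explicitly; the content is identical.
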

\begin{proof} Let $j$ be the level of $x$: $x = \sum_{i=1}^j x_i$. We have $x \neq 0 \in H^k(V)$ so $d(x) = 0$ and $x \notin im(d)$. This means that $d(x_j) \in F_{j-1}$, because $d(x - x_j) = d (\sum_{i=1}^{j-1} x_i) \in F_{j-1}$. If $\overline{x}_j \neq 0 \in H_j$, we're done. If not, $\overline{x}_j = 0 \in H_j$ and $d(\overline{x}_j) = 0 \in F_j / F_{j-1}$ so $\overline{x}_j$ lies in the image of $d_j$: there exists a $y_j \in F_j^{k-1}$ such that $d_j(\overline{y}_j) = \overline{x}_j \in F_j / F_{j-1}$. 

In $F_j$, $d(y_j) = x_j + \tilde{x}_j$ with $\tilde{x}_j \in F_{j-1}$. We see that in $H^k(V)$, modulo the image of $d$, $x = \sum_{i=1}^j x_i \stackrel{/im}{=} \sum_{i=1}^{j-1} x_i - \tilde{x}_j \in F_{j-1}$. So modulo the image, we can lower the level of $x$ by one. 
With induction to $j$, we now see that there is either an $i$ such that $x'_i \neq 0 \in H_i$ (where $x'$ is $x$, modified by an element of the image), or we arrive at the lowest level where $x = d(y) + x'$ with $y = \sum_i y_i$ and $x' \in F_1$. 

If $\overline{x'} = 0 \in H_1^k$, $x'=d(y')$ for a $y' \in F_1^{k-1}$, because $d(x') = d(x) - d(d(y')) = 0$. But this would mean that $x = d(y) + d(y') \in im(d)$, so $x = 0 \in H^K$, which we excluded. We see that $\overline{x'} \neq 0 \in H_1^k$, with $x \stackrel{/im}{=} x'$. 
\end{proof}

The upshot of this theorem is that we only need to look for nontrivial cycles in the cohomology of the filtrations, which are often much easier to compute. However, the proof also shows that we will in general need to modify (i.e. add terms from other levels to) the cycles to get an element from the cohomology of $V$. 
The important corollary is: 

\begin{corollary} \begin{enumerate}
	\item[i)] Two elements $\overline{x} \neq \overline{y} \in H^k$ with $\overline{x}, \overline{y} \neq 0$ have different representants in at least one of the $H_i^k$. 
	\item[ii)] $\sum_i dim(H_i^k) \geq H^k$. 
	\item[iii)] If $H_i^k = 0$ for all $i$, $H^k(V) = 0$. 
\end{enumerate}
\end{corollary}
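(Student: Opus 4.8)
All three parts descend from the preceding theorem, which I read as saying: every nonzero class in $H^k(V)$ has a representative whose highest surviving graded piece is an honest nonzero class in some $H_i^k = H^k(F_i/F_{i-1})$. The plan is to dispatch (iii) first, since it is just the contrapositive. If $H^k(V)$ were nonzero, I would pick $x \neq 0 \in H^k(V)$; the theorem then supplies a level $i$ and a representative $x'$ with $\overline{x'}_i \neq 0 \in H_i^k$, so $H_i^k \neq 0$. Hence, assuming $H_i^k = 0$ for all $i$, no such $x$ can exist, and $H^k(V) = 0$.

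For (i) the trick is to apply the theorem not to $x$ or $y$ separately but to their difference. If $\overline{x} \neq \overline{y}$ are both nonzero, then $\overline{x-y} \neq 0 \in H^k(V)$, so the theorem produces a representative $z'$ of $x-y$ and a level $i$ with $\overline{z'}_i \neq 0 \in H_i^k$. This is exactly the assertion that $\overline{x}$ and $\overline{y}$ cannot be reconciled in every graded piece at once: they are separated inside $H_i^k$. I regard (i) and (iii) as the injectivity and kernel-triviality halves of a single separation statement, both free once the theorem is in hand.

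The substance is (ii), the inequality $\sum_i \dim(H_i^k) \geq \dim H^k(V)$ (I read the right-hand side as $\dim H^k(V)$). Here the plan is to push the filtration of $V$ forward to a filtration of cohomology: set $G_i := \operatorname{im}\big(H^k(F_i) \to H^k(V)\big)$, giving $0 = G_0 \subseteq G_1 \subseteq \cdots \subseteq G_n = H^k(V)$, where the final equality holds because $F_n = V$. Then $\dim H^k(V) = \sum_i \dim(G_i/G_{i-1})$, and it suffices to exhibit, for each $i$, an injection $G_i/G_{i-1} \hookrightarrow H_i^k$. The map is the level-wise refinement already implicit in the theorem's proof: a class in $G_i$ is represented by a cocycle $x' \in F_i$, its reduction $\overline{x'}_i$ in $F_i/F_{i-1}$ is automatically a $d_i$-cocycle (since $d(x') = 0$ lands in $F_{i-1}$ on the next level down), and I would send the class to $[\overline{x'}_i] \in H_i^k$. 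Summing the inequalities $\dim(G_i/G_{i-1}) \leq \dim H_i^k$ over $i$ yields (ii).

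The hard part will be the well-definedness and injectivity of this comparison map in (ii). Concretely, I must check that altering the chosen $F_i$-representative of a class by a global boundary $dw$ (with $w$ possibly of higher level than $i$) changes $\overline{x'}_i$ only by a $d_i$-boundary or by data already recorded in $G_{i-1}$, and that a class which dies in $H_i^k$ can be lowered to level $i-1$. Both of these are precisely the two branches treated in the proof of the theorem, namely the boundary-lowering step used when $\overline{x}_j = 0 \in H_j$. Rather than re-deriving the diagram chase, I would invoke that step directly, so that (ii) becomes bookkeeping once the theorem is granted, while (i) and (iii) require no new ideas at all.
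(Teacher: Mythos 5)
Your treatments of (i) and (iii) are correct and are exactly the paper's: the paper proves (i) by applying the theorem to $x-y$ and then asserts that (ii) and (iii) follow, which matches your difference trick and your contrapositive. The real content of your proposal is (ii), where the paper gives no argument and you try to supply one — but your comparison map has a genuine defect: it is not well-defined. Concretely, let $V$ be spanned by $a$ in degree $k-1$ and $b$ in degree $k$ with $da=b$, filtered by $F_1=\langle b\rangle \subset F_2=V$ (this is a filtration, since $db=0$). Then $H^k(V)=0$, so your $G_1=\operatorname{im}\bigl(H^k(F_1)\to H^k(V)\bigr)$ is zero; yet the zero class of $G_1/G_0$ is represented in $F_1$ both by the cocycle $0$ and by the cocycle $b$, and your recipe assigns to it both $0$ and $[b]\neq 0\in H_1^k=\langle b\rangle$. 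The point is that changing an $F_i$-representative by a global boundary $dw$ with $w$ outside $F_i$ changes $\overline{x'}_i$ by a $d_i$-cocycle that need \emph{not} be a $d_i$-boundary, and this discrepancy is an ambiguity in the target $H_i^k$, so it cannot be absorbed into ``data recorded in $G_{i-1}$''. This is precisely the phenomenon of higher differentials in the spectral sequence, and it is invisible to the theorem.

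Moreover, the step you invoke to close the gap points the wrong way. The boundary-lowering step in the theorem's proof shows: if $[\overline{x'}_i]=0\in H_i^k$ for a cocycle $x'\in F_i$, then the class $[x']$ has a representative in $F_{i-1}$, i.e. lies in $G_{i-1}$. That is the \emph{injectivity} half of your map, and it presupposes the map exists; well-definedness needs the converse implication, which the example above refutes. The repair is to run the comparison in the opposite direction. Let $Z_i\subseteq H_i^k$ be the subspace of classes of the form $[\overline{z}_i]$ with $z\in F_i$ a global cocycle, $dz=0$. The assignment $[\overline{z}_i]\mapsto [z]+G_{i-1}$ is a well-defined surjection from $Z_i$ onto $G_i/G_{i-1}$: if $[\overline{z}_i]=[\overline{z'}_i]$ then $z-z'-du\in F_{i-1}$ for some $u\in F_i$, and $z-z'-du$ is a cocycle in $F_{i-1}$, so $[z]-[z']\in G_{i-1}$; surjectivity is immediate from the definition of $G_i$. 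Hence $\dim\bigl(G_i/G_{i-1}\bigr)\leq\dim Z_i\leq\dim H_i^k$, and summing over $i$ with your observation $\dim H^k(V)=\sum_i\dim\bigl(G_i/G_{i-1}\bigr)$ proves (ii). In short: $G_i/G_{i-1}$ is naturally a subquotient of $H_i^k$, not a subspace, and your dimension count survives only after the arrow is reversed.
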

\begin{proof} For (i), note that if $\overline{x}_i = \overline{y}_i$ for all $i$, $(\overline{x-y})_i = 0 \in H_i^k$ while $\overline{x-y} \neq 0 \in H^k$ which is clearly in contradiction to the theorem. Parts (ii) and (iii) follow. 
\end{proof}

\begin{remark} All of these techniques fall into the broader framework of spectral sequences, but because our complexes are (relatively) simple, we will not expand on it here.
\end{remark}

With our notion of filtrations, we can prove our theorem on the double cobar complex: 

\begin{theorem} For any operad $\mP$, $H^*(Cob(Cob(\mP))) \cong \mP$. 
\end{theorem}
\begin{proof} Recall that our differential was $d = d_1^* + d_2$, and that both $d_1^*$ and $d_2$ remove an internal edge (a ``free edge" from the first cobar complex, inside the bubble): either by contracting it with $d_1^* = \circ_T^\mP$ or making it into an external edge with $d_2$. 

We start by splitting our complex: $d$ doesn't add or remove any inputs, so we just need to check that $H^*(Cob(Cob(\mP)))(n) \cong \mP(n)$. Recall that we really had a bicomplex, looking like this:
$$ \begin{tikzpicture}
  \matrix (m) [matrix of math nodes, row sep=2em,
    column sep=2em]
    {  & ... & ... &  \\
      ... & Cob(Cob(\mP))(n)^{i,j} & Cob(Cob(\mP))(n)^{i+1,j} & ... \\
      ... & Cob(Cob(\mP))(n)^{i,j+1} & Cob(Cob(\mP))(n)^{i+1,j+1} & ... \\ 
       & ... & ... &  \\ };
   {[start chain] \chainin (m-2-1);
   \chainin (m-2-2);
   \chainin (m-2-3) [join={node[above]{$d_1^*$}}];
   \chainin (m-2-4); }
   {[start chain] \chainin (m-3-1);
   \chainin (m-3-2);
   \chainin (m-3-3) [join={node[above]{$d_1^*$}}];
   \chainin (m-3-4); }
   {[start chain] \chainin (m-1-2);
   \chainin (m-2-2);
   \chainin (m-3-2) [join={node[right]{$d_2$}}];
   \chainin (m-4-2); }
   {[start chain] \chainin (m-1-3);
   \chainin (m-2-3);
   \chainin (m-3-3) [join={node[right]{$d_2$}}];
   \chainin (m-4-3); }
\end{tikzpicture} $$
And our generalized degree was a diagonal in this picture. However, our technique of filtrations allows us to filter the diagram in another way: because $d (Cob(Cob(\mP))(n)^{i,j} \subset Cob(Cob(\mP))(n)^{i+1,j} \oplus Cob(Cob(\mP))(n)^{i,j+1}$, $F_l := \bigoplus_{i \geq l, j} (Cob(Cob(\mP))(n)^{i,j}$ is a filtration with $d_2$ as induced differential (we could say $F_l := \bigoplus_{i, j \geq l} (Cob(Cob(\mP))(n)^{i,j}$, but we understand $d_2$ a lot better). So we ``cut" the complex: in our diagram, an $F_l$ consists of all spaces to the right of a vertical line. 

We can now split our (filtered) complex even further: remember that $d_2$ changed the edges of the trees that spanned $Cob(Cob(\mP))$, by turning an edge labelled with 1 into an edge labelled with 2. So the shape of the tree is unchanged, and we can split $F_l$ into different parts, each spanned by one tree. 

Say that this tree has one internal edge. In this case, our (splitted, filtered and then splitted again) complex looks like this: 
$$ \begin{tikzpicture} [scale=1.5]
\draw (0,0.25) -- (0,0) node [above right= -4pt and -2pt] {\small{$\nu$}};
\draw (0,0) -- (-0.3,-0.2);
\draw (0,0) -- (-0.15,-0.2);
\draw (0,0) -- (0.2,-0.4) node [above right= -4pt and -2pt] {\small{$\mu$}};
\path (0.125,-0.25) node [left=-2pt] {\tiny{$i$}};
\draw (-0.075,-0.15) -- (0.1875,-0.15) [dotted];
\draw (0,0) -- (0.3,-0.2);
\draw (0.2,-0.4) -- (-0.1,-0.6);
\draw (0.2,-0.4) -- (0.05,-0.6);
\draw (0.125,-0.55) -- (0.3875,-0.55) [dotted];
\draw (0.2,-0.4) -- (0.5,-0.6);
\end{tikzpicture} \stackrel{d_2}{\longrightarrow} \begin{tikzpicture} [scale=1.5]
\draw (0,0.25) -- (0,0) node [above right= -4pt and -2pt] {\small{$\nu$}};
\draw (0,0) -- (-0.3,-0.2);
\draw (0,0) -- (-0.15,-0.2);
\draw (0,0) -- (0.12,-0.24);
\path (0.125,-0.25) node [left=-2pt] {\tiny{$i$}};
\draw (-0.075,-0.15) -- (0.1875,-0.15) [dotted];
\draw (0,0) -- (0.3,-0.2);
{[xshift=1.0,yshift=-2.0]
\draw (0.12,-0.24) -- (0.2,-0.4) node [above right= -4pt and -2pt] {\small{$\mu$}};
\draw (0.2,-0.4) -- (-0.1,-0.6);
\draw (0.2,-0.4) -- (0.05,-0.6);
\draw (0.125,-0.55) -- (0.3875,-0.55) [dotted];
\draw (0.2,-0.4) -- (0.5,-0.6);
}
\end{tikzpicture}$$
With $\nu, \mu \in \mP$. It is clear that the cohomology of this complex is zero. For larger complexes, let us choose a different notation because we only care for the labels of edges: we denote a tree with two internal edges, the first one labelled with 1 and the second one labelled with 2 by $\{1,2\}$. 
So the complex of a tree with two internal edges looks like this:
$$ \begin{tikzpicture}
  \matrix (m) [matrix of math nodes, row sep=1em,
    column sep=1em]
    {  & \{1,2\} &  \\
      \{1,1\} &   & \{2,2\} \\
       & \{2,1\} &  \\  };
   {[start chain] 
   \chainin (m-2-1);
   \chainin (m-1-2);
   \chainin (m-2-3) [join={node[above]{$-$}}]; }
   {[start chain] 
   \chainin (m-2-1);
   \chainin (m-3-2);
   \chainin (m-2-3) [join={node[above]{$ $}}]; }
\end{tikzpicture} $$
Again, it is clear that the cohomology here is zero. However, we note something else: we can filter the complex by choosing the first edge to be fixed: $F_1 := \{ \{2,1\}, \{2,2\}\}$ and $F_2$ is everything. This is again a filtration ($d_2$ only changes edges from 1 to 2) and $F_1$ and $F_2 / F_1$ look exactly like the first complex we reviewed. This is a general pattern: if we look at a complex $C_m$ of trees with $m$ internal edges, $F_1 := \{ \{2,...\} \}$ and $F_2 := \{ \{1,...\}, \{2,...\} \}$ is a filtration and $F_1 \cong F_2/F_1 \cong C_{m-1}$. So with induction to the number of edges, and because our first complex had trivial cohomology, we see that all complexes spanned by trees with one or more internal edges have trivial cohomology. 

The only elements that we are left with are trees without internal edges. These are our candidates for the cohomology of the whole complex. Note that these trees are our basic trees, labelled with elements from $\mP$ (here we implicitly use our identification of $Free(\mP^*)$ and $Free(\mP)^*$ and of course that $(\mP^*)^* = \mP$). 
Let us look if they lie in the image of $d$. An element \begin{tikzpicture}
\draw (0,0.25) -- (0,-0.00) node [above right= -4pt and -2pt] {\small{$\nu$}};
\draw (0,0) -- (-0.3,-0.2);
\draw (0,0) -- (-0.15,-0.2);
\draw (-0.10,-0.2) -- (0.25,-0.2) [dotted];
\draw (0,0) -- (0.3,-0.2);
\end{tikzpicture} lies in $Cob(Cob(\mP))(n)^{-1,1}$. Because $Cob(Cob(\mP))(n)^{-1,0} = 0$, we only need to look at $Cob(Cob(\mP))(n)^{-2,1}$, which is mapped to $Cob(Cob(\mP))(n)^{-1,1}$ and $Cob(Cob(\mP))(n)^{-2,2}$. So locally, our complex looks like
$$ \begin{tikzpicture}
  \matrix (m) [matrix of math nodes, row sep=2em,
    column sep=3em]
    { \begin{tikzpicture} [scale=1.5]
\draw (0,0.25) -- (0,0) node [above right= -4pt and -2pt] {\small{$\nu$}};
\draw (0,0) -- (-0.3,-0.2);
\draw (0,0) -- (-0.15,-0.2);
\draw (0,0) -- (0.2,-0.4) node [above right= -4pt and -2pt] {\small{$\mu$}};
\path (0.125,-0.25) node [left=-2pt] {\tiny{$i$}};
\draw (-0.075,-0.15) -- (0.1875,-0.15) [dotted];
\draw (0,0) -- (0.3,-0.2);
\draw (0.2,-0.4) -- (-0.1,-0.6);
\draw (0.2,-0.4) -- (0.05,-0.6);
\draw (0.125,-0.55) -- (0.3875,-0.55) [dotted];
\draw (0.2,-0.4) -- (0.5,-0.6);
\end{tikzpicture} & \begin{tikzpicture} [scale=1.5]
\draw (0,0.25) -- (0,-0.00) node [above right= -4pt and -2pt] {\small{$\nu \circ_i^\mP \mu$}};
\draw (0,0) -- (-0.3,-0.2);
\draw (0,0) -- (-0.15,-0.2);
\draw (-0.10,-0.2) -- (0.25,-0.2) [dotted];
\draw (0,0) -- (0.3,-0.2);
\end{tikzpicture} \\
      \begin{tikzpicture} [scale=1.5]
\draw (0,0.25) -- (0,0) node [above right= -4pt and -2pt] {\small{$\nu$}};
\draw (0,0) -- (-0.3,-0.2);
\draw (0,0) -- (-0.15,-0.2);
\draw (0,0) -- (0.12,-0.24);
\path (0.125,-0.25) node [left=-2pt] {\tiny{$i$}};
\draw (-0.075,-0.15) -- (0.1875,-0.15) [dotted];
\draw (0,0) -- (0.3,-0.2);
{[xshift=1.0,yshift=-2.0]
\draw (0.12,-0.24) -- (0.2,-0.4) node [above right= -4pt and -2pt] {\small{$\mu$}};
\draw (0.2,-0.4) -- (-0.1,-0.6);
\draw (0.2,-0.4) -- (0.05,-0.6);
\draw (0.125,-0.55) -- (0.3875,-0.55) [dotted];
\draw (0.2,-0.4) -- (0.5,-0.6);
}
\end{tikzpicture} &  \\ };
   {[start chain] 
   \chainin (m-1-1);
   \chainin (m-1-2) [join={node[above]{$d_1^*$}}]; }
   {[start chain] 
   \chainin (m-1-1);
   \chainin (m-2-1) [join={node[right]{$d_2$}}]; }
\end{tikzpicture} $$
We see that \begin{tikzpicture} [scale=1.5]
\draw (0,0.25) -- (0,-0.00) node [above right= -4pt and -2pt] {\small{$\nu \circ_i^\mP \mu$}};
\draw (0,0) -- (-0.3,-0.2);
\draw (0,0) -- (-0.15,-0.2);
\draw (-0.10,-0.2) -- (0.25,-0.2) [dotted];
\draw (0,0) -- (0.3,-0.2);
\end{tikzpicture} does not lie in the image of $d$ (if it would, we would see $\nu = \mu \circ_i^\mP \rho$ with $\mu \circ_i^{C_2} \rho = 0$ so $\mu, \rho = 0$) and modulo the image, 
$$\begin{tikzpicture} [scale=1.5]
\draw (0,0.25) -- (0,0) node [above right= -4pt and -2pt] {\small{$\nu$}};
\draw (0,0) -- (-0.3,-0.2);
\draw (0,0) -- (-0.15,-0.2);
\draw (0,0) -- (0.12,-0.24);
\path (0.125,-0.25) node [left=-2pt] {\tiny{$i$}};
\draw (-0.075,-0.15) -- (0.1875,-0.15) [dotted];
\draw (0,0) -- (0.3,-0.2);
{[xshift=1.0,yshift=-2.0]
\draw (0.12,-0.24) -- (0.2,-0.4) node [above right= -4pt and -2pt] {\small{$\mu$}};
\draw (0.2,-0.4) -- (-0.1,-0.6);
\draw (0.2,-0.4) -- (0.05,-0.6);
\draw (0.125,-0.55) -- (0.3875,-0.55) [dotted];
\draw (0.2,-0.4) -- (0.5,-0.6);}
\end{tikzpicture} = \begin{tikzpicture} [scale=1.5]
\draw (0,0.25) -- (0,-0.00) node [above right= -4pt and -2pt] {\small{$\nu \circ_i^\mP \mu$}};
\draw (0,0) -- (-0.3,-0.2);
\draw (0,0) -- (-0.15,-0.2);
\draw (-0.10,-0.2) -- (0.25,-0.2) [dotted];
\draw (0,0) -- (0.3,-0.2);
\end{tikzpicture} $$
which is exactly the relation of $\circ^\mP$ and $\circ^{C_2}$ that we want (remember that the composition on $Cob(Cob(\mP))$ is $\circ_i^{C_2}$ and not $\circ_i^\mP$). 

In terms of relations on our operad, we now see that if $\nu \circ_i^\mP \mu = \rho \circ_j^\mP \tau$, 
$$\begin{tikzpicture} [scale=1.5]
\draw (0,0.25) -- (0,0) node [above right= -4pt and -2pt] {\small{$\nu$}};
\draw (0,0) -- (-0.3,-0.2);
\draw (0,0) -- (-0.15,-0.2);
\draw (0,0) -- (0.12,-0.24);
\path (0.125,-0.25) node [left=-2pt] {\tiny{$i$}};
\draw (-0.075,-0.15) -- (0.1875,-0.15) [dotted];
\draw (0,0) -- (0.3,-0.2);
{[xshift=1.0,yshift=-2.0]
\draw (0.12,-0.24) -- (0.2,-0.4) node [above right= -4pt and -2pt] {\small{$\mu$}};
\draw (0.2,-0.4) -- (-0.1,-0.6);
\draw (0.2,-0.4) -- (0.05,-0.6);
\draw (0.125,-0.55) -- (0.3875,-0.55) [dotted];
\draw (0.2,-0.4) -- (0.5,-0.6);}
\end{tikzpicture} = \begin{tikzpicture} [scale=1.5]
\draw (0,0.25) -- (0,-0.00) node [above right= -4pt and -2pt] {\small{$\nu \circ_i^\mP \mu$}};
\draw (0,0) -- (-0.3,-0.2);
\draw (0,0) -- (-0.15,-0.2);
\draw (-0.10,-0.2) -- (0.25,-0.2) [dotted];
\draw (0,0) -- (0.3,-0.2);
\end{tikzpicture} = \begin{tikzpicture} [scale=1.5]
\draw (0,0.25) -- (0,-0.00) node [above right= -4pt and -2pt] {\small{$\rho \circ_j^\mP \tau$}};
\draw (0,0) -- (-0.3,-0.2);
\draw (0,0) -- (-0.15,-0.2);
\draw (-0.10,-0.2) -- (0.25,-0.2) [dotted];
\draw (0,0) -- (0.3,-0.2);
\end{tikzpicture} = \begin{tikzpicture} [scale=1.5]
\draw (0,0.25) -- (0,0) node [above right= -4pt and -2pt] {\small{$\rho$}};
\draw (0,0) -- (-0.3,-0.2);
\draw (0,0) -- (-0.15,-0.2);
\draw (0,0) -- (0.12,-0.24);
\path (0.125,-0.25) node [left=-2pt] {\tiny{$j$}};
\draw (-0.075,-0.15) -- (0.1875,-0.15) [dotted];
\draw (0,0) -- (0.3,-0.2);
{[xshift=1.0,yshift=-2.0]
\draw (0.12,-0.24) -- (0.2,-0.4) node [above right= -4pt and -2pt] {\small{$\tau$}};
\draw (0.2,-0.4) -- (-0.1,-0.6);
\draw (0.2,-0.4) -- (0.05,-0.6);
\draw (0.125,-0.55) -- (0.3875,-0.55) [dotted];
\draw (0.2,-0.4) -- (0.5,-0.6);}
\end{tikzpicture} $$
so all relations on $\mP$ translate into relations on $Cob(Cob(\mP))(n)^{-1,1}$. We indeed see that $H^*(Cob(Cob(\mP)))$ is exactly spanned by (trees decorated with) elements from $\mP$, with composition equal to that of $\mP$ and all relations from $\mP$. So $H^*(Cob(Cob(\mP))) \cong \mP$. 
\end{proof}

\section{Checking the Koszulness of $\mPoissw$
}

We now want to check the wheeled Koszulness of $\mPoissw$ at the first few $n$. For this, we need a suitable basis in terms of which we can compute the cohomology. 

\subsection{A basis for the first few $Cob(\mPoissw)(n)$
}

First, we write down the possible trees of $Cob\wh(\mP)_w(n)$ for the first few $n$ (we already know that $\mPoiss$ is a Koszul operad, so we don't need to check the operadic part of $\mPoissw$). We keep in mind that we should decorate each vertex, and that we are able to switch inputs of each element: if we write $\begin{tikzpicture}
\draw (0,0.25) -- (0,0);
\draw (0,0) -- (-0.5,-0.5);
\draw (0,0) -- (0.2,-0.2);
\draw (0,0) -- (0,-0.2);
\draw (-0.3,-0.3) -- (-0.1,-0.5);
\end{tikzpicture}$, we forget to write $\begin{tikzpicture}
\draw (0,0.25) -- (0,0);
\draw (0,0) -- (-0.2,-0.2);
\draw (0,0) -- (0.2,-0.2);
\draw (0,0) -- (0,-0.3);
\draw (0,-0.3) -- (-0.2,-0.5);
\draw (0,-0.3) -- (0.2,-0.5);
\end{tikzpicture}$ and $\begin{tikzpicture}
\draw (0,0.25) -- (0,0);
\draw (0,0) -- (0.5,-0.5);
\draw (0,0) -- (-0.2,-0.2);
\draw (0,0) -- (0,-0.2);
\draw (0.3,-0.3) -- (0.1,-0.5);
\end{tikzpicture}$. On the other hand, we do pay attention to the wheel: $\begin{tikzpicture}
\draw (0,0) -- (0.3,-0.3) to [out=315,in=270] (0.6,-0.3) to [out=90,in=90] (0,0.25) -- (0,-0.00);
\draw (0,0) -- (-0.2,-0.2);
\draw (0.2,-0.2) -- (0,-0.4);
\end{tikzpicture}$ and $\begin{tikzpicture}
\draw (0,0) -- (0.1,-0.1) to [out=315,in=270] (0.35,-0.1) to [out=90,in=90] (0,0.25) -- (0,-0.00);
\draw (0,0) -- (-0.2,-0.2);
\draw (-0.2,-0.2) -- (-0.4,-0.4);
\draw (-0.2,-0.2) -- (-0.0,-0.4);
\end{tikzpicture}$ do not represent the same elements. So we have the following possible trees: 

\begin{tabular}{r l}
$Cob\wh(\mP)_w(1)^0$ & \begin{tikzpicture}
\draw (0,0) node [shape=circle,draw,fill=black,inner sep=0pt,minimum size=1pt] {} -- (-0.2,-0.2);
\end{tikzpicture} \\
$Cob\wh(\mP)_w(1)^1$ & \begin{tikzpicture}
\draw (0,0) -- (0.1,-0.1) to [out=315,in=270] (0.35,-0.1) to [out=90,in=90] (0,0.25) -- (0,-0.00);
\draw (0,0) -- (-0.2,-0.2);
\end{tikzpicture} \\
\end{tabular}

\begin{tabular}{r l}
$Cob\wh(\mP)_w(2)^0$ & \begin{tikzpicture}
\draw (-0.2,-0.2) -- (0,0) node [shape=circle,draw,fill=black,inner sep=0pt,minimum size=1pt] {} -- (0.2,-0.2);
\end{tikzpicture} \\
$Cob\wh(\mP)_w(2)^1$ & \begin{tikzpicture}
\draw (0,0) node [shape=circle,draw,fill=black,inner sep=0pt,minimum size=1pt] {} -- (-0.2,-0.2);
\draw (-0.2,-0.2) -- (-0.4,-0.4);
\draw (-0.2,-0.2) -- (0,-0.4);
\end{tikzpicture}, \begin{tikzpicture}
\draw (0,0) -- (0.1,-0.1) to [out=315,in=270] (0.35,-0.1) to [out=90,in=90] (0,0.25) -- (0,-0.00);
\draw (0,0) -- (-0.2,-0.2);
\draw (0,0) -- (0,-0.2);
\end{tikzpicture} \\
$Cob\wh(\mP)_w(2)^2$ & \begin{tikzpicture}
\draw (0,0) -- (0.3,-0.3) to [out=315,in=270] (0.6,-0.3) to [out=90,in=90] (0,0.25) -- (0,-0.00);
\draw (0,0) -- (-0.2,-0.2);
\draw (0.2,-0.2) -- (0,-0.4);
\end{tikzpicture}, \begin{tikzpicture}
\draw (0,0) -- (0.1,-0.1) to [out=315,in=270] (0.35,-0.1) to [out=90,in=90] (0,0.25) -- (0,-0.00);
\draw (0,0) -- (-0.2,-0.2);
\draw (-0.2,-0.2) -- (-0.4,-0.4);
\draw (-0.2,-0.2) -- (-0.0,-0.4);
\end{tikzpicture} \\
\end{tabular}

\begin{tabular}{r l}
$Cob\wh(\mP)_w(3)^0$ & \begin{tikzpicture}
\draw (0,0) -- (0,-0.2);
\draw (-0.2,-0.2) -- (0,0) node [shape=circle,draw,fill=black,inner sep=0pt,minimum size=1pt] {} -- (0.2,-0.2);
\end{tikzpicture} \\
$Cob\wh(\mP)_w(3)^1$ & \begin{tikzpicture}
\draw (-0.2,-0.2) -- (0,0) node [shape=circle,draw,fill=black,inner sep=0pt,minimum size=1pt] {} -- (0.2,-0.2);
\draw (-0.2,-0.2) -- (-0.4,-0.4);
\draw (-0.2,-0.2) -- (0,-0.4);
\end{tikzpicture}, \begin{tikzpicture}
\draw (0,0) node [shape=circle,draw,fill=black,inner sep=0pt,minimum size=1pt] {} -- (-0.2,-0.2);
\draw (-0.2,-0.2) -- (-0.4,-0.4);
\draw (-0.2,-0.2) -- (-0.2,-0.4);
\draw (-0.2,-0.2) -- (0,-0.4);
\end{tikzpicture}, \begin{tikzpicture}
\draw (0,0) -- (-0.3,-0.2);
\draw (0,0) -- (-0.1,-0.2);
\draw (0,0) -- (0.1,-0.2);
\draw (0,0) -- (0.15,-0.1) to [out=330,in=270] (0.40,-0.1) to [out=90,in=90] (0,0.25) -- (0,-0.00);
\end{tikzpicture} \\
$Cob\wh(\mP)_w(3)^2$ & \begin{tikzpicture}
\draw (0,0) node [shape=circle,draw,fill=black,inner sep=0pt,minimum size=1pt] {} -- (-0.2,-0.2);
\draw (-0.2,-0.2) -- (-0.4,-0.4);
\draw (-0.2,-0.2) -- (0,-0.4);
\draw (0,-0.4) -- (-0.2,-0.6);
\draw (0,-0.4) -- (0.2,-0.6);
\end{tikzpicture}, \begin{tikzpicture}
\draw (0,0) -- (0.3,-0.3) to [out=315,in=270] (0.6,-0.3) to [out=90,in=90] (0,0.25) -- (0,-0.00);
\draw (0,0) -- (-0.2,-0.2);
\draw (0,0) -- (0,-0.2);
\draw (0.2,-0.2) -- (0,-0.4);
\end{tikzpicture}, \begin{tikzpicture}
\draw (0,0) -- (0.1,-0.1) to [out=315,in=270] (0.35,-0.1) to [out=90,in=90] (0,0.25) -- (0,-0.00);
\draw (0,0) -- (-0.2,-0.2);
\draw (0,0) -- (0,-0.2);
\draw (-0.2,-0.2) -- (-0.4,-0.4);
\draw (-0.2,-0.2) -- (0,-0.4);
\end{tikzpicture}, \begin{tikzpicture}
\draw (0,0) -- (0.1,-0.1) to [out=315,in=270] (0.35,-0.1) to [out=90,in=90] (0,0.25) -- (0,-0.00);
\draw (0,0) -- (-0.2,-0.2);
\draw (-0.2,-0.2) -- (-0.4,-0.4);
\draw (-0.2,-0.2) -- (-0.2,-0.4);
\draw (-0.2,-0.2) -- (0,-0.4);
\end{tikzpicture} \\
$Cob\wh(\mP)_w(3)^3$ & \begin{tikzpicture}
\draw (0,0) -- (0.5,-0.5) to [out=315,in=270] (0.75,-0.5) to [out=90,in=90] (0,0.25) -- (0,-0.00);
\draw (0,0) -- (-0.2,-0.2);
\draw (0.2,-0.2) -- (0,-0.4);
\draw (0.4,-0.4) -- (0.2,-0.6);
\end{tikzpicture}, \begin{tikzpicture}
\draw (0,0) -- (0.40,-0.40) to [out=315,in=270] (0.65,-0.40) to [out=90,in=90] (0,0.25) -- (0,-0.00);
\draw (0,0) -- (-0.3,-0.3) -- (-0.45,-0.45);
\draw (-0.3,-0.3) -- (-0.15,-0.45);
\draw (0.3,-0.3) -- (0.15,-0.45);
\end{tikzpicture}, \begin{tikzpicture}
\draw (0,0) -- (0.1,-0.1) to [out=315,in=270] (0.35,-0.1) to [out=90,in=90] (0,0.25) -- (0,-0.00);
\draw (0,0) -- (-0.2,-0.2);
\draw (-0.2,-0.2) -- (-0.4,-0.4);
\draw (-0.2,-0.2) -- (-0.0,-0.4);
\draw (0,-0.4) -- (-0.2,-0.6);
\draw (0,-0.4) -- (0.2,-0.6);
\end{tikzpicture} \\
\end{tabular}

Next, we need a basis of $\mPoissw(n)$ for the first few $n$. For brevity, instead of labelling our inputs we will write something like

\begin{tabular}{r l}
$\mLie_o(3)$ & $2 \times \begin{tikzpicture}
\draw (0,0.25) -- (0,0);
\draw (0,0) -- (-0.2,-0.2);
\draw (0,0) -- (0.2,-0.2);
\draw (0.2,-0.2) -- (0,-0.4);
\draw (0.2,-0.2) -- (0.4,-0.4);
\end{tikzpicture}$
\end{tabular} \\
which means that $\mLie_o(3)$ is spanned by two elements that look like $\begin{tikzpicture}
\draw (0,0.25) -- (0,0);
\draw (0,0) -- (-0.2,-0.2);
\draw (0,0) -- (0.2,-0.2);
\draw (0.2,-0.2) -- (0,-0.4);
\draw (0.2,-0.2) -- (0.4,-0.4);
\end{tikzpicture}$ (for example, $\begin{tikzpicture}
\draw (0,0.25) -- (0,0);
\draw (0,0) -- (-0.2,-0.2) node [below=-1pt] {\tiny{1}};
\draw (0,0) -- (0.2,-0.2);
\draw (0.2,-0.2) -- (0,-0.4) node [below=-1pt] {\tiny{2}};
\draw (0.2,-0.2) -- (0.4,-0.4) node [below=-1pt] {\tiny{3}};
\end{tikzpicture}$ and $\begin{tikzpicture}
\draw (0,0.25) -- (0,0);
\draw (0,0) -- (-0.2,-0.2) node [below=-1pt] {\tiny{2}};
\draw (0,0) -- (0.2,-0.2);
\draw (0.2,-0.2) -- (0,-0.4) node [below=-1pt] {\tiny{3}};
\draw (0.2,-0.2) -- (0.4,-0.4) node [below=-1pt] {\tiny{1}};
\end{tikzpicture}$). We write $\comtree$ instead of $1 \times \comtree$. 

\begin{tabular}{r l}
$\mPoissw_o(2)$ & \comtree, \tree\\
$\mPoissw_o(3)$ & \begin{tikzpicture}
\draw (0,0.25) -- (0,0);
\draw (0,0) -- (-0.2,-0.2);
\draw (0,0) -- (0,-0.2);
\draw (0,0) -- (0.2,-0.2);
\draw (0,0) circle (1pt) [fill=white];
\end{tikzpicture}, $3 \times \begin{tikzpicture}
\draw (0,0.25) -- (0,0);
\draw (0,0) -- (-0.2,-0.2);
\draw (0,0) -- (0.2,-0.2);
\draw (0.2,-0.2) -- (0,-0.4);
\draw (0.2,-0.2) -- (0.4,-0.4);
\draw (0,0) circle (1pt) [fill=white];
\end{tikzpicture}$, $2 \times \begin{tikzpicture}
\draw (0,0.25) -- (0,0);
\draw (0,0) -- (-0.2,-0.2);
\draw (0,0) -- (0.2,-0.2);
\draw (0.2,-0.2) -- (0,-0.4);
\draw (0.2,-0.2) -- (0.4,-0.4);
\end{tikzpicture}$ \\
$\mPoissw_o(4)$ & \begin{tikzpicture}
\draw (0,0) -- (-0.3,-0.2);
\draw (0,0) -- (-0.1,-0.2);
\draw (0,0) -- (0.1,-0.2);
\draw (0,0) -- (0.15,-0.1) to [out=330,in=270] (0.4,-0.1) to [out=90,in=90] (0,0.25) -- (0,-0.00);
\draw (0,0) circle (1pt) [fill=white];
\end{tikzpicture}, $6 \times \begin{tikzpicture}
\draw (0,0.25) -- (0,0);
\draw (0,0) -- (0.5,-0.5);
\draw (0,0) -- (-0.2,-0.2);
\draw (0,0) -- (0,-0.2);
\draw (0.3,-0.3) -- (0.1,-0.5);
\draw (0,0) circle (1pt) [fill=white];
\end{tikzpicture}$, $8 \times \begin{tikzpicture}
\draw (0,0.25) -- (0,0);
\draw (0,0) -- (0.6,-0.6);
\draw (0,0) -- (-0.2,-0.2);
\draw (0.2,-0.2) -- (0,-0.4);
\draw (0.4,-0.4) -- (0.2,-0.6);
\draw (0,0) circle (1pt) [fill=white];
\end{tikzpicture}$, $3 \times \begin{tikzpicture}
\draw (0,0.25) -- (0,0);
\draw (0,0) -- (0.45,-0.45);
\draw (0,0) -- (-0.3,-0.3) -- (-0.45,-0.45);
\draw (-0.3,-0.3) -- (-0.15,-0.45);
\draw (0.3,-0.3) -- (0.15,-0.45);
\draw (0,0) circle (1pt) [fill=white];
\end{tikzpicture}$, $6 \times \begin{tikzpicture}
\draw (0,0.25) -- (0,0);
\draw (0,0) -- (0.6,-0.6);
\draw (0,0) -- (-0.2,-0.2);
\draw (0.2,-0.2) -- (0,-0.4);
\draw (0.4,-0.4) -- (0.2,-0.6);
\end{tikzpicture}$ \\
\end{tabular}

\begin{tabular}{r l}
$\mPoissw_w(1)$ & \begin{tikzpicture}
\draw (0,0) -- (0.1,-0.1) to [out=315,in=270] (0.35,-0.1) to [out=90,in=90] (0,0.25) -- (0,0);
\draw (0,0) -- (-0.2,-0.2);
\draw (0,0) circle (1pt) [fill=white];
\end{tikzpicture}, \begin{tikzpicture}
\draw (0,0) -- (0.1,-0.1) to [out=315,in=270] (0.35,-0.1) to [out=90,in=90] (0,0.25) -- (0,0);
\draw (0,0) -- (-0.2,-0.2);
\end{tikzpicture} \\
$\mPoissw_w(2)$ & \begin{tikzpicture}
\draw (0,0) -- (0.1,-0.1) to [out=315,in=270] (0.35,-0.1) to [out=90,in=90] (0,0.25) -- (0,0);
\draw (0,0) -- (-0.2,-0.2);
\draw (0,0) -- (0,-0.2);
\draw (0,0) circle (1pt) [fill=white];
\end{tikzpicture}, $2 \times \begin{tikzpicture}
\draw (0,0) -- (0.3,-0.3) to [out=315,in=270] (0.6,-0.3) to [out=90,in=90] (0,0.25) -- (0,0);
\draw (0,0) -- (-0.2,-0.2);
\draw (0.2,-0.2) -- (0,-0.4);
\draw (0,0) circle (1pt) [fill=white];
\end{tikzpicture}$, \begin{tikzpicture}
\draw (0,0) -- (0.3,-0.3) to [out=315,in=270] (0.6,-0.3) to [out=90,in=90] (0,0.25) -- (0,0);
\draw (0,0) -- (-0.2,-0.2);
\draw (0.2,-0.2) -- (0,-0.4);
\end{tikzpicture} \\
$\mPoissw_w(3)$ & \begin{tikzpicture}
\draw (0,0) -- (-0.3,-0.2);
\draw (0,0) -- (-0.1,-0.2);
\draw (0,0) -- (0.1,-0.2);
\draw (0,0) -- (0.15,-0.1) to [out=330,in=270] (0.4,-0.1) to [out=90,in=90] (0,0.25) -- (0,0);
\draw (0,0) circle (1pt) [fill=white];
\end{tikzpicture}, $3 \times \begin{tikzpicture}
\draw (0,0) -- (0.4,-0.4) to [out=315,in=270] (0.65,-0.4) to [out=90,in=90] (0,0.25) -- (0,0);
\draw (0,0) -- (-0.2,-0.2);
\draw (0,0) -- (0,-0.2);
\draw (0.3,-0.3) -- (0.1,-0.5);
\draw (0,0) circle (1pt) [fill=white];
\end{tikzpicture}$, $3 \times \begin{tikzpicture}
\draw (0,0) -- (0.5,-0.5) to [out=315,in=270] (0.75,-0.5) to [out=90,in=90] (0,0.25) -- (0,0);
\draw (0,0) -- (-0.2,-0.2);
\draw (0.2,-0.2) -- (0,-0.4);
\draw (0.4,-0.4) -- (0.2,-0.6);
\draw (0,0) circle (1pt) [fill=white];
\end{tikzpicture}$, \begin{tikzpicture}
\draw (0,0) -- (0.40,-0.40) to [out=315,in=270] (0.65,-0.40) to [out=90,in=90] (0,0.25) -- (0,0);
\draw (0,0) -- (-0.3,-0.3) -- (-0.45,-0.45);
\draw (-0.3,-0.3) -- (-0.15,-0.45);
\draw (0.3,-0.3) -- (0.15,-0.45);
\draw (0,0) circle (1pt) [fill=white];
\end{tikzpicture}, $2 \times \begin{tikzpicture}
\draw (0,0) -- (0.5,-0.5) to [out=315,in=270] (0.75,-0.5) to [out=90,in=90] (0,0.25) -- (0,0);
\draw (0,0) -- (-0.2,-0.2);
\draw (0.2,-0.2) -- (0,-0.4);
\draw (0.4,-0.4) -- (0.2,-0.6);
\end{tikzpicture}$ \\
\end{tabular}

The basis for $\mPoissw_o$ is pretty straightforward. As said, $\mPoissw_w$ is harder, but we already saw in chapter 2 that
$$ \begin{tikzpicture}
\draw (0,0) -- (0.40,-0.40) to [out=315,in=270] (0.65,-0.40) to [out=90,in=90] (0,0.25) -- (0,-0.00);
\draw (0,0) -- (-0.3,-0.3) -- (-0.45,-0.45) node [below=-2pt] {\tiny{1}};
\draw (-0.3,-0.3) -- (-0.15,-0.45) node [below=-2pt] {\tiny{2}};
\draw (0.3,-0.3) -- (0.15,-0.45) node [below=-2pt] {\tiny{3}};
\draw (0,0) circle (1pt) [fill=white];
\end{tikzpicture} = \begin{tikzpicture}
\draw (0,0) -- (0.40,-0.40) to [out=315,in=270] (0.65,-0.40) to [out=90,in=90] (0,0.25) -- (0,-0.00);
\draw (0,0) -- (-0.3,-0.3) -- (-0.45,-0.45) node [below=-2pt] {\tiny{3}};
\draw (-0.3,-0.3) -- (-0.15,-0.45) node [below=-2pt] {\tiny{1}};
\draw (0.3,-0.3) -- (0.15,-0.45) node [below=-2pt] {\tiny{2}};
\draw (0,0) circle (1pt) [fill=white];
\end{tikzpicture} = \begin{tikzpicture}
\draw (0,0) -- (0.40,-0.40) to [out=315,in=270] (0.65,-0.40) to [out=90,in=90] (0,0.25) -- (0,-0.00);
\draw (0,0) -- (-0.3,-0.3) -- (-0.45,-0.45) node [below=-2pt] {\tiny{2}};
\draw (-0.3,-0.3) -- (-0.15,-0.45) node [below=-2pt] {\tiny{3}};
\draw (0.3,-0.3) -- (0.15,-0.45) node [below=-2pt] {\tiny{1}};
\draw (0,0) circle (1pt) [fill=white];
\end{tikzpicture}, \ \
\begin{tikzpicture}
\draw (0,0) -- (0.5,-0.5) to [out=315,in=270] (0.75,-0.5) to [out=90,in=90] (0,0.25) -- (0,-0.00);
\draw (0,0) -- (-0.2,-0.2) node [below=-2pt] {\tiny{1}};
\draw (0.2,-0.2) -- (0,-0.4) node [below=-2pt] {\tiny{3}};
\draw (0.4,-0.4) -- (0.2,-0.6) node [below=-2pt] {\tiny{2}};
\draw (0,0) circle (1pt) [fill=white];
\end{tikzpicture} = \begin{tikzpicture}
\draw (0,0) -- (0.5,-0.5) to [out=315,in=270] (0.75,-0.5) to [out=90,in=90] (0,0.25) -- (0,-0.00);
\draw (0,0) -- (-0.2,-0.2) node [below=-2pt] {\tiny{1}};
\draw (0.2,-0.2) -- (0,-0.4) node [below=-2pt] {\tiny{2}};
\draw (0.4,-0.4) -- (0.2,-0.6) node [below=-2pt] {\tiny{3}};
\draw (0,0) circle (1pt) [fill=white];
\end{tikzpicture} - \begin{tikzpicture}
\draw (0,0) -- (0.40,-0.40) to [out=315,in=270] (0.65,-0.40) to [out=90,in=90] (0,0.25) -- (0,-0.00);
\draw (0,0) -- (-0.3,-0.3) -- (-0.45,-0.45) node [below=-2pt] {\tiny{1}};
\draw (-0.3,-0.3) -- (-0.15,-0.45) node [below=-2pt] {\tiny{2}};
\draw (0.3,-0.3) -- (0.15,-0.45) node [below=-2pt] {\tiny{3}};
\draw (0,0) circle (1pt) [fill=white];
\end{tikzpicture}, \ \
\begin{tikzpicture}
\draw (0,0) -- (0.1,-0.1) to [out=315,in=270] (0.35,-0.1) to [out=90,in=90] (0,0.25) -- (0,-0.00);
\draw (0,0) -- (-0.2,-0.2) node [below left=-5pt and -3pt] {\tiny{1}};
\draw (0,0) -- (0,-0.2);
\draw (-0.2,-0.4) node [below=-2pt] {\tiny{2}} -- (0,-0.2) -- (0.2,-0.4) node [below=-2pt] {\tiny{3}};
\draw (0,0) circle (1pt) [fill=white];
\end{tikzpicture} = 0 $$
The only new information we used above is that in the proof for the last equation, the input labelled with 1 is inconsequential, so we also see that $\begin{tikzpicture}
\draw (0,0) -- (0.1,-0.1) to [out=315,in=270] (0.35,-0.1) to [out=90,in=90] (0,0.25) -- (0,0);
\draw (0,0) -- (-0.2,-0.2);
\draw (-0.2,-0.2) -- (-0.4,-0.4);
\draw (-0.2,-0.2) -- (-0.0,-0.4);
\draw (0,0) circle (1pt) [fill=white];
\end{tikzpicture} = 0$. 

Thirdly (because we want to check whether $H^*(Cob\wh(\mPoissw)) \cong (\mPoissw)^!$) we need a basis for $(\mPoissw)_w^!$. 
We saw that $(\mPoissw)^!$ is $\mPoissw$ with two extra relations: $\begin{tikzpicture}
\draw (0,0) -- (0.1,-0.1) to [out=315,in=270] (0.35,-0.1) to [out=90,in=90] (0,0.25) -- (0,-0.00);
\draw (0,0) -- (-0.2,-0.2);
\draw (0,0) circle (1pt) [fill=white];
\end{tikzpicture} = 0$ and $\begin{tikzpicture}
\draw (0,0) -- (0.1,-0.1) to [out=315,in=270] (0.35,-0.1) to [out=90,in=90] (0,0.25) -- (0,-0.00);
\draw (0,0) -- (-0.2,-0.2);
\end{tikzpicture}=0$. This means that for example 
$$ \begin{tikzpicture}
\draw (0,0) -- (0.1,-0.1) to [out=315,in=270] (0.35,-0.1) to [out=90,in=90] (0,0.25) -- (0,-0.00);
\draw (0,0) -- (-0.2,-0.2);
\draw (-0.2,-0.2) -- (-0.4,-0.4) node [below=-2pt] {\tiny{1}};
\draw (-0.2,-0.2) -- (-0.2,-0.4) node [below=-2pt] {\tiny{2}};
\draw (-0.2,-0.2) -- (-0.0,-0.4) node [below=-2pt] {\tiny{3}};
\draw (-0.2,-0.2) circle (1pt) [fill=white];
\end{tikzpicture} = \begin{tikzpicture}
\draw (0,0) -- (0.4,-0.4) to [out=315,in=270] (0.65,-0.4) to [out=90,in=90] (0,0.25) -- (0,0);
\draw (0,0) -- (-0.2,-0.2) node [below=-2pt] {\tiny{1}};
\draw (0,0) -- (0,-0.2) node [below=-2pt] {\tiny{2}};
\draw (0.3,-0.3) -- (0.1,-0.5) node [below=-2pt] {\tiny{3}};
\draw (0,0) circle (1pt) [fill=white];
\end{tikzpicture} + \begin{tikzpicture}
\draw (0,0) -- (0.4,-0.4) to [out=315,in=270] (0.65,-0.4) to [out=90,in=90] (0,0.25) -- (0,0);
\draw (0,0) -- (-0.2,-0.2) node [below=-2pt] {\tiny{2}};
\draw (0,0) -- (0,-0.2) node [below=-2pt] {\tiny{3}};
\draw (0.3,-0.3) -- (0.1,-0.5) node [below=-2pt] {\tiny{1}};
\draw (0,0) circle (1pt) [fill=white];
\end{tikzpicture} + \begin{tikzpicture}
\draw (0,0) -- (0.4,-0.4) to [out=315,in=270] (0.65,-0.4) to [out=90,in=90] (0,0.25) -- (0,0);
\draw (0,0) -- (-0.2,-0.2) node [below=-2pt] {\tiny{3}};
\draw (0,0) -- (0,-0.2) node [below=-2pt] {\tiny{1}};
\draw (0.3,-0.3) -- (0.1,-0.5) node [below=-2pt] {\tiny{2}};
\draw (0,0) circle (1pt) [fill=white];
\end{tikzpicture} = 0 $$
The basis for the first three $n$ of $(\mPoissw)_w^!(n)$  is: 

\begin{tabular}{r l}
$(\mPoissw)_w^!(1)$ & 0 \\
$(\mPoissw)_w^!(2)$ & \begin{tikzpicture}
\draw (0,0) -- (0.3,-0.3) to [out=315,in=270] (0.6,-0.3) to [out=90,in=90] (0,0.25) -- (0,0);
\draw (0,0) -- (-0.2,-0.2);
\draw (0.2,-0.2) -- (0,-0.4);
\draw (0,0) circle (1pt) [fill=white];
\end{tikzpicture}, \begin{tikzpicture}
\draw (0,0) -- (0.3,-0.3) to [out=315,in=270] (0.6,-0.3) to [out=90,in=90] (0,0.25) -- (0,0);
\draw (0,0) -- (-0.2,-0.2);
\draw (0.2,-0.2) -- (0,-0.4);
\end{tikzpicture} \\
$(\mPoissw)_w^!(3)$ & $2 \times \begin{tikzpicture}
\draw (0,0) -- (0.4,-0.4) to [out=315,in=270] (0.65,-0.4) to [out=90,in=90] (0,0.25) -- (0,0);
\draw (0,0) -- (-0.2,-0.2);
\draw (0,0) -- (0,-0.2);
\draw (0.3,-0.3) -- (0.1,-0.5);
\draw (0,0) circle (1pt) [fill=white];
\end{tikzpicture}$, $3 \times \begin{tikzpicture}
\draw (0,0) -- (0.5,-0.5) to [out=315,in=270] (0.75,-0.5) to [out=90,in=90] (0,0.25) -- (0,0);
\draw (0,0) -- (-0.2,-0.2);
\draw (0.2,-0.2) -- (0,-0.4);
\draw (0.4,-0.4) -- (0.2,-0.6);
\draw (0,0) circle (1pt) [fill=white];
\end{tikzpicture}$, $2 \times \begin{tikzpicture}
\draw (0,0) -- (0.5,-0.5) to [out=315,in=270] (0.75,-0.5) to [out=90,in=90] (0,0.25) -- (0,0);
\draw (0,0) -- (-0.2,-0.2);
\draw (0.2,-0.2) -- (0,-0.4);
\draw (0.4,-0.4) -- (0.2,-0.6);
\end{tikzpicture}$ \\
\end{tabular}

Before we can write down elements of $Cob\wh(\mPoissw)(n)$, we need a new notation: both elements from $\mPoissw$ and elements from $Cob\wh(\mP)$ look like trees, so we need a way to distinguish between their edges. We do this in the same way we handled the double cobar complex: we write elements from $\mPoissw$ as we used to, and we write the free edges from the cobar complex as broken edges. This means that $d$ will ``break" edges: for example, it will map $\begin{tikzpicture}
\draw (0,0.25) -- (0,0);
\draw (0,0) -- (-0.2,-0.2) node [below=-2pt] {\tiny{1}};
\draw (0,0) -- (0,-0.2) node [below=-2pt] {\tiny{2}};
\draw (0,0) -- (0.2,-0.2) node [below=-2pt] {\tiny{3}};
\draw (0,0) circle (1pt) [fill=white];
\end{tikzpicture}$ to 
$\begin{tikzpicture}
\draw (0,0.25) -- (0,0);
\draw (0,0) -- (-0.2,-0.2) node [below=-1pt] {\tiny{1}};
\draw (0,0) -- (0.12,-0.12);
\draw (0,0) circle (1pt) [fill=white];
{[xshift=1.0,yshift=-1.0]
\draw (0.12,-0.12) -- (0.2,-0.2);
\draw (0.2,-0.2) -- (0,-0.4) node [below=-1pt] {\tiny{2}};
\draw (0.2,-0.2) -- (0.4,-0.4) node [below=-1pt] {\tiny{3}};
\draw (0.2,-0.2) circle (1pt) [fill=white]; }
\end{tikzpicture} + \begin{tikzpicture}
\draw (0,0.25) -- (0,0);
\draw (0,0) -- (-0.2,-0.2) node [below=-1pt] {\tiny{2}};
\draw (0,0) -- (0.12,-0.12);
\draw (0,0) circle (1pt) [fill=white];
{[xshift=1.0,yshift=-1.0]
\draw (0.12,-0.12) -- (0.2,-0.2);
\draw (0.2,-0.2) -- (0,-0.4) node [below=-1pt] {\tiny{3}};
\draw (0.2,-0.2) -- (0.4,-0.4) node [below=-1pt] {\tiny{1}};
\draw (0.2,-0.2) circle (1pt) [fill=white]; }
\end{tikzpicture} + \begin{tikzpicture}
\draw (0,0.25) -- (0,0);
\draw (0,0) -- (-0.2,-0.2) node [below=-1pt] {\tiny{3}};
\draw (0,0) -- (0.12,-0.12);
\draw (0,0) circle (1pt) [fill=white];
{[xshift=1.0,yshift=-1.0]
\draw (0.12,-0.12) -- (0.2,-0.2);
\draw (0.2,-0.2) -- (0,-0.4) node [below=-1pt] {\tiny{1}};
\draw (0.2,-0.2) -- (0.4,-0.4) node [below=-1pt] {\tiny{2}};
\draw (0.2,-0.2) circle (1pt) [fill=white]; }
\end{tikzpicture}$. 

With our new notation and the generic elements of $Cob\wh(\mP)_w$, we could now write down a basis for the first few $Cob\wh(\mPoissw)_w(n)$, just by filling in all vertices of $Cob\wh(\mP)_w$ with the appropriate elements of $\mPoiss\wh$. But these spaces get very big very quickly: $Cob\wh(\mPoiss)_w(2)$ is 22-dimensional, and the dimension of $Cob\wh(\mPoiss)_w(3)$ is over 200. So we choose to not write down the complete basis, and focus on the place where Koszulness breaks down. 

\subsection{A non-trivial cycle}

To compute the cohomology of the complex, we first note that the number of commutative and Lie vertices in a tree doesn't change under the relations on $\mPoiss$. So we can split each $Cob\wh(\mPoissw)_w(n)$ into $n+1$ different complexes, the first one with $n$ commutative vertices, the second with $n-1$ commutative vertices and one Lie vertex, and so on. 

Secondly, we will in fact ignore the $\otimes sgn$ in our computations, because it has no impact on the existence of cycles, but will just give us $(\mPoissw)_w$ twisted by the sign representation (if Koszulness holds). 
However, note that this means that $\comtree$ becomes anti-commutative, and should become the ``Lie vertex", just like in the case of $\mCom^!$. So when we for example look at $H_*(Cob\wh(\mPoissw)_w(3))$ with one commutative vertex and two Lie vertices, Koszulness would dictate that this space would be isomorphic to $(\mPoissw)_w^!(3)$ with one Lie vertex and two commutative vertices.

Thirdly, we remark that in the case of vector spaces, the homology and cohomology of a space are equal (where we view the cohomology as the homology of the dual space, with the dual of the differential). So we will in fact look at the homology of the cobar complex, with $d^* = \circ_T + \xi_T$ because this differential is easier to compute. This differential pastes two bubbles, and afterwards we might need to use some of the relations  on $\mPoissw$ to make sure that our new element lies in the basis we have. 

And lastly, we now see that our differential admits a filtration: because we only use the Poisson relation $ \begin{tikzpicture}
\draw (0,0.25) -- (0,0);
\draw (0,0) -- (-0.2,-0.2) node [below=-1pt] {\tiny{1}};
\draw (0,0) -- (0.2,-0.2);
\draw (0.2,-0.2) -- (0,-0.4) node [below=-1pt] {\tiny{2}};
\draw (0.2,-0.2) -- (0.4,-0.4) node [below=-1pt] {\tiny{3}};
\draw (0.2,-0.2) circle (1pt) [fill=white];
\end{tikzpicture} = \begin{tikzpicture}
\draw (0,0.25) -- (0,0);
\draw (0,0) -- (-0.2,-0.2) node [below=-1pt] {\tiny{2}};
\draw (0,0) -- (0.2,-0.2);
\draw (0.2,-0.2) -- (0,-0.4) node [below=-1pt] {\tiny{1}};
\draw (0.2,-0.2) -- (0.4,-0.4) node [below=-1pt] {\tiny{3}};
\draw (0,0) circle (1pt) [fill=white];
\end{tikzpicture} + \begin{tikzpicture}
\draw (0,0.25) -- (0,0);
\draw (0,0) -- (-0.2,-0.2);
\draw (0,0) -- (0.2,-0.2) node [below=-1pt] {\tiny{3}};
\draw (-0.2,-0.2) -- (-0.4,-0.4) node [below=-1pt] {\tiny{1}};
\draw (-0.2,-0.2) -- (0,-0.4) node [below=-1pt] {\tiny{2}};
\draw (0,0) circle (1pt) [fill=white];
\end{tikzpicture} $ in one way (from the left to the right), if the $i$th input of a tree $T$ comes from a Lie vertex, the $i$th input of $d^*(T)$ will never be an input from a commutative vertex (we will call inputs from commutative and Lie vertices commutative inputs and Lie inputs, respectively). 
This means that we can use the following filtration: $F_1$ is spanned by trees with only Lie inputs, $F_{2,i}$ is spanned by trees where all inputs are Lie inputs, except for the $i$th one, which can be both. $F_{3,i,j}$ is spanned by trees where all inputs are Lie inputs but the $i$th and $j$th ones can be both, and so on. Then we indeed see that $d^* (F_{j, i_1, ..., i_{j-1}}) \subset F_{j, i_1, ..., i_{j-1}}$.
In $Cob\wh(\mPoissw)_w(n)$, we define $F_j = \bigoplus_{ \{i_1, ... , i_{j-1} \} \subset \{ 1, ... , n-1\} } F_{j, i_1, ..., i_{j-1}}$. We see that all $F_{j, i_1, ..., i_{j-1}} / F_{j-1}$ are isomorphic, and $H_j := H_*(F_j / F_{j-1}) = \bigoplus_{ \{i_1, ... , i_{j-1} \} \subset \{ 1, ... , n-1\} } H_* (F_{j, i_1, ..., i_{j-1}} / F_{j-1})$. 

We can now start computing the homology of $Cob\wh(\mPoissw)_w$. It turns out that with two inputs, $\mPoissw$ satisfies the Koszul condition. But with $H_*(Cob\wh(\mPoissw)_w(3))$ with one commutative vertex and two Lie vertices, Koszulness breaks down. We will skip the explicit calculation of $d^*$ of all elements, and give the homology of each level of filtration:
 \stackrel{d^*}{\longrightarrow} 0 $$
This element does not lie in the image, as is checked by a quick round of linear algebra, so we have found our Koszulness-breaking cycle.

\chapter*{Populare samenvatting}

In de wiskunde komen we vaak verzamelingen tegen die een binaire operatie (zeg $\circ$) hebben, zoals groepen en algebra's. Dit betekent dat we van twee elementen $a$ en $b$ in de algebra een nieuw element kunnen maken: $a \circ b$. Als je een element $a$ uit de algebra kiest, kan je met $\circ$ een functie van de algebra naar zichzelf maken: $(a \circ ): x \mapsto a \circ x$. Hiermee wordt elk element $a$ een afbeelding op de algebra, met \'{e}\'{e}n ``input", onze $x$. 

Een \textit{operad} breidt dit concept uit: in plaats van \'{e}\'{e}n verzameling, hebben we nu een rij van verzamelingen $\{ \mP(n) | n \geq 1\}$, waar elementen van de eerste verzameling $\mP(1)$ \'{e}\'{e}n input hebben, $\mP(2)$ bestaat uit elementen met twee inputs, et cetera. Dit betekent ook dat $\mP(n)$ $n$ operaties heeft, $\circ_1, ..., \circ_n$, \'{e}\'{e}n voor elke input. We noemen deze operaties \textit{composities}. 
Als $\nu \in \mP(n)$ en $\mu \in \mP(m)$, zal $\nu \circ_i \mu \in \mP(n+m-1)$, omdat je nog $n-1$ iputs over hebt van $\nu$ (allemaal, behalve de $i$de, waar je $\mu$ hebt gestopt) en alle $m$ inputs over hebt van $\mu$. 

Het algebra\"{i}sch opschrijven van dit soort composities wordt al snel verwarrend, vanwege de vele haakjes die je nodig hebt, en het feit dat bij elke compositie het aantel inputs omhoog gaat. Daarom visualiseren we onze elementen en hun composities in boomdiagrammen. Onze compositie ziet er bijvoorbeeld uit als: 
$$ \begin{tikzpicture} [scale=1.5]
\draw (0,0.25) -- (0,0) node [above right= -4pt and -2pt] {\small{$\nu$}};
\draw (0,0) -- (-0.3,-0.2) node [below=-2pt] {\tiny{$1$}};
\draw (0,0) -- (-0.15,-0.2) node [below=-2pt] {\tiny{$2$}};
\draw (0,0) -- (0.15,-0.3) node [below=-1pt] {\tiny{$i$}};
\draw (-0.075,-0.15) -- (0.1875,-0.15) [dotted];
\draw (0,0) -- (0.3,-0.2) node [below=-0.5pt] {\tiny{$n$}};
\end{tikzpicture} \circ_i \begin{tikzpicture} [scale=1.5]
\draw (0,0.25) -- (0,0) node [above right= -4pt and -2pt] {\small{$\mu$}};
\draw (0,0) -- (-0.3,-0.2) node [below=-2pt] {\tiny{$1$}};
\draw (0,0) -- (-0.15,-0.2) node [below=-2pt] {\tiny{$2$}};
\draw (-0.075,-0.15) -- (0.1875,-0.15) [dotted];
\draw (0,0) -- (0.3,-0.2) node [below=-0.5pt] {\tiny{$m$}};
\end{tikzpicture}
=
\begin{tikzpicture} [scale=1.5]
\draw (0,0.25) -- (0,0) node [above right= -4pt and -2pt] {\small{$\nu$}};
\draw (0,0) -- (-0.3,-0.2);
\draw (0,0) -- (-0.15,-0.2);
\draw (0,0) -- (0.2,-0.4) node [above right= -4pt and -2pt] {\small{$\mu$}};
\path (0.125,-0.25) node [left=-2pt] {\tiny{$i$}};
\draw (-0.075,-0.15) -- (0.1875,-0.15) [dotted];
\draw (0,0) -- (0.3,-0.2);
\draw (0.2,-0.4) -- (-0.1,-0.6);
\draw (0.2,-0.4) -- (0.05,-0.6);
\draw (0.125,-0.55) -- (0.3875,-0.55) [dotted];
\draw (0.2,-0.4) -- (0.5,-0.6);
\end{tikzpicture}$$
(onze bomen zijn altijd naar boven gericht: de inputs zitten aan de onderkant)

Met dit idee in ons achterhoofd kunnen we het concept van operaden nog wat uitbreiden, tot operaden met een wiel, waarvan de elementen er bijvoorbeeld zo uit zien: 
$$\begin{tikzpicture} [scale=1.5]
\draw (0,0.25) -- (0,0) node [above right= -4pt and -2pt] {\small{$\nu$}};
\draw (0,0) -- (-0.3,-0.2);
\draw (0,0) -- (-0.15,-0.2);
\draw (0,0) -- (0.2,-0.4) node [above right= -4pt and -2pt] {\small{$\mu$}};
\path (0.125,-0.25) node [left=-2pt] {\tiny{$i$}};
\draw (-0.075,-0.15) -- (0.1875,-0.15) [dotted];
\draw (0,0) -- (0.3,-0.2);
\draw (0.2,-0.4) -- (-0.1,-0.6);
\draw (0.2,-0.4) -- (0.05,-0.6);
\draw (0.2,-0.4) -- (0.3,-0.6) to [out=300,in=270] (0.7,-0.6) to [out=90,in=90] (0,0.25) -- (0,-0.00);
\path (0.325,-0.65) node [left=-2pt] {\tiny{$j$}};
\draw (0.125,-0.55) -- (0.3875,-0.55) [dotted];
\draw (0.2,-0.4) -- (0.5,-0.6);
\end{tikzpicture}$$
Deze operaden-met-wiel hebben een extra eigenschap: je kan een deel van je element, dat aan de onderkant van je wiel zit, ``door het wiel heen halen", naar boven. In het simpelste geval betekent dit:
$$\begin{tikzpicture} [scale=1.5]
\draw (0,0.25) -- (0,0) node [above right= -4pt and -2pt] {\small{$\nu$}};
\draw (0,0) -- (-0.3,-0.2);
\draw (0,0) -- (-0.15,-0.2);
\draw (0,0) -- (0.2,-0.4) node [above right= -4pt and -2pt] {\small{$\mu$}};
\path (0.125,-0.25) node [left=-2pt] {\tiny{$i$}};
\draw (-0.075,-0.15) -- (0.1875,-0.15) [dotted];
\draw (0,0) -- (0.3,-0.2);
\draw (0.2,-0.4) -- (-0.1,-0.6);
\draw (0.2,-0.4) -- (0.05,-0.6);
\draw (0.2,-0.4) -- (0.3,-0.6) to [out=300,in=270] (0.7,-0.6) to [out=90,in=90] (0,0.25) -- (0,-0.00);
\path (0.325,-0.65) node [left=-2pt] {\tiny{$j$}};
\draw (0.125,-0.55) -- (0.3875,-0.55) [dotted];
\draw (0.2,-0.4) -- (0.5,-0.6);
\end{tikzpicture} = \begin{tikzpicture} [scale=1.5]
\draw (0,0.25) -- (0,0) node [above right= -4pt and -2pt] {\small{$\mu$}};
\draw (0,0) -- (-0.3,-0.2);
\draw (0,0) -- (-0.15,-0.2);
\draw (0,0) -- (0.2,-0.4) node [above right= -4pt and -2pt] {\small{$\nu$}};
\path (0.125,-0.25) node [left=-2pt] {\tiny{$j$}};
\draw (-0.075,-0.15) -- (0.1875,-0.15) [dotted];
\draw (0,0) -- (0.3,-0.2);
\draw (0.2,-0.4) -- (-0.1,-0.6);
\draw (0.2,-0.4) -- (0.05,-0.6);
\draw (0.2,-0.4) -- (0.3,-0.6) to [out=300,in=270] (0.7,-0.6) to [out=90,in=90] (0,0.25) -- (0,-0.00);
\path (0.325,-0.65) node [left=-2pt] {\tiny{$i$}};
\draw (0.125,-0.55) -- (0.3875,-0.55) [dotted];
\draw (0.2,-0.4) -- (0.5,-0.6);
\end{tikzpicture}$$
En het blijkt dat operaden met zo'n wiel, vanwege deze eigenschap, niet zomaar alles overnemen van hun neven zonder wiel: zo wordt in deze scriptie laten zien dat de Poisson operad met wiel geen Koszul operad is, terwijl de gewone Poisson operad dat wel is.

\end{document}